\colorlet{lightgray}{black!15}
\tikzset{->-/.style={decoration={
  markings,
  mark=at position .5 with {\arrow{>}}},postaction={decorate}}}
\tikzset{midarrow/.style={decoration={
    markings,
    mark=at position {#1} with {\arrow{>}}},postaction={decorate}}}
\newtheorem{theorem}{Theorem}[section]
\newtheorem{prop}[theorem]{Proposition}
\newtheorem{lemma}[theorem]{Lemma}
\newtheorem{cor}[theorem]{Corollary}
\newtheorem{conj}[theorem]{Conjecture}
\theoremstyle{definition}
\newtheorem{definition}[theorem]{Definition}
\newtheorem{observation}[theorem]{Observation}
\newtheorem{terminology}[theorem]{Terminology}
\newtheorem{remark}[theorem]{Remark}
\newtheorem{example}[theorem]{Example}
\theoremstyle{remark}
\definecolor{orange}{rgb}{.95,0.5,0}
\definecolor{light-gray}{gray}{0.75}
\definecolor{brown}{cmyk}{0, 0.8, 1, 0.6}
\definecolor{plum}{rgb}{.5,0,1}
\DeclareMathOperator{\Link}{\sf Link}
\DeclareMathOperator{\Fin}{\sf Fin}
\DeclareMathOperator{\Vect}{\cV{\sf ect}}
\DeclareMathOperator{\pr}{\mathsf{pr}}
\DeclareMathOperator{\ev}{\mathsf{ev}}
\DeclareMathOperator{\Alg}{\sf Alg}
\DeclareMathOperator{\Psh}{\sf PShv}
\DeclareMathOperator{\Aut}{\sf Aut}
\DeclareMathOperator{\colim}{{\sf colim}}
\DeclareMathOperator{\limit}{{\sf lim}}
\DeclareMathOperator{\Fun}{{\sf Fun}}
\DeclareMathOperator{\Map}{{\sf Map}}
\DeclareMathOperator{\exit}{\sf Exit}
\DeclareMathOperator{\Exit}{\bcE{\sf xit}}
\DeclareMathOperator{\obj}{{\sf obj}}
\DeclareMathOperator{\fCat}{{\sf fCat}}
\DeclareMathOperator{\Cat}{{\sf Cat}}
\DeclareMathOperator{\kEnd}{\it k{\sf End}}
\DeclareMathOperator{\uno}{\mathbbm{1}}
\DeclareMathOperator{\Ar}{{\sf Ar}}
\DeclareMathOperator{\Shv}{\sf Shv}
\DeclareMathOperator{\cShv}{\sf cShv}
\DeclareMathOperator{\op}{\mathsf{op}}
\DeclareMathOperator{\cBun}{{\sf c}\cB\mathsf{un}}
\DeclareMathOperator{\Bun}{\cB\mathsf{un}}
\DeclareMathOperator{\oC}{\ov\sC}
\DeclareMathOperator{\cls}{\mathsf{cls}}
\DeclareMathOperator{\opn}{\mathsf{open}}
\DeclareMathOperator{\emb}{\mathsf{emb}}
\DeclareMathOperator{\cbl}{\mathsf{cbl}}
\DeclareMathOperator{\pcbl}{\mathsf{p.cbl}}
\DeclareMathOperator{\Tang}{\cT{\sf ang}}
\DeclareMathOperator{\fTang}{\fT{\sf ang}}
\DeclareMathOperator{\Mfd}{{\cM}\mathsf{fd}}
\DeclareMathOperator{\cMfd}{{\sf c}{\cM}\mathsf{fd}}
\DeclareMathOperator{\Mfld}{{\cM}\mathsf{fld}}
\DeclareMathOperator{\Emb}{\mathsf{Emb}}
\DeclareMathOperator{\strat}{\mathsf{Strat}}
\DeclareMathOperator{\Strat}{\cS\mathsf{trat}}
\DeclareMathOperator{\spaces}{\cS\mathsf{paces}}
\DeclareMathOperator{\Spaces}{\cS\mathsf{paces}}
\DeclareMathOperator{\Spectra}{\cS\mathsf{pectra}}
\DeclareMathOperator{\Disk}{\cD{\mathsf{isk}}}
\DeclareMathOperator{\cDisk}{{\sf c}\cD{\mathsf{isk}}}
\DeclareMathOperator{\vfr}{\sf vfr}
\DeclareMathOperator{\fr}{\sf fr}
\DeclareMathOperator{\sfr}{\sf sfr}
\DeclareMathOperator{\Bord}{\cB{\sf ord}}
\def\ot{\otimes}
\DeclareMathOperator{\oo}{\infty}
\DeclareMathOperator{\tr}{\triangleright}
\DeclareMathOperator{\tl}{\triangleleft}
\newcommand{\lag}{\langle}
\newcommand{\rag}{\rangle}
\newcommand{\w}{\widetilde}
\newcommand{\ov}{\overline}
\newcommand{\ra}{\rightarrow}
\newcommand{\xra}{\xrightarrow}
\newcommand{\xla}{\xleftarrow}
\def\cB{\mathcal B}\def\cC{\mathcal C}\def\cD{\mathcal D}
\def\cE{\mathcal E}\def\cF{\mathcal F}
\def\cK{\mathcal K}
\def\cM{\mathcal M}
\def\cS{\mathcal S}\def\cT{\mathcal T}
\def\cV{\mathcal V}
\def\DD{\mathbb D}
\def\NN{\mathbb N}
\def\RR{\mathbb R}\def\SS{\mathbb S}
\def\ZZ{\mathbb Z}
\def\sB{\mathsf B}\def\sC{\mathsf C}
\def\sH{\mathsf H}
\def\sO{\mathsf O}
\def\sT{\mathsf T}
\def\bdelta{\mathbf\Delta}
\def\bDelta{\mathbf\Delta}
\def\fB{\frak B}\def\fC{\frak C}
\def\fT{\frak T}
\def\fX{\frak X}
\def\fZ{\frak Z}
\def\bcE{\boldsymbol{\mathcal E}}
\DeclareMathOperator{\btheta}{\boldsymbol{\Theta}}
\DeclareMathOperator{\bTheta}{\boldsymbol{\Theta}}
\DeclareMathOperator{\adj}{{\sf adj}}
\DeclareMathOperator{\smsh}{\wedge}
\begin{document}

\title{The cobordism hypothesis}

\author{David Ayala \& John Francis}

\address{Department of Mathematics\\Montana State University\\Bozeman, MT 59717}
\email{david.ayala@montana.edu}
\address{Department of Mathematics\\Northwestern University\\Evanston, IL 60208}
\email{jnkf@northwestern.edu}

\thanks{This work was supported by the National Science Foundation under award 1508040. Parts of this work were done when the authors were visitors at the Universit\'e Pierre et Marie Curie, the Hausdorff Institute, MSRI, and IMPA}

\begin{abstract}
Assuming a conjecture about factorization homology with adjoints, we prove the cobordism hypothesis, after Baez--Dolan, Costello, Hopkins--Lurie, and Lurie.

\end{abstract}

\keywords{Factorization homology. Topological quantum field theory. The cobordism hypothesis. The tangle hypothesis. Stratified spaces. Higher categories. Adjoints.}

\subjclass[2010]{Primary 57R56. Secondary 57R90, 57N80, 18B30, 18D10.}

\maketitle

\tableofcontents

\section{Introduction}

In this paper, we show how the theory of factorization homology with adjoints implies the cobordism hypothesis. The cobordism hypothesis for topological quantum field theories is an analogue of the Eilenberg--Steenrod axioms for homology: the Eilenberg--Steenrod axioms state that a homology theory is uniquely determined by its value on a point; the cobordism hypothesis states that a topological quantum field theory is uniquely determined by its value on a point. The following table presents analogous elements:

\medskip

\begin{center}
    \begin{tabular}{|p{6cm}|  p{7.5cm} | }
    \hline
        {\bf Homology} & {\bf TQFT}  \\ \hline
    $\Spaces$ & $\Bord_n^{\fr}$  \\ \hline
    Eilenberg--Steenrod axioms: \newline $\ev_\ast \colon \Fun^{\sf exc}(\Spaces,\cV)\simeq \cV~.$ & Cobordism hypothesis: \newline $\ev_\ast\colon \Fun^{\ot}(\Bord_n^{\fr}, \fX) \simeq \obj(\fX)~.$    \\ \hline
        Cellular homology; 
        \newline cell complexes; 
        \newline cellular approximation.
        & 
        Composition of unit/counits; 
        \newline handlebodies; 
        \newline parametrized Morse theory. 
        \\ 
        \hline
       Eilenberg--MacLane homology: 
       \newline $\sH_{\ast}(M;A) \simeq  \underset{k\to \oo} \limit \pi_{\ast+k} (M_+\wedge \sB^k A)~;$ \newline $\SS\simeq \underset{k\to \oo} \limit  \Omega^k S^k~.$  & Factorization homology: \newline 
       $\Fun^{\ot}(\Bord_n^{\fr},\fX) \simeq \underset{k\to \infty}\limit\Fun_\ast(\Tang_{n\subset n+k}^{\fr}, \fB^k\fX)~;$ \newline
       $\Bord_n^{\fr} \simeq \underset{k\to \oo} \limit \Omega^k \fC(\RR^k)~.$  \\ \hline
        Euler characteristic: 
        \newline 
        $\chi(M)\colon \Bbbk  \xra{{\sf e}(M)} \sH^\ast(M;\Bbbk) \xra{\int_M} \Bbbk$
        \newline
        ($M$ closed, $\Bbbk$ a field)
        & 
        Partition function:
        \newline
        $Z_\fX(M)\colon \Bbbk \xra{\rm unit} \displaystyle \int_M \fX  \xra{\rm trace} n{\sf End}_\fX(\Bbbk)$
        \newline
        ($\fX$ a $\Bbbk$-linear $(\oo,n)$-category) \\ \hline
    \end{tabular}
\end{center}

\medskip

Since the 1980s, approaches to constructing topological quantum field theories from categorical data have relied on Smale's theory of handlebodies \cite{smale}, a smooth manifold analogue of cell complex structures in homotopy theory.
The essential goal is the following: 
given a compact $n$-manifold $M$ and a suitable object $x\in \fX$ in a higher category, construct the partition function $Z_x(M)$, which is an $n$-morphism in $\fX$.
The essential approach is this:
\begin{itemize}
\item[~]
Decompose $M$ as a sequence of handle attachments. 
To each handle of index-$k$, associate a $k$-morphism in $\fX$ which is the unit/counit of an adjunction determined from dualizability of the object $x\in \fX$. 
Composing these unit and counits gives $Z_x(M)$. 
\end{itemize}
This is how Reshetikhin \& Turaev constructed invariants of 3-manifolds from representations of quantum groups in \cite{rt2} and \cite{rt1}, and how Baez \& Dolan originally conceived of the cobordism hypothesis. 
See also \cite{craneyetter}, \cite{cranefrenkel}.
This approach is analogous to cellular homology.
There, the goal is this: given a space $M$ and an abelian group $A$, construct the graded abelian group $\sH_\ast(M;A)$, which is the homology of $M$.
The approach of cellular homology is this:
\begin{itemize}
\item[~]
Decompose $M$ as a sequence of cell attachments.
To each $k$-cell, associate the graded abelian group $A[k]$.  
These associations assemble as a chain complex; differential is given by the degrees of attaching maps; its homology is $\sH_\ast(M;A)$.
\end{itemize}
Both of these approaches are well-suited to explicit computation once a manifold/space has been decomposed into handles/cells.
However, it is difficult to show that they are well-defined invariants of $M$, independent of a choice of decomposition. 
To show cellular homology is independent of the choice of cell decomposition requires Whitehead's cellular approximation theorem.\footnote{
         Even showing the Euler characteristic of a manifold is a well-defined invariant is, essentially, as difficult as showing it is computed as the integral of the Euler class.
         The analogy between the Euler characteristic and the partition function is the composition involving factorization homology:        
        \[
        \Bbbk \simeq \displaystyle \int_\emptyset \fX  \xra{\emptyset \hookrightarrow M} \displaystyle \int_M \fX  \simeq \int_{\RR^k\times M} \fB^k \fX  \xra{\RR^k\times M \hookrightarrow \RR^{n+k}} \displaystyle \int_{\RR^{n+k}}\fB^k \fX \simeq \int_{\RR^n}\fX \simeq n{\sf End}_\fX(\Bbbk)~.
        \]
        }
The problem of well-definedness in TQFT is the essential difficulty of Baez--Dolan's cobordism hypothesis. 
This problem becomes harder in an $\oo$-categorical formulation of the cobordism hypothesis, e.g., working at the chain level and insisting that diffeomorphism groups of manifolds act suitably continuously. 
The proposed solution to this $\infty$-categorical phrasing, outlined by Lurie~\cite{lurie.cobordism}, is of this nature.
Accordingly, in this approach to a classification to TQFTs, one must address intricate problems concerning index-filtered parametrized Morse theory:
show that the space of framed Morse functions on a compact manifold is contractible (see~\cite{eliash}); show that this space can be filtered according to index of critical points with tractable filtration layers.  

\medskip

There are other approaches to proving Eilenberg--Steenrod's theorem, aside from cellular homology and cellular approximation. 
One other such approach is to give a definition of homology that is manifestly homotopy-invariant: one such definition is Eilenberg--MacLane homology.
In analogy, one could seek some homology-type construction of the field theory associated to a suitable object $x\in \fX$ in a higher category, that is manifestly well-defined.  
This is an essential goal of our program.

\medskip

The following principle guides this program: 
\begin{itemize}
\item[~]
Field theories can be constructed from the homology of sheaves on moduli spaces of stratifications of manifolds. 
\end{itemize}
In fact, Eilenberg--MacLane homology demonstrates this principle.  
The Eilenberg--MacLane spaces $\sB^k A$ can be organized as an infinite-loop space presentation of the abelian group $A\simeq \Omega^k\sB^k A$.
One can then construct a locally constant cosheaf of infinite-loop spaces the moduli space of points in a space $M$, whose stalk at each point is $A$.
Integrating this cosheaf over this moduli space, implemented by cosheaf homology, results in an infinite-loop space, the homotopy groups of which is the homology $\sH_\ast(M;A)$.

\medskip

Our principle is inspired by the work of Beilinson \& Drinfeld in conformal field theory in~\cite{bd}.  
There, for each algebraic curve $M$, they consider its Ran space---a moduli space of finite subsets in $M$.  
Sheaves on this Ran space can be constructed from chiral or vertex algebras in algebraic geometry, and in the case that $M$ is a framed $n$-manifold from $\cE_n$-algebras in $n$-manifold topology.
Their theory of chiral or factorization homology is then the sheaf homology over this Ran space. 
A special, somewhat degenerate, form of the cobordism hypothesis is provable using this form of factorization homology theory~\cite{oldfact}.  

\medskip

In the case of a smooth 4-manifold $M$, moduli of smoothly embedded surfaces in $M$, not merely finite subsets, is required to access the full geometry of $M$.
Both embedded surfaces and finite subsets of $M$ are simple examples of stratifications of $M$.
Thus, to effectuate our principle we address three questions. 
First, what is the moduli space of stratifications? 
Second, what are sheaves on this moduli space of stratifications?
Third, why are sheaves on this moduli space natural for TQFT?

\medskip

Different approaches are possible. For one, the moduli space of stratifications might be a topological space $\cM_M$, points of which are stratifications of $M$, and $\cM_M$ might then be equipped with a Gromov--Hausdorff topology. Toward the second question, one would then be seek a method for producing sheaves on this moduli space $\cM_M$. 
To construct continuous diffeomorphism invariants of $M$, these sheaves should not be arbitrary.
Namely, $\cM_M$ carries a natural stratification: two stratifications $S_0$ and $S_1$ belong to the same strata if they are smoothly isotopic;
we should then consider only those sheaves on $\cM_M$ which are constructible with respect to this stratification. 
Using the enter-path $\oo$-category associated to the stratification of $\cM_M$, such a $\cV$-valued constructible sheaf $\cC$ is equivalent to the data of a functor
\[
\exit(\cM_M) \overset{\cC}\longrightarrow \cV~.
\]
The resulting sheaf homology, which is the sought invariant of $M$, would then be equivalent to the colimit of this functor:
\[
\sH_\ast(\cM_M;\cC) ~ \simeq ~ \colim \bigl(\exit(\cM_M)\ra{\cC}\cV\bigr)~.
\]
One difficulty is that the topology of this moduli space $\cM_M$ is infinite-dimensional and non-Hausdorff: one must then confront the strangeness of sheaf theory on such topological spaces.

\medskip

Instead of building the topological space $\cM_M$ and then taking its enter-path $\oo$-category, we build the sought $\oo$-category $\exit(\cM_M)^{\op}$ directly, without reference to $\cM_M$.
To do so, we begin with the following question: 
while a point in $\cM_M$ should be a stratification of $M$, 
what should be an enter-path in $\cM_M$? 
That is, what is a stratified map $[0,1]\ra \cM_M$? 
(Here $[0,1]$ carries the asymmetric stratification with strata $\{0\}$ and $(0,1]$.) 
In as much as $\cM_M$ is a moduli space, this is the question of what such a map should classify.
Our answer to this formulation is: a {\it constructible bundle} over $[0,1]$. 
That is, we define an enter-path in the moduli space $\cM_M$ from a stratification $S_0$ of $M$ to a stratification $S_1$ of $M$ to be a stratification $S$ of $M\times [0,1]$ together with identifications
\[
\xymatrix{
(S_0\subset M\times \{0\})   \ar[rr]\ar[d]
&&
(S\subset M\times [0,1])     \ar[d]^-{\rm projection}
&&
(S_1  \subset M\times \{1\})  \ar[d]   \ar[ll]
\\
\{0\}   \ar[rr]
&&
[0,1]
&&
\{1\}   \ar[ll]
}
\]
and such that the restriction $S_{|(0,1]}\ra (0,1]$ splits as a product: $S_{|(0,1]} \cong S_1 \times (0,1]$. 
In the context of conically smooth stratifications, as developed in \cite{aft1}, functorial resolutions of singularities can be used to prove that 
that putative enter-paths can be composed: given two constructible bundles $(S_{01}\subset M\times[0,1]) \ra [0,1]$ and $(S_{12}\subset M\times [1,2])\ra [1,2]$ whose fibers over $\{1\}$ are identified, there exists a constructible bundle $(S\subset M)\ra \Delta^2$ over the standardly stratified 2-simplex (Definition~\ref{def.simplices}), together with identifications
\[
\xymatrix{
(S_{01}\subset M\times [0,1])  \ar[r]\ar[d]&
(S\subset M\times \Delta^2) \ar[d]&(S_{12}\subset M\times [1,2])\ar[d]\ar[l]\\
[0,1]\ar[r]&\Delta^2&\ar[l]
[1,2]  .
}
\]
The restriction $S_{02}:= S_{|[0,2]}\ra [0,2]$ then represents the composition of $S_{01}$ and $S_{12}$.
One must ensure that constructible bundles are closed under base-change and thus that $S_{02}\ra [0,2]$ is again a constructible bundle. 
The existence of base-change, and of this concatenation of constructible bundles, 
comprise the main technical results of~\cite{striat}.

\medskip

This leads to the second question, of how to construct coefficient systems on the moduli space of stratifications. Phrased in terms of enter-paths, such a coefficient system $\cC$ should take a value $\cC(S_0)$, the stalk, at each stratification $S_0$ of $M$; and for each constructible bundle $S\ra \Delta^1$ with fibers $S_0$ and $S_1$, $\cC$ should have a specialization map $\cC(S_0) \ra \cC(S_1)$. This is not a familiar algebraic structure, but one can make it so by introducing the appropriate notion of a framing of a stratified space, a \emph{vari-framing}, in which each stratum carries a framing.
There is an $\oo$-category $\Mfd_n^{\vfr}$ whose objects are vari-framed stratified spaces of dimension at most $n$, and whose morphisms are constructible bundles over $\Delta^1$ with a fiberwise vari-framing. 
One should think of the framed version of the moduli space $\cM_M$ as being captured by the $\oo$-overcategory $\Mfd_{n/M}^{\vfr}$.
There is an $\infty$-subcategory $\cMfd_n^{\vfr} \subset \Mfd_n^{\vfr}$ classifying the proper constructible bundles. 
In this case, $(\oo,n)$-categories define 
sheaves on the moduli space of vari-framed stratifications of each compact framed $n$-manifold $M$.

\begin{theorem}[\cite{emb1a}]
There is a fully faithful functor, factorization homology,
\[
\displaystyle \int \colon 
\Cat_n
\longrightarrow
\Fun(\cMfd_n^{\vfr},\Spaces)
\]
from $(\oo,n)$-categories into space-valued functors on compact vari-framed stratified $n$-manifolds.
\end{theorem}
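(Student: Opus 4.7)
The plan is to build $\int$ as a left Kan extension from a density-theoretic core, and to derive full-faithfulness from the fact that an $(\oo,n)$-category is reconstructed by its values on basic vari-framed disks. I would identify a small full subcategory $\Disk_n^{\vfr} \subset \cMfd_n^{\vfr}$ consisting of vari-framed \emph{basic disks}---the local conically smooth models for vari-framed stratifications of dimension at most $n$---and then define
$$\textstyle\int \fX \;:=\; \lkan_{\Disk_n^{\vfr} \hookrightarrow \cMfd_n^{\vfr}}(\fX), \qquad \int_M \fX \;\simeq\; \colim\bigl((\Disk_n^{\vfr})_{/M} \to \Disk_n^{\vfr} \xrightarrow{\fX} \Spaces\bigr).$$

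The essential ingredient is a \emph{density} claim: the inclusion $\Disk_n^{\vfr} \hookrightarrow \cMfd_n^{\vfr}$ realizes each compact vari-framed stratified $n$-manifold $M$ as the colimit of its under-category of basic disks. Because morphisms in $\cMfd_n^{\vfr}$ are constructible bundles over $\Delta^1$, this colimit must be interpreted in a constructible-bundle sense: the content is that every constructible bundle is built, by iterated gluing over strata, from bundles between basic disks. The second ingredient is to recognize $\Cat_n$ as a full subcategory of $\Fun(\Disk_n^{\vfr},\Spaces)$ via a Segal-type condition: the basic vari-framed disks carry a $\boldsymbol{\Theta}_n$-like pasting combinatorics, so that an $(\oo,n)$-category is precisely a presheaf local with respect to decomposing a pasting diagram into its constituent cells.

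Given these two inputs, full-faithfulness of $\int$ follows formally. By density, the restriction of $\int \fX$ back along $\Disk_n^{\vfr} \hookrightarrow \cMfd_n^{\vfr}$ recovers $\fX$ as an object of $\Cat_n$; and by the left-Kan-extension adjunction, together with this restriction identity, one gets
$$\Map_{\Fun(\cMfd_n^{\vfr},\Spaces)}\bigl(\textstyle\int \fX, \int \fY\bigr) \;\simeq\; \Map_{\Fun(\Disk_n^{\vfr},\Spaces)}\bigl(\fX, (\int \fY)|_{\Disk_n^{\vfr}}\bigr) \;\simeq\; \Map_{\Cat_n}(\fX,\fY).$$

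The main obstacle is the density assertion: proving that an arbitrary $M \in \cMfd_n^{\vfr}$ is a colimit of basic disks \emph{computed in $\cMfd_n^{\vfr}$ itself}, whose morphisms are constructible bundles, rather than merely an open-cover colimit at the underlying stratified-space level. One must exhibit every constructible bundle $(S \subset M \times \Delta^1) \to \Delta^1$ as a colimit of bundles between basic disks, and verify that such a colimit is preserved by the inclusion. The technical tools to execute this are the base-change and concatenation results for constructible bundles from \cite{striat}, applied inductively through the depth filtration of strata; the nontrivial coherence to check is that iterated vari-framed disk covers glue compatibly along arbitrary specialization directions, including those that collapse strata of different dimensions.
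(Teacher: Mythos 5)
Your overall architecture---Kan-extend from a disk subcategory, identify $\Cat_n$ inside disk-presheaves by a locality condition, and deduce full faithfulness from the adjunction---does resemble the construction this paper imports from \cite{emb1a} and \cite{II} (the statement is not proved here; its ingredients are recorded in \S\ref{Preliminaries}). But two things are off. First, your ``density'' lemma is both misattributed and misdirected. The step ``restriction of $\int\fX$ along $\Disk_n^{\vfr}\hookrightarrow\cMfd_n^{\vfr}$ recovers $\fX$'' needs only that the inclusion is fully faithful (the unit ${\rm id}\to i^\ast i_!$ is then an equivalence); no colimit-density statement enters. More importantly, in this theory compact vari-framed manifolds are \emph{not} exhibited as colimits of disks inside $\cMfd_n^{\vfr}$: decompositions are encoded by \emph{closed covers} (Definition~\ref{def.closed-cover}), which are \emph{limit} diagrams whose legs are closed morphisms (reversed cylinders of proper constructible embeddings), and the functors in the image of factorization homology are closed \emph{sheaves}, carrying these limit diagrams to limits of spaces. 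So the ``main obstacle'' you identify---proving a colimit decomposition of arbitrary constructible bundles into bundles between basic disks, preserved by the inclusion---is not the mechanism used, and as stated it is unsubstantiated; the gluing combinatorics live contravariantly in the sheaf condition, not covariantly as a density theorem.

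Second, the step you pass off as a ``recognition''---that $\Cat_n$ sits as a full subcategory of $\Fun(\Disk_n^{\vfr},\Spaces)$ cut out by Segal-type conditions---is exactly where the substance of the theorem lies, and your adjunction computation is empty without it: a priori it identifies mapping spaces in $\Fun(\Disk_n^{\vfr},\Spaces)$, not in $\Cat_n$. What is needed is (i) that the cellular realization $\bTheta_n^{\op}\hookrightarrow \cMfd_n^{\vfr}$, $\lag c_k\rag=\DD^k$, is fully faithful (Theorem~\ref{theta.ff}, a genuinely geometric computation of spaces of constructible bundles between vari-framed disks), and (ii) the equivalence $\fCat_n\simeq \Shv(\bTheta_n)\simeq \cShv(\cDisk_n^{\vfr})$ between Segal sheaves on cells and closed sheaves on disk-stratified manifolds (Theorem~\ref{cats.are.sheaves}); note also that what one gets on the nose is \emph{flagged} $(\infty,n)$-categories, with $\Cat_n$ entering as the full subcategory of univalent ones. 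Relatedly, the functor is not the naive left Kan extension of the cell-indexed presheaf: per Definition~\ref{def.fact.hom} it is right Kan extension onto closed sheaves on $\cDisk_n^{\vfr}$ (the closure of the cells under closed covers, Definition~\ref{def.disk.vfr}) followed by left Kan extension to $\cMfd_n^{\vfr}$. A direct left Kan extension from basic disks would indeed be formally fully faithful, but there is no reason it computes factorization homology (e.g.\ its values need not satisfy the closed-sheaf gluing of Theorem~\ref{on.Rk}), so it would not prove the statement about the functor $\int$.
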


To prove the cobordism hypothesis, it is essential to do two things: one, to mix the morphisms in the moduli space of stratifications with open embeddings in the manifold variable $M$; two, to relax the notion of the framing to accommodate adjoints. We will assume the following; see \S\ref{Preliminaries} for definitions for the terms involved. We have conservatively called the following a conjecture; however, we expect a proof of it to be shortly completed and appear in a forthcoming work.

\begin{conj}\label{conj.one}
Factorization homology with adjoints defines a fully faithful functor
\[
\int: \Cat_n^{\sf adj, \ast/}\hookrightarrow\Fun(\Mfd_n^{\sfr}, \Spaces)
\]
from pointed $(\oo,n)$-categories with adjoints to space-valued functors on solidly $n$-framed stratified manifolds.
This lies in a commutative diagram
\[
\xymatrix{
\Cat_n^{\sf adj, \ast/}\ar[rr]^-{\int}\ar[d]&&\Fun(\Mfd_n^{\sfr}, \Spaces)\ar[d]\\
\Cat_n^{\ast/}\ar[rr]^-\int &&\Fun(\Mfd_n^{\vfr},\Spaces)}
\]
with respect to factorization homology.
\end{conj}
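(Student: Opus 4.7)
The plan is to match the enlargement of the source category (from $\Cat_n^{\ast/}$ to $\Cat_n^{\sf adj,\ast/}$) with the corresponding enlargement of the target category (from functors on $\Mfd_n^{\vfr}$ to functors on $\Mfd_n^{\sfr}$), and to deduce fully faithfulness from the vari-framed theorem stated just above the conjecture.

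First, set up the comparison. A solid $n$-framing of a stratified $n$-manifold restricts on each stratum to a framing of that stratum's tangent bundle, yielding a canonical functor between the manifold categories and hence a natural comparison $c\colon \Fun(\Mfd_n^{\sfr},\Spaces)\to \Fun(\Mfd_n^{\vfr},\Spaces)$ along which the right vertical of the conjecture's square is modelled. The square then commutes by construction, provided the top functor is defined as a canonical lift of the vari-framed factorization homology restricted to $\Cat_n^{\sf adj,\ast/}$. The real work is (i) to construct this lift at the level of $(\oo,n)$-categories with adjoints, and (ii) to prove the resulting functor is fully faithful.

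Second, exhibit the lift. The category $\Mfd_n^{\sfr}$ has more constructible bundles than the underlying vari-framed theory sees: in particular, there are rotations of the normal framing of a stratum that are invisible to the stratumwise tangential framing. I would identify each such generating rotation, locally on basic stratified neighborhoods, with a concrete categorical operation built from units and counits of adjunctions in $\fX$. Iterating across codimensions assembles these operations into a coherent $\Mfd_n^{\sfr}$-theory, whose well-definedness is equivalent to $\fX$ having all adjoints. This step exhibits the categorical content of ``having adjoints'' as exactly the framing content of ``being solidly framed.''

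Third, conclude fully faithfulness. By the vari-framed theorem, natural transformations between $\int\fX$ and $\int\fY$ on $\Mfd_n^{\vfr}$ correspond to functors $\fX\to \fY$ in $\Cat_n^{\ast/}$, and such a functor automatically preserves adjoints. A handle-type decomposition of any solidly framed manifold into standard solidly framed disks, together with the dictionary of Step~2, then propagates this equivalence from vari-framed to solidly framed natural transformations, giving fully faithfulness. The main obstacle is Step~2: proving coherently, at all codimensions, that rotations of solid framings are implemented on factorization homology by iterated units and counits. In codimension one this is essentially the zigzag identity; organising the full $(\oo,n)$-categorical coherence between solid framing moves and iterated adjunction data is substantial, and is precisely the point that the authors defer to the forthcoming work.
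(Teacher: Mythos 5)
There is nothing in the paper for your proposal to match: the statement you are proving is Conjecture~\ref{conj.one}, which the authors explicitly do \emph{not} prove --- they assume it throughout, and in the programmatic overview its proof is deferred to a forthcoming work (the conjectural functor $\ov{\fC}\colon (\Mfd_n^{\sfr})^{\op}\to \Cat_n^{\ast/}$ factoring through $(\Cat_n^{\adj})^{\ast/}$, and the resulting fully faithful $\int\colon (\Cat_n^{\adj})^{\ast/}\to \Fun(\Mfd_n^{\sfr},\Spaces)$). Your write-up does not close that gap either: your Step~2, ``identify each generating rotation of the solid framing with a coherent operation built from units and counits, at all codimensions,'' is precisely the mathematical content of the conjecture, and you acknowledge yourself that organizing this $(\oo,n)$-categorical coherence is the deferred point. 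A proposal whose central step is ``this is the hard part, which is deferred'' is an outline of the conjecture, not a proof of it.

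Two further points would need repair even as an outline. First, your Step~3 argues fully faithfulness by decomposing a solidly framed manifold into standard solidly framed disks by a ``handle-type decomposition'' and propagating the vari-framed equivalence; but descent of $\int\fX$ along such a decomposition on the solidly framed side is not available until one has already constructed factorization homology with adjoints as a closed-sheaf-type theory on $\Mfd_n^{\sfr}$ --- i.e., it presupposes the conjecture --- and, moreover, routing the argument through handle decompositions reintroduces exactly the parametrized Morse-theoretic well-definedness issues (independence of the decomposition, contractibility of spaces of framed Morse data) that the paper's program is explicitly designed to avoid. Second, your claim that a pointed functor $\fX\to\fY$ of the underlying pointed $(\oo,n)$-categories ``automatically preserves adjoints'' is true but does not by itself show that every natural transformation $\int\fX\to\int\fY$ over $\Mfd_n^{\sfr}$ is determined by its restriction to $\Mfd_n^{\vfr}$: the restriction functor $\Fun(\Mfd_n^{\sfr},\Spaces)\to\Fun(\Mfd_n^{\vfr},\Spaces)$ along $\Mfd_n^{\vfr}\to\Mfd_n^{\sfr}$ is not obviously conservative or fully faithful on the relevant image, and establishing the needed comparison is again part of what the forthcoming work must supply.
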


We now address the third, and final, question of how sheaves on this moduli space of stratifications is relevant to TQFT. Namely, Conjecture~\ref{conj.one} implies the cobordism hypothesis, as put forth in \cite{baezdolan} and given precise form in \cite{lurie.cobordism}.

\begin{theorem}\label{bord-hyp}
Assuming Conjecture \ref{conj.one}, for each symmetric monoidal $(\infty,n)$-category $\fX$ with duals and adjoints, evaluation at the object $\ast \in \Bord_n^{\fr}$ defines an equivalence
\[
\ev_\ast\colon \Fun^{\ot}\bigl(\Bord_n^{\fr}, \fX\bigr) \xra{~ \simeq ~}  \obj(\fX)
\]
between the $(\infty,n)$-category of $\fX$-valued symmetric monoidal functors and the space of objects of $\fX$.
\end{theorem}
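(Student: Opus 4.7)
The plan is to leverage Conjecture~\ref{conj.one} by reducing the cobordism hypothesis to a family of ``tangle hypotheses'' indexed by a stabilization parameter $k$, each of which becomes a corepresentability statement for factorization homology that yields to the Yoneda lemma.

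Following the formula highlighted in the Eilenberg--MacLane row of the introductory table, the first step is to establish the stabilization equivalence
$$\Fun^{\ot}(\Bord_n^{\fr}, \fX) \simeq \lim_{k \to \infty} \Fun_\ast(\Tang_{n \subset n+k}^{\fr}, \fB^k \fX),$$
where $\Tang_{n \subset n+k}^{\fr}$ is the pointed $(\infty, n+k)$-category of framed $n$-tangles in $\RR^{n+k}$ and $\fB^k \fX$ is the $k$-fold one-point delooping of $\fX$. This exchanges symmetric monoidality for categorical dimension: the loop structure of $\fB^k$ recovers $\cE_k$-monoidality, and in the limit yields full $\cE_\infty$-structure. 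Concretely, it rests on a Pontryagin--Thom-type analysis realizing $\Bord_n^{\fr}$ as $\lim_k \Omega^k \Tang_{n \subset n+k}^{\fr}$.

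Next, for each fixed $k$, Conjecture~\ref{conj.one} applied to $(\Tang_{n \subset n+k}^{\fr}, \pt)$ and $(\fB^k \fX, \pt)$ as pointed $(\infty, n+k)$-categories with adjoints gives
$$\Fun_\ast(\Tang_{n \subset n+k}^{\fr}, \fB^k \fX) \simeq \Map_{\Fun(\Mfd_{n+k}^{\sfr}, \Spaces)}\Bigl(\int \Tang_{n \subset n+k}^{\fr}, \int \fB^k \fX\Bigr).$$
The key geometric step is to identify $\int \Tang_{n \subset n+k}^{\fr}$ with the functor corepresented by the standard solidly framed point $\pt^{\sfr}_{n+k} \in \Mfd_{n+k}^{\sfr}$; intuitively, framed $n$-tangles in $\RR^{n+k}$ are the free pointed $(\infty, n+k)$-category with adjoints generated by a single $n$-dimensional cell. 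The Yoneda lemma then gives
$$\Map\Bigl(\int \Tang_{n \subset n+k}^{\fr}, \int \fB^k \fX\Bigr) \simeq \int_{\pt^{\sfr}_{n+k}} \fB^k \fX \simeq \obj(\fX),$$
the last equivalence collapsing the $k$-fold delooping against the $k$-dimensional collar of the solid framing. Compatibility with the evaluation map identifies this chain with $\ev_\ast$, and the limit in $k$ is then vacuous.

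The principal obstacle is the corepresentability identification of $\int \Tang_{n \subset n+k}^{\fr}$. Geometrically, this demands that the moduli space of labelings of a compact solidly framed stratified $(n+k)$-manifold by framed $n$-tangles be contractible. Classically, this is the province of delicate parametrized Morse theory---contractibility of framed Morse function spaces, filtration of the moduli by Morse index, and handle cancellation (compare \cite{eliash})---but in the present framework it should follow more cleanly from the base-change and concatenation theorems for constructible bundles of \cite{striat}, together with a direct description of $\Tang_{n \subset n+k}^{\fr}$ as the universal such category generated by an $n$-cell.
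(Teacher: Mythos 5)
Your overall architecture is the paper's: define/realize $\Bord_n^{\fr}$ through the tower of tangle categories, reduce $\Fun^{\ot}(\Bord_n^{\fr},\fX)$ to a limit of tangle-hypothesis statements in the deloopings $\fB^k\fX$, apply Conjecture~\ref{conj.one} to transport the question into $\Fun(\Mfd_{n+k}^{\sfr},\Spaces)$, and finish by corepresentability plus Yoneda (in the paper this is Theorem~\ref{thm.tang}, Corollaries~\ref{cor.univ.tang} and~\ref{cor.ek.tang}, and Lemma~\ref{spectra}; note the paper simply \emph{defines} $\Bord_n^{\fr}$ as $\colim_k \Omega^k\Tang^{\fr}_{n\subset n+k}$, so no Pontryagin--Thom input is needed, and the colimit/limit bookkeeping is handled by the delooping Lemma~\ref{spectra}).

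However, your key step misidentifies the corepresenting object, and this is a genuine gap. The functor $\Tang_{n\subset n+k}^{\fr}\colon \Mfd_{n+k}^{\sfr}\to\Spaces$ is corepresented by $\RR^k$ with its standard solid $(n{+}k)$-framing (Lemma~\ref{lemma.corep}), not by the solidly framed point. A morphism out of $\pt$ factors as a closed-creation (the reversed cylinder on a proper constructible bundle over $\pt$, i.e.\ a compact $W$) followed by an embedding, so $\pt$ corepresents compact codimension-$0$ subobjects; it is precisely the closed-creation morphisms out of $\RR^k$, being reversed cylinders on $\RR^k\leftarrow\RR^k\times W$, that produce codimension-$k$ tangles (Remark~\ref{rem.clscrtEuc}). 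Moreover your concluding evaluation $\int_{\pt}\fB^k\fX\simeq\obj(\fX)$ is false: by Theorem~\ref{on.Rk} the value over a point is the space of objects of the coefficient category, and $\obj(\fB^k\fX)\simeq\ast$ for $k>0$, so your chain would collapse to a point. The correct mechanism is $\int_{\RR^k}\fB^k\fX\simeq\kEnd_{\fB^k\fX}(\uno)\simeq\obj(\Omega^k\fB^k\fX)\simeq\obj(\fX)$; relatedly, the generator of $\Tang^{\fr}_{n\subset n+k}$ is the $k$-morphism $(\{0\}\subset\RR^k)$, not an $n$-cell. Finally, the corepresentability itself is not obtained from base-change and concatenation plus a universal-property description: closed-creation and open morphisms do \emph{not} form a factorization system in $\Bun$ in general (Remark~\ref{rem.nofact}); what saves the day is Lemma~\ref{tau.factor}, a uniqueness-of-factorization statement (closed-creation followed by embedding) proved via the link construction, using crucially that $\RR^k$ is a smooth, trivially stratified manifold and that the solid-framing tangential structure is stable. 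Any correct write-up must include this factorization argument, or an equivalent substitute, for the specific object $\RR^k$.
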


The cobordism hypothesis is a limiting case of the tangle hypothesis, and this is how we prove it. We prove the following form of the tangle hypothesis.

\begin{theorem}\label{tangle.hyp}
Assuming Conjecture \ref{conj.one}, for each pointed $(\infty,n+k)$-category $\uno\in\cC$ with adjoints, evaluation at the 
$k$-endomorphism
$(\{0\}\subset\RR^k) \in \Tang_{n\subset n+k}^{\fr}$ defines an equivalence
\[
\ev_{\RR^k}\colon \Fun_\ast\bigl(\Tang_{n\subset n+k}^{\fr}, \cC\bigr) 
\xra{~\simeq~}
\kEnd_{\cC}(\uno)
\]
between the $(\infty,n+k)$-category of $\cC$-valued pointed functors and the space of $k$-endomorphisms in $\cC$ of its distinguished object $\uno\in \cC$.

\end{theorem}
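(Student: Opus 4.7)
The strategy is to identify $\Tang_{n\subset n+k}^{\fr}$ with the free pointed $(\infty,n+k)$-category with adjoints on a single $k$-endomorphism of its distinguished object. Granting this identification, the theorem is immediate: if $\cF_k^{\sf adj}\in \Cat_{n+k}^{\sf adj,\ast/}$ denotes this free object, then the free--forgetful adjunction supplies a natural equivalence
\[
\Fun_\ast\bigl(\cF_k^{\sf adj},\,\cC\bigr) \;\simeq\; \kEnd_\cC(\uno),
\]
so the theorem follows by substituting the claimed equivalence $\Tang_{n\subset n+k}^{\fr}\simeq \cF_k^{\sf adj}$ on the left.

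To produce this equivalence, classifying the distinguished $k$-endomorphism $(\{0\}\subset \RR^k)$ in $\Tang_{n\subset n+k}^{\fr}$ yields a canonical comparison morphism $\phi\colon\cF_k^{\sf adj}\to\Tang_{n\subset n+k}^{\fr}$ in $\Cat_{n+k}^{\sf adj,\ast/}$. By Conjecture~\ref{conj.one}, factorization homology $\int\colon \Cat_{n+k}^{\sf adj,\ast/}\hookrightarrow \Fun(\Mfd_{n+k}^{\sfr},\Spaces)$ is fully faithful, so it suffices to show that $\int\phi$ is an equivalence of space-valued functors on solidly $n$-framed stratified $(n+k)$-manifolds. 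I would attack this by identifying both factorization homologies of a solidly $n$-framed manifold $M$ as moduli of geometric data in $M$: the universal property of $\cF_k^{\sf adj}$, together with the excision properties of factorization homology, identifies $\int_M \cF_k^{\sf adj}$ with the free construction on solidly $n$-framed embeddings of the $k$-dimensional generator into $M$; and by its construction, $\int_M \Tang_{n\subset n+k}^{\fr}$ is the moduli of stratifications of $M$ whose strata are framed $n$-manifolds $N\subset M$ endowed with a compatibly framed $k$-dimensional normal bundle. The $k$-normal framing exhibits such an embedded tangle $N\subset M$ as a family of solidly $n$-framed embeddings $\RR^k\hookrightarrow M$ parametrized by $N$, which matches the two descriptions pointwise.

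The main obstacle is promoting this pointwise identification to an equivalence compatible with the full $\infty$-categorical structure. One must verify that the specialization maps arising from concatenation of constructible bundles, as sketched in the introduction, translate under the solidly $n$-framed structure precisely into the composition of framed embeddings of $\RR^k$ coming from the universal property of $\cF_k^{\sf adj}$. This amounts to matching $\Disk$-algebra structures on both sides, functorially in $M\in \Mfd_{n+k}^{\sfr}$, which is an essentially geometric exercise in the formalism of stratified moduli developed in the preliminaries, but one whose execution requires careful control over the enter-path $\infty$-category of the moduli space of stratifications.
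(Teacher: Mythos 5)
Your proposal replaces the paper's central geometric lemma with an appeal to free objects, and in doing so relocates rather than resolves the essential difficulty.

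The paper's route is much more direct than yours. It never mentions a free pointed $(\infty,n+k)$-category with adjoints $\cF_k^{\sf adj}$. Instead, the key step is Lemma~\ref{lemma.corep}: the copresheaf $\Tang_{n\subset n+k}^{\fr}\colon\Mfd_{n+k}^{\sfr}\to\Spaces$ is \emph{corepresented} by the object $\RR^k\in\Mfd_{n+k}^{\sfr}$. That corepresentability follows from the purely geometric Lemma~\ref{tau.factor}, which shows that for $M$ a smooth manifold, every morphism $M\to D$ in $\Bun^\tau$ factors uniquely as a closed-creation morphism followed by an embedding --- the link construction provides such a factorization and the uniqueness is argued by a careful diagram chase involving iterated links. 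Once corepresentability is in hand, the theorem follows from fully faithfulness of $\int$ (the Conjecture), Yoneda, and the known formula $\int_{\RR^k}\cC\simeq\kEnd_\cC(\uno)$ of Theorem~\ref{on.Rk}; the enriched upgrade from $\Map^{\ast/}$ to $\Fun_\ast$ is handled separately via a smash-product argument (Lemma~\ref{lem.smshcell}).

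Your proposal has two gaps. First, your plan to show $\int\phi$ is an equivalence requires independent descriptions of both $\int_M \cF_k^{\sf adj}$ and $\int_M \Tang_{n\subset n+k}^{\fr}$, but neither of your descriptions is justified. The claim that ``the universal property of $\cF_k^{\sf adj}$, together with the excision properties of factorization homology, identifies $\int_M\cF_k^{\sf adj}$'' with an explicit moduli is exactly the kind of statement the tangle hypothesis itself asserts: the factorization homology of the free-with-adjoints object is precisely the thing you're trying to compute, so you cannot assume you know it. Likewise, the assertion that the factorization homology of $\Tang_{n\subset n+k}^{\fr}$, regarded as a higher category, recovers the copresheaf $\Tang_{n\subset n+k}^{\fr}$ evaluated on $M$ is nontrivial and sits downstream of the very corepresentability fact you would need; without Lemma~\ref{tau.factor}, you have no handle on the object map in the factorization system of $\Mfd_{n+k}^{\sfr}$ needed to execute your ``pointwise matching.'' Your final paragraph simply records that the hard part remains, without providing the link-factorization argument that actually discharges it. Second, the theorem statement asks for an equivalence of $(\infty,n+k)$-categories $\Fun_\ast(\Tang_{n\subset n+k}^{\fr},\cC)\simeq\kEnd_\cC(\uno)$, the target being a space; you would need an argument that the functor $(\infty,n+k)$-category on the left is groupoidal. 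A free-forgetful adjunction gives you the mapping \emph{space}, not the internal hom; the paper settles this by showing $c_{k/\partial c_k}\smsh(c_i)_\ast\to c_{k/\partial c_k}$ is an equivalence (Lemma~\ref{lem.smshcell}), a step absent from your sketch.
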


{\Small
\subsection{Notation}

The following are our primary notational objects.
\begin{itemize}
\item $\Cat$ is the $\oo$-category of $\oo$-categories (i.e., of $(\oo,1)$-categories).
\item $\Cat_n$ is the $\oo$-category of $(\oo,n)$-categories.
\item $c_k \in \Cat_n$ is the $k$-cell, which corepresents $k$-morphisms. 
\item $\Cat_n^{\adj}$ is the $\oo$-category of $(\oo,n)$-categories with adjoints for $k$-morphisms, $0<k<n$.
\item $\fCat_n$ is the $\oo$-category of flagged $(\oo,n)$-categories.
\item $\kEnd_{\cC}(c)$ is the space of $k$-endomorphisms in $\cC$ of an object $c\in\cC$.
\item $\fB^k \colon \Alg_{\cE_k}(\Cat_n)\ra \Cat_{n+k}^{\ast/}$ is the $n$-fold deloop of an $\cE_k$-monoidal $(\oo,n)$-category.
\item $\Omega^k\colon  \Cat_{n+k}^{\ast/}\ra \Alg_{\cE_k}(\Cat_n)$ is the $n$-fold loops of a pointed $(\oo,n+k)$-category.
\item $\Bord_n^{\fr}$ is the symmetric monoidal flagged $(\oo,n)$-category of $n$-framed cobordisms.
\item $\Tang_{n\subset n+k}^{\fr}$ is the pointed flagged $(\oo,n+k)$-category of codimension-$k$ framed tangles.
\item $\fTang_{n\subset n+k}^{\fr}$ is the $\oo$-category of codimension-$k$ framed tangles.
\item $[p] =\{0<1<\ldots<p\}$ is the totally ordered set with $p+1$ elements.
\item $\bdelta$ is the simplex category.
\item $\btheta_n= \bdelta^{\wr n}$ is Joyal's category, which generates $\Cat_n$ by \cite{rezk-n}.
\item $\Delta^p$ is the standardly stratified $p$-simplex.
\item $\Strat$ is the $\oo$-category of stratified spaces and spaces of stratified maps among them.
\item $\exit(X)$ is the exit-path $\oo$-category of a stratified space $X$; its $p$-simplices are stratified maps from $\Delta^p$ to $X$.
\item $\Bun$ is the $\oo$-category whose $p$-simplices are constructible bundles over $\Delta^p$.
\item $\Mfd_n^{\vfr}$ is the $\oo$-category whose $p$-simplices are constructible bundles over $\Delta^p$ equipped with a fiberwise vari-framing and fiber dimension bounded by $n$.
\item $\cMfd_n^{\vfr}$ is the $\oo$-category whose $p$-simplices are proper constructible bundles over $\Delta^p$ equipped with a fiberwise vari-framing and fiber dimension bounded by $n$.
\item $\Mfd_n^{\sfr}$ is the $\oo$-category whose $p$-simplices are constructible bundles over $\Delta^p$ equipped with a fiberwise solid $n$-framing.
\item $\cMfd_n^{\sfr}$ is the $\oo$-category whose $p$-simplices are proper constructible bundles over $\Delta^p$ equipped with a fiberwise solid $n$-framing.
\item $\displaystyle\int_M \cC$ is the factorization homology over $M$ with coefficients in $\cC$.
\end{itemize}

}

\subsection*{Acknowledgements} 
It is our pleasure to acknowledge our great intellectual debt to Jacob Lurie.

 \subsection{Programmatic overview}
This work is part of a larger program, currently in progress. We now outline a part of this program, in order of logical dependency.
\begin{enumerate}

\item[\bf \cite{aft1}:] {\bf Local structures on stratified spaces} establishes a theory of stratified spaces based on the notion of conical smoothness.
This theory is tailored for the present program, and intended neither to supplant or even address outstanding theories of stratified spaces.
This theory of conically smooth stratified spaces and their moduli is, in effect, defined exactly so that it is closed under the basic operations of taking products, open cones of compact objects, restricting to open subspaces, and forming open covers, and it features a notion of \emph{derivative} which, in particular, gives the following: 
\begin{itemize}
\item[~] For the open cone $\sC(L)$ on a compact stratified space $L$, taking the derivative at the cone-point implements a homotopy equivalence between \emph{spaces} of conically smooth automorphisms
\[
\Aut\bigl(\sC(L)\bigr)~\simeq~\Aut(L)~.
\] 
\end{itemize}
This notion of derivative implies the existence of functorial resolutions singularities, by which we mean (spherical) blow-ups along closed unions of strata.

This work also introduces the notion of a constructible bundle, along with other classes of maps between stratified spaces.

\item[\bf \cite{striat}:]  {\bf A stratified homotopy hypothesis} establishes stratified spaces as parametrizing objects for $\infty$-categories.
Specifically, we construct a functor $\exit\colon \strat \to \Cat$ and show that the resulting restricted Yoneda functor $\Cat \to \Psh(\strat)$ is fully faithful.
The image is characterized by specific geometric descent conditions.
We call these presheaves \emph{striation sheaves}. 
We develop this theory so as to construct particular examples of $\infty$-categories by hand from stratified geometry: $\Bun$, $\Exit$, and variations thereof. 
As striation sheaves, $\Bun$ classifies constructible bundles, $\Bun \colon  K\mapsto \{X\xra{\sf cbl} K\}$,
while $\Exit$ classifies constructible bundles with a section.

\item[\bf \cite{emb1a}:]
{\bf Factorization homology I: higher categories} defines the $\infty$-categories $\cMfd_n^{\vfr}$ of \emph{vari-framed compact $n$-manifolds}, and $\cMfd_n^{\sfr}$ of \emph{solidly framed compact $n$-manifolds}. As a striation sheaf, $\cMfd_n^{\vfr}$ classifies proper constructible bundles equipped with a trivialization of their fiberwise constructible tangent bundle, and $\cMfd_n^{\sfr}$ classifies proper constructible bundles equipped with an injection of their fiberwise constructible tangent bundle into a trivial $n$-dimensional vector bundle.
We then construct a functor $\fC\colon (\cMfd_n^{\vfr})^{\op} \to \Cat_n$ between $\infty$-categories, and use this to define \emph{factorization homology}. This takes the form of a functor between $\infty$-categories
\[
\int \colon \Cat_n \longrightarrow \Fun(\cMfd_n^{\vfr}, \Spaces)
\]
that we show is fully faithful. 
In this sense, $(\infty,n)$-categories define coefficient systems for moduli of vari-framed stratifications of compact framed $n$-manifolds.

\item[\bf \cite{II}:] {\bf Factorization homology II: closed sheaves} establishes a version of factorization homology in where the higher categories are replaces by \emph{pointed} higher categories, and the manifolds are replaced by possibly non-compact manifolds. Further, it characterizes the closed sheaf conditions which identify the essential image of factorization homology homology theories. 

\item[\bf Conjectural:]
In analogy with \cite{emb1a} and \cite{II}, we conjecture that $(\infty,n)$-categories with adjoints define coefficient systems for \emph{solidly $n$-framed} compact $n$-manifolds.  
Namely, we conjecture the existence of a functor $\ov{\fC} \colon (\Mfd_n^{\sfr})^{\op} \to \Cat_n^{\ast/}$ between $\infty$-categories, which factors through $(\Cat_n^{\adj})^{\ast/}$, those pointed $(\infty,n)$-categories in which, for each $0<k<n$, each $k$-morphisms has both a left and a right adjoint.
From this, there is a theory of factorization homology \emph{with adjoints}, which gives a functor between $\infty$-categories
\[
\int \colon (\Cat_n^{\adj})^{\ast/} \longrightarrow \Fun(\Mfd_n^{\sfr},\Spaces)~,
\]
which we conjecture is fully faithful. In this sense, pointed $(\infty,n)$-categories with adjoints would define coefficients systems on the moduli of solidly $n$-framed stratifications of framed $n$-manifolds.

\item[\bf Present:]
The present work proves the cobordism hypothesis.  
Namely, for $\fX$ a symmetric monoidal $(\infty,n)$-category with adjoints and with duals, the $(\infty,n)$-category of $\fX$-valued fully extended framed topological quantum field theories is equivalent to the maximal $\infty$-subgroupoid of $\fX$: 
\[
\Fun^{\ot}(\Bord_n^{\fr}, \fX) ~\simeq~ \obj(\fX)~.
\]  
The cobordism hypothesis is a limiting consequence of the \emph{tangle hypothesis}, one form of which states that, for $\ast \xra{\uno} \cC$ a pointed $(\infty,n+k)$-category with adjoints, there is a canonical identification 
$\Fun_{\ast}(\cT{\sf ang}_{n\subset n+k}^{\fr}, \cC) \simeq \kEnd_\cC(\uno)$ between the space of pointed functors and the space of $k$-endomorphisms of the point in $\cC$.  
Through the conjectural factorization homology with adjoints, this tangle hypothesis amounts to showing that the pointed $(\infty,n+k)$-category $\cT{\sf ang}^{\fr}_{n\subset n+k}$ corresponds to the copresheaf on $\Mfd_{n+k}^{\sfr}$ represented by the object $\RR^k\in \Mfd_{n+k}^{\sfr}$. 
The bordism hypothesis follows from this tangle hypothesis, manifesting as the equivalence 
\[
\Bord_n^{\fr}~ \simeq~ \varinjlim \Omega^k \RR^k
\]
between copresheaves on $\Mfd_n^{\sfr}$.

\item[\bf Future:]
Using the current work, a future work will prove the following generalization of the cobordism hypothesis, articulated by Lurie in \cite{lurie.cobordism}, to manifolds equipped with more relaxed tangential structures than framings.
For $\fX$ a symmetric monoidal $(\oo,n)$-category with adjoints, there is a natural action of the orthogonal group $\sO(n)$ on the space of objects $\obj(\fX)$. 
For $F$ a space with an action of $\sO(n)$, there is an equivalence
\[
\Fun^{\ot}\bigl(\Bord_n^{F}, \fX\bigr) ~ \simeq ~  \Map^{\sO(n)}\bigl(F,\obj(\fX)\bigr)
\]
between the $(\infty,n)$-category of $\fX$-valued fully extended $F$-framed topological quantum field theories and the $\infty$-groupoid of $\sO(n)$-equivariant maps from $F$ to $\obj(\fX)$.

\end{enumerate}
 
 \section{Preliminaries}\label{Preliminaries}
 
 In this section, we recall definitions and results from our antecedent works. This section is only an overview; see those works for precise definitions and details.

\subsection{Stratified spaces}

The work \cite{aft1} presents a theory of stratified spaces founded on a key technical feature of conical smoothness. There, we define an $\oo$-category of (conically smooth) stratified spaces and (conically smooth stratified) maps among them. Here are a number of notable classes of morphisms in $\Strat$.
\begin{definition}\label{def.classes-of-maps}
Let $f\colon X\ra Y$ be a map of stratified spaces.
\begin{itemize}
\item {\bf Embedding (${\sf emb}$):}
$f$ is an \emph{open embedding} if it is an isomorphism onto its image as well as open map of underlying topological spaces.
\item {\bf Refinement (${\sf ref}$):}
$f$ is a \emph{refinement} if it is a homeomorphism of underlying topological spaces, and, for each stratum $X_p\subset X$, the restriction $f_|\colon X_p \to Y$ is an isomorphism onto its image.
\item {\bf Open (${\sf open}$):}
$f$ is \emph{open} if it is an open embedding of underlying topological spaces and a refinement onto its image.  

\item {\bf Fiber bundle:}
$f$ is a \emph{fiber bundle} if, the collection of images $\phi(O) \subset Y$, indexed by pullback diagrams
\[
\xymatrix{
F\times O \ar[r]  \ar[d]
&
X  \ar[d]
\\
O \ar[r]^-{\phi}
&
Y
}
\]
in which the horizontal maps are open embeddings, forms a basis for the topology of $Y$.

\item {\bf Constructible bundle ($\cbl$):}
 $f$ is a \emph{weakly constructible bundle} if, for each stratum $Y_q\subset Y$, the restriction $f_{|}\colon f^{-1}Y_q \to Y_q$ is a fiber bundle. The definition of a constructible bundle is inductive based on depth: in the base case of smooth manifolds, $f\colon X\ra Y$ is a constructible bundle if it is a fiber bundle; in the inductive step of the definition, $f\colon X\ra Y$ is a constructible bundle if it is a weakly constructible bundle and, additionally, if for each stratum $Y_q\subset Y$ the natural map
 \[
\Link_{f^{-1}Y_q}(X) \longrightarrow f^{-1}Y_q \underset{Y_q}\times \Link_{Y_q}(Y)
\]
is a constructible bundle.

\item {\bf Proper constructible ($\pcbl$):}
$f$ belongs to the class ($\pcbl$) if it is a constructible bundle and it is \emph{proper}, i.e., if $f^{-1}C\subset X$ is compact for each compact subspace $C\subset Y$. $f$ belongs to either of the classes $(\pcbl, {\sf surj})$ or $({{\pcbl}, {\sf inj}})$ if it is proper constructible as well as either surjective or injective, respectively. 

\end{itemize}

\end{definition}

The $p$-simplex $\Delta^p$ carries the \emph{standard stratification} by the totally ordered set $[p]$. This has an inductive definition, using the identification $\Delta^p\cong \oC(\Delta^{p-1})$ with the closed-cone on $(p-1)$-simplex, together with the likewise identification $[p] \cong [p-1]^{\tl}$, with the left-cone on the totally ordered set $[p-1]$. Note that this is a highly asymmetric stratification. The standardly stratified simplices assemble as a cosimplicial stratified spaces.
\begin{definition}\label{def.simplices}
The \emph{standard} cosimplical stratified space is the functor
\[
\Delta^\bullet \colon \bdelta\longrightarrow \Strat~,\qquad [p]\mapsto \Bigl(\Delta^p\ni t \mapsto {\sf Max}\{i\mid t_i\neq 0\}\in [p]\Bigr)
\]
with the values on morphisms standard.  
\end{definition}

\begin{definition}[$\Bun$ and $\Exit$]\label{def.Bun}
$\Bun$ is the presheaf on stratified spaces that classifies constructible bundles:
\[
\Bun\colon K \mapsto \bigl|\{X\underset{\cbl}{\xra{\pi}} K\}\bigr|~,
\]
the moduli space of constructible bundles over $K$.  
$\cBun$ is the subpresheaf on stratified spaces that classifies \emph{proper} constructible bundles:
\[
\cBun\colon K \mapsto \bigl|\{X\underset{\sf p.cbl}{\xra{\pi}} K\}\bigr|~,
\]
the moduli space of proper constructible bundles.  
\\
$\Exit$ is the presheaf on stratified spaces that classifies constructible bundles equipped with a section:
\[
\Exit\colon K\mapsto \Bigl|\bigl\{X \overset{\pi,~\cbl}{\underset{\sigma}\rightleftarrows} K \mid \sigma \pi = 1 \bigr\}\Bigr|~.
\]

\end{definition}

The cumulative result of the work of \cite{striat}, and of all the regularity around substrata ensured by conical smoothness, is the following.

\begin{theorem}[\cite{striat}]
The space-valued presheaves $\Bun$ and $\cBun$ and $\Exit$ on $\Strat$ restrict along the functor $\bdelta \xra{\Delta^\bullet} \Strat$ as $\oo$-categories (via the fully faithful embedding of $\oo$-categories into space-valued presheaves on $\bDelta$).
\end{theorem}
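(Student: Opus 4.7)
The plan is to verify Rezk's criteria---the Segal condition and completeness---for each of the three restricted simplicial spaces $\Bun_\bullet := \Bun(\Delta^\bullet)$, $\cBun_\bullet$, and $\Exit_\bullet$. I focus on $\Bun$; the cases $\cBun$ and $\Exit$ are parallel, since properness is preserved under base-change and concatenation, and a section of a concatenated bundle is determined by compatible sections of its pieces.

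For the Segal condition, I would show that for each $p \geq 2$ the restriction along the spine inclusion $\Delta^{\{0,1\}} \cup_{\{1\}} \cdots \cup_{\{p-1\}} \Delta^{\{p-1,p\}} \hookrightarrow \Delta^p$ induces an equivalence
\[
\Bun(\Delta^p) ~\xra{~\sim~}~ \Bun(\Delta^{\{0,1\}}) \underset{\Bun(\Delta^{\{1\}})}\times \cdots \underset{\Bun(\Delta^{\{p-1\}})}\times \Bun(\Delta^{\{p-1,p\}}).
\]
Essential surjectivity is the \emph{concatenation} statement highlighted in the introduction: given compatible constructible bundles $X_i \to \Delta^{\{i-1,i\}}$ with matching fibers at intermediate vertices, one assembles a constructible bundle $X \to \Delta^p$ whose edge restrictions recover the $X_i$. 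This is the principal technical result imported from \cite{striat}, which uses the functorial resolutions of singularities from \cite{aft1} to verify the link condition defining constructibility. Full faithfulness reduces, via the base-change result also established in \cite{striat}, to the claim that a constructible bundle on $\Delta^p$ is determined up to a contractible space of choices by its edge restrictions; I would argue this by induction on $p$ and on the depth of the total space, exploiting the identification $\Delta^p \cong \oC(\Delta^{p-1})$ and the link-cone decomposition at the initial vertex.

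For completeness, I would identify the subspace $\Bun_1^{\sf eq} \subset \Bun(\Delta^1)$ of invertible 1-simplices with $\Bun(\Delta^0)$ via the map sending a stratified space $Y$ to the trivial bundle $Y \times \Delta^1 \to \Delta^1$. The key observation is that an invertible 1-simplex $X \to \Delta^1$ is characterized by the triviality of its link datum over the initial vertex---equivalently, by being a fiber bundle with respect to the unstratified $\Delta^1$---and any such bundle is canonically isomorphic to the trivial bundle on the fiber $X_0$, since $\Delta^1$ is contractible. For $\Exit$, the extra section data restricts under this identification to the section data on the fiber.

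The main obstacle is the concatenation step underlying essential surjectivity: one must glue constructible bundles along a common fiber and verify that the resulting map is \emph{constructible}, not merely weakly constructible. This requires checking the link condition at every stratum of the glued total space, and is precisely where the functorial resolution theory of \cite{aft1} enters in an essential way. Given concatenation and base-change, the remaining arguments reduce to formal manipulations of simplicial presheaves valued in spaces.
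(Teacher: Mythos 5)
The paper you are reading does not prove this theorem; it is imported wholesale from~\cite{striat} (Ayala--Francis--Rozenblyum, \emph{A stratified homotopy hypothesis}), so there is no in-paper proof for your proposal to be checked against. Judging from the programmatic overview in \S1.2, the route taken in~\cite{striat} is not a direct verification of the Segal and univalence conditions for the restricted simplicial spaces. Rather, that work sets up the \emph{striation sheaf} formalism: it first proves that the restricted Yoneda functor $\Cat\to\Psh(\Strat)$ induced by $\exit$ is fully faithful with image characterized by geometric descent conditions, and then exhibits $\Bun$, $\cBun$, and $\Exit$ as striation sheaves. The fact that the restriction along $\Delta^\bullet$ is a complete Segal space is then a consequence of this recognition principle rather than something checked degreewise. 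Your proposal is a legitimate parallel route, and you correctly identify the two technical pillars (closure of constructible bundles under base change, and concatenation of constructible bundles via functorial resolutions from~\cite{aft1}) without which neither route gets off the ground.

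That said, the completeness step in your sketch has a genuine gap. You assert that an invertible $1$-simplex $X\to\Delta^1$ is \emph{characterized} by triviality of its link datum over the initial vertex (equivalently, by $X\to\Delta^1$ being a stratified fiber bundle with respect to the unstratified interval). One direction is easy. The other --- that a constructible bundle $X\to\Delta^1$ which becomes invertible in the homotopy category of the Segal space must already be a fiber bundle --- is not; it requires an argument that exploits the structure of composition in $\Bun$ (e.g.\ via the $2$-simplices witnessing the homotopies to identities, and the cylinder decompositions of Theorem~\ref{thm.class.maps}), and is precisely the kind of claim that cannot be waved through by appeal to contractibility of $\Delta^1$. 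Relatedly, ``canonically isomorphic to the trivial bundle on $X_0$'' overstates matters: what one gets is a contractible space of trivializations, not a preferred one, and it is the contractibility of that space that feeds the univalence condition. As written, the completeness portion is an assertion of the statement rather than an argument for it.
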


\begin{remark}
The $\infty$-subcategory $\cBun \subset \Bun$ consists exactly of the \emph{compact} stratified spaces, but it is \emph{not} full.
For instance, there is no morphism from $\emptyset$ to $S^1$ in $\cBun$, while there is a unique such morphism in $\Bun$. This morphism is represented by the constructible bundle $S^1\times\Delta^{1}\smallsetminus\{0\} \ra \Delta^1$, which is not proper.  

\end{remark}

Here we name several classes of morphisms in $\Bun$.  
To identify these morphisms, it is convenient to use that morphisms in the $\oo$-category $\Bun$ can be constructed as mapping cylinders on stratified maps in two different ways: as cylinders on open maps or as reversed cylinders on proper constructible maps.

\begin{theorem}\label{thm.class.maps}
There are monomorphisms
\[
\Strat^{\opn}  \xra{\sf Cylo}  \Bun  \xla{\sf Cylr} (\Strat^{\pcbl})^{\op}~,
\]
given on morphisms as the respective constructible bundles over $[0,1]$:
\[
{\sf Cylo}(U \underset{\opn}{\xra{~\gamma~}} Y)~:=~ X\times [0,1] \underset{X\times (0,1]} \amalg Y\times (0,1]
\qquad \text{ and }\qquad
{\sf Cylr}(K \underset{\pcbl}{\xla{~\pi~}} X)~:=~ K\underset{X\times \{0\}} \amalg X\times [0,1]~.
\]

\end{theorem}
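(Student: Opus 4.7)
The plan is to build both maps as functors of $\infty$-categories by extending the mapping cylinder formulas from 1-simplices to each $\Delta^p$, and then to prove each is a monomorphism by reconstructing the input open map (respectively proper constructible map) from the resulting constructible bundle. The essential image of each functor is cut out by a triviality condition on one of the two pieces of data that determine a general morphism in $\Bun$.

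First I would extend ${\sf Cylo}$ to higher simplices: for a $p$-simplex of $\Strat^{\opn}$, namely a chain $X_0 \xra{\gamma_1} \cdots \xra{\gamma_p} X_p$ of open maps, form the iterated open mapping cylinder over the standardly stratified simplex $\Delta^p$ inductively via the cone decomposition $\Delta^p \cong \oC(\Delta^{p-1})$, gluing $X_0 \times \Delta^p$ to ${\sf Cylo}(X_1 \xra{\gamma_2} \cdots \xra{\gamma_p} X_p) \times (0,1]$ along the open map $\gamma_1$ on the shared open stratum. One then checks by induction on $p$ that this produces a constructible bundle over $\Delta^p$: the restriction to each stratum is a trivial fiber bundle, and the link along each closed face reduces to a lower-depth iterated cylinder, hence is itself a constructible bundle. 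Cosimplicial naturality in $\bdelta$ follows from the manifest naturality of the open mapping cylinder, delivering a simplicial map $\Strat^{\opn} \to \Bun$. The construction of ${\sf Cylr}$ on $p$-simplices is formally dual: given a chain $X_p \xra{\pi_p} \cdots \xra{\pi_1} X_0$ of proper constructible maps, build the iterated reversed cylinder over $\Delta^p$ by crushing along the $\pi_i$, and check by the same link-recursion that the result is a constructible bundle.

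To establish the monomorphism property, I would use the link-unzip decomposition of \cite{aft1} to give an intrinsic normal form for morphisms in $\Bun$: any constructible bundle $W \to [0,1]$ with $W|_0 \cong X_0$ and $W|_{(0,1]} \cong X_1 \times (0,1]$ is determined by the link $\Link_{X_0}(W) \to X_0$, a proper constructible bundle, together with the open embedding of this link into $X_1$ coming from the collar of the link inside $W|_{(0,1]}$. Under this normal form, ${\sf Cylo}(\gamma\colon X_0 \to X_1)$ has link $X_0 \xra{=} X_0$ and open-embedding part $\gamma$, while ${\sf Cylr}(\pi\colon X_1 \to X_0)$ has link $X_1 \xra{\pi} X_0$ and open-embedding part $\mathrm{id}_{X_1}$. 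These identifications exhibit ${\sf Cylo}$ and ${\sf Cylr}$ as equivalences onto the full sub-$\infty$-categories of $\Bun$ where the proper-constructible (respectively open-embedding) piece of the normal form is an identity; injectivity on objects is automatic since both cylinders preserve the fibers at $0$ and $1$.

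The main obstacle is establishing the link-unzip normal form at all simplex levels simultaneously and compatibly with the Segal structure of $\Bun$. This requires the base-change property and the constructible-bundle concatenation results of \cite{striat}, together with the conical smoothness machinery of \cite{aft1} to ensure that the iterated cylinders glued along depth-shifted strata are bona fide conically smooth stratified spaces and that the link is an intrinsic invariant of the glued total space (independent of any chosen collar). Once the normal form is functorial in $\Delta^p$, the monomorphism statement for both cylinder functors follows by matching the normal-form data against the defining data in $\Strat^{\opn}$ and $(\Strat^{\pcbl})^{\op}$.
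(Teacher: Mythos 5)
You should first note that this paper does not actually prove Theorem~\ref{thm.class.maps}: it sits in the Preliminaries, which recall results from the antecedent works, and its content (construction of the cylinder functors and the identification of closed-creation morphisms as reversed cylinders, cf.\ \S6 of \cite{striat}) is established there, so there is no in-paper argument to compare against. Judged on its own terms, your outline is in the right spirit---iterated cylinders over $\Delta^p$, constructibility checked by links, and a link-based recognition of the image---but it has concrete gaps. First, the images are emphatically \emph{not} full sub-$\oo$-categories of $\Bun$ (if they were, the statement would be nearly vacuous); they are subcategories containing all objects but only special morphisms, and the entire content of the monomorphism claim is that the maps of mapping spaces $\Map_{\Strat^{\opn}}(X,Y)\to\Map_{\Bun}(X,Y)$ and $\Map_{\Strat^{\pcbl}}(Y,X)\to\Map_{\Bun}(X,Y)$ are inclusions of path components, i.e.\ precisely a non-fullness statement that must be proved at the level of mapping spaces and, more generally, of families over arbitrary compact stratified bases (compare the pattern of the proof of Lemma~\ref{tau.factor}, which reduces to lifts along $\exit(K)$).

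Second, your ``normal form'' is stated too strongly and clashes with Remark~\ref{rem.nofact}: the space of factorizations of a morphism in $\Bun$ as a closed-creation followed by an open morphism is \emph{not} contractible, so a morphism is not ``determined by'' an arbitrary such pair; what is available is the \emph{canonical} link factorization $X_0\xra{\sf cls.crt}\Link_{X_0}(W)\xra{\sf open}X_1$, and your argument must be phrased as: the link construction is functorial in families, it retracts ${\sf Cylo}$ (resp.\ ${\sf Cylr}$) onto the data of the open map (resp.\ proper constructible map), and every family whose canonical link factorization has trivial proper-constructible (resp.\ open) part is equivalent, with contractible ambiguity, to a cylinder. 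Relatedly, the open factor of the link factorization is an open morphism (a refinement onto its open image), not in general an open embedding---that distinction is exactly why Lemma~\ref{tau.factor} needs both the smoothness of $M$ and the restriction to embeddings. Finally, the coherence issues you defer in the last paragraph (strict chains of maps versus homotopy-coherent simplices of $\Strat$, compatibility of the iterated cylinders with all face maps via base change of constructible bundles, and the family-level statement needed for $(-1)$-truncatedness) are not side conditions but the actual technical substance of the theorem as proved in \cite{striat} using the striation-sheaf formalism and the resolution/link machinery of \cite{aft1}; as written, your proposal records the statement to be proved there rather than a proof.
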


We next isolate the following important classes of morphisms.

\begin{definition}\label{def.classes}
\begin{itemize}
\item[~]
\item The $\oo$-subcategory $\Bun^{\sf cls.crt}\subset \Bun$ of {\it closed-creation} morphisms is the image of $(\Strat^{\pcbl})^{\op}$.
\item The $\oo$-subcategory $\Bun^{\cls}\subset \Bun$ of {\it closed} morphisms is the image of $(\Strat^{{\pcbl}, {\sf inj}})^{\op}$. 

\item The $\infty$-subcategory $\Bun^{\sf ref}\subset \Bun$ of \emph{refinement} morphisms is the image of $\Strat^{\sf ref}$.

\item The $\infty$-subcategory $\Bun^{\sf emb}\subset \Bun$ of \emph{open embedding} morphisms is the image of $\Strat^{\emb}$.  

\end{itemize}

\end{definition}

\begin{remark}\label{rem.nofact}
Theorem \ref{thm.class.maps} captures a defining feature of $\Bun$: it is an $\oo$-category of stratified spaces in which one can compose open maps and opposites of proper constructible bundles. This composition is not formal. 
In particular, 
these two classes of morphisms do not offer a factorization system, for the following reason.
The link construction of~\cite{aft1} offers a factorization of each morphism $X\to \Delta^1$ in $\Bun$ as a closed-creation morphism followed by an open morphism:
\[
X_{0}\xra{\sf cls.crt} \Link_{X_0}(X)\xra{\sf open} X_1~.
\]
However, the space of such factorizations is not contractible.
Nevertheless, in the case that $X_0$ is a smooth manifold, with the further construing that the second factor be an embedding, this space is contractible: this is Lemma~\ref{tau.factor}.

\end{remark}

\subsection{Framings}
The tangent bundle of any smooth manifold $M$ is classified by a map
\[
\sT_M\colon  M \longrightarrow \Vect^{\sim}
\]
from the underlying $\oo$-groupoid of $M$ to the maximal $\oo$-subgroupoid $\Vect^{\sim}$ of (finite dimensional real) vector spaces and spaces of linear isomorphisms among them. 
The codomain $\infty$-groupoid can be canonically identified
\[
\coprod_{k\geq 0} {\sf BO}(k)  ~\simeq~\Vect^{\sim}
\]
as the coproduct of the classifying spaces of the groups $\sO(k)$, regarded as group-objects in spaces. 
There is a stratified generalization, the \emph{constructible tangent bundle}, established in \cite{emb1a}. For a stratified space $X$, there exists a functor
\[
\exit(X) \ra \Vect^{\sf inj}
\]
from the exit-path $\oo$-category of $X$ to the $\oo$-category $\Vect^{\sf inj}$ of (finite dimensional real) vector spaces and spaces of linear injections among them.
In the case that $X$ is unstratified, meaning that it is a smooth manifold, its exit path $\infty$-category is the underlying $\infty$-groupoid of this smooth manifold, and the constructible tangent bundle of $X$ agrees with the tangent bundle of this smooth manifold.

Generalizing the construction of the fiberwise tangent bundle on the total space of a fiber bundle is the \emph{fiberwise constructible tangent bundle} on the total stratified space of each constructible bundle.  
The following result yields the existence of this construction, through the universal example $\Exit \to \Bun$.

\begin{theorem}[\cite{emb1a}]
There exists a symmetric monoidal functor
\[
\sT\colon \Exit \longrightarrow  \Vect^{\sf inj}~,
\]
which is a localization with respect to closed morphisms and embedding morphisms.
\end{theorem}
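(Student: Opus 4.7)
The plan is to construct $\sT$ by assembling the constructible tangent bundle pointwise from the derivative structure of \cite{aft1}, and then verify the localization property via a universal-example argument. On objects of $\Exit$ — pointed stratified spaces $(X,x)$ — I would set $\sT(X,x) := T_x X_p$, the tangent space of the stratum $X_p \subset X$ containing $x$; conical smoothness guarantees this is a well-defined finite-dimensional vector space, with canonical linear injections induced by the closure of a deeper stratum into a shallower one. For a $1$-morphism in $\Exit$, represented by a constructible bundle $X \to \Delta^1$ with section $\sigma$, I would factor the underlying $\Bun$-morphism through the link $\Link_{X_0}(X)$ as a closed-creation followed by an open morphism (per Remark \ref{rem.nofact}), then construct the required injection $T_{\sigma(0)} X_0 \hookrightarrow T_{\sigma(1)} X_1$ as the composite of the canonical link-splitting inclusion $T_{\sigma(0)} X_0 \hookrightarrow T_{\sigma(0)} X_0 \oplus T_{\sigma(0)} \Link$ with the stratum inclusion induced by the open factor. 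Higher-simplicial coherence follows from the base-change stability of the link construction established in \cite{striat}.

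Symmetric monoidality is essentially tautological from the product compatibility of tangent spaces, since products of pointed stratified spaces correspond to direct sums of tangent spaces. Inversion of the two classes is direct: a closed morphism, built via $\mathsf{Cylr}$ from a proper constructible injection $X \hookrightarrow K$, keeps the section in the deep stratum $X$, so the stratum tangent space is unchanged; an embedding morphism, built via $\mathsf{Cylo}$ from an open embedding $U \hookrightarrow Y$, preserves the stratum tangent space by locality. Each is therefore sent to an isomorphism in $\Vect^{\sf inj}$, so $\sT$ factors through the localization at closed and embedding morphisms. To exhibit a quasi-inverse, I would construct a symmetric monoidal functor $\Vect^{\sf inj} \to \Exit$ sending a vector space $V$ to the unstratified pointed Euclidean space $(V,0)$; its composition with $\sT$ is canonically the identity on $\Vect^{\sf inj}$.

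The hard part is the other composite: showing that every object of $\Exit$ and every exit-path becomes equivalent, after inverting closed and embedding morphisms, to its tangential linear model. I would attack this inductively on depth using conically smooth local models. Near a point in a stratum of positive depth, the stratified space is locally of the form $\sC(L) \times \RR^k$, and the cone-point derivative equivalence $\Aut(\sC(L)) \simeq \Aut(L)$ recalled in the programmatic overview reduces the inductive step — collapsing the cone onto its vertex realizes a zigzag of closed and embedding morphisms identifying the local model with an object of smaller depth. Iterating identifies each pointed stratified space with an unstratified vector space, namely $\sT(X,x)$, up to the inverted morphisms. The real technical challenge is assembling these pointwise identifications into a coherent natural transformation of functors on simplicial objects, rather than a mere zigzag of pointwise equivalences; I would resolve this using the functorial resolutions of singularities guaranteed by conical smoothness in \cite{aft1}.
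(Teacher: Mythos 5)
This theorem is cited from \cite{emb1a}; the present paper does not prove it, so there is no internal proof to compare against. I will therefore evaluate your proposal on its own terms.

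The overall architecture is sound: construct $\sT$ explicitly, verify it inverts closed and embedding morphisms, produce a section $\Vect^{\sf inj}\to\Exit$ via $V\mapsto(V,0)$, and then show that the other composite is identified with the identity after inverting the two classes. (Your word \emph{quasi-inverse} is imprecise --- $\Vect^{\sf inj}\to\Exit$ is a section of $\sT$, not an inverse --- but the argument you sketch, namely that $i\circ\sT$ is naturally connected to $\operatorname{id}_{\Exit}$ by a zigzag of closed and embedding morphisms, is exactly what is needed, and your acknowledgment that the coherence of this zigzag is the core difficulty is well placed.)

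There are, however, two genuine gaps. First, your definition of $\sT$ on $1$-morphisms by factoring through $\Link_{X_0}(X)$ does not come for free. The section of the reversed cylinder $\mathsf{Cylr}(X_0\leftarrow\Link_{X_0}(X))$ requires choosing a point in the link fiber over $\sigma(0)$, and this choice is not unique a priori; one needs the conical-smoothness derivative of the exit path $\sigma$ to single out a canonical lift, and this must be argued. Worse, the remark you cite (Remark \ref{rem.nofact}) is precisely the warning that the closed-creation/open factorization is \emph{not} a factorization system --- the space of such factorizations is non-contractible --- and Lemma \ref{tau.factor} only remedies this when the source fiber is a smooth manifold and the second factor is forced to be an embedding. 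Since your $X_0$ is an arbitrary stratified space and the second factor is merely open, you cannot simply invoke the link as ``the'' factorization at all simplicial levels; the higher coherences of $\sT$ need a construction that does not implicitly rely on a factorization system that the paper tells you does not exist. Appealing to ``base-change stability of the link'' from \cite{striat} is a reasonable direction, but it is asserted rather than carried out. Second, symmetric monoidality is not ``essentially tautological'': producing a symmetric monoidal functor of $\infty$-categories means supplying an infinite tower of coherence data compatible with the $E_\infty$-structures on both sides, and the groupoid-level observation that $T_{(x,y)}(X_p\times Y_q)\cong T_xX_p\oplus T_yY_q$ is only the lowest layer.

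A smaller point: your depth induction is an overcomplication. Once one localizes to a conical neighborhood $\RR^k\times\sC(L)$ of the section point, a \emph{single} closed morphism (the reversed cylinder on $\RR^k\hookrightarrow\RR^k\times\sC(L)$) already lands on an unstratified object --- no iteration on depth is needed. The real work, as you say, is coherence, not depth reduction.
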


Let $X\xra{\pi}K$ be a constructible bundle.
We obtain the fiberwise constructible tangent bundle of $\pi$ as follows.
There is a canonical pullback diagram
\[
\xymatrix{
\exit(X)    \ar[rr]   \ar[d]
&&
\Exit\ar[d]
\\
\exit(K)    \ar[rr]
&&
\Bun
}
\]
where the functor $\exit(X) \ra \Exit$ classifies the constructible bundle $X\underset{K}\times X \overset{\sf proj}{\underset{\sf diag}{~\rightleftarrows~}} X$.
The \emph{fiberwise constructible tangent bundle (of $\pi$)} is the composite functor
\[
\sT_\pi \colon \exit(X)\xra{X\underset{K}\times X \rightleftarrows X} \Exit \overset{\sT}\longrightarrow \Vect^{\sf inj}~.
\]

\begin{definition}\label{def.vfr}
A \emph{tangential structure} is an $\infty$-category $\tau \to \Vect^{\sf inj}$ over $\Vect^{\sf inj}$.
Let $\tau$ be a tangential structure.
A \emph{$\tau$-framing} of a constructible bundle $X\xra{\pi}K$ is a lift 
\[
\xymatrix{
&&
\tau  \ar[d]
\\
\exit(X) \ar@{-->}[urr]   \ar[rr]^-{\sT_\pi}
&&
\Vect^{\sf inj}
}
\]
between the fiberwise dimension bundle and the fiberwise constructible tangent bundle.
A \emph{$\tau$-framing} of a stratified space $X$ is a $\tau$-framing of the constructible bundle $X\to \ast$.

\end{definition}

\begin{definition}\label{def.Bun.tau}
For $\tau$ a tangential structure, the $\infty$-category of \emph{$\tau$-framed manifolds},
\[
\Bun^\tau~,
\]
is that classifying $\tau$-framed constructible bundles: it is an $\infty$-category over $\Bun$ for which, for each constructible bundle $X\xra{\pi}K$, the $\infty$-groupoid of lifts
\[
\xymatrix{
&&
\Bun^\tau  \ar[d]
\\
\exit(K) \ar@{-->}[urr]   \ar[rr]^-{X\xra{\pi}K}
&&
\Bun
}
\]
is identified as the $\infty$-groupoid of $\tau$-framings of $\pi$.  
The $\infty$-category of \emph{compact $\tau$-framed manifolds},
\[
\cBun^\tau~:=~\Bun^\tau_{|\cBun}~,
\]
is the base change of $\Bun^\tau$ along the monomorphism $\cBun \hookrightarrow \Bun$.

\end{definition}

\begin{theorem}[\cite{emb1a}]\label{bun.tau.exists}
For each tangential structure $\tau$, the $\infty$-category $\Bun^\tau$ over $\Bun$ exists and the $\infty$-category $\cBun^\tau$ over $\cBun$ exists.

\end{theorem}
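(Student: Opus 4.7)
The plan is to construct $\Bun^\tau$ as a striation sheaf on $\Strat$---a presheaf of spaces satisfying the geometric descent conditions of \cite{striat}---and then restrict along $\Delta^\bullet\colon \bdelta \to \Strat$ to extract $\Bun^\tau$ as an $\infty$-category, via the fully faithful embedding of $\infty$-categories into presheaves on $\bdelta$. The essential inputs are the symmetric monoidal functor $\sT\colon \Exit \to \Vect^{\sf inj}$ implementing the fiberwise constructible tangent bundle, together with the fact that $\Bun$, $\cBun$, and $\Exit$ are already known to be striation sheaves.

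Explicitly, for each stratified space $K$, set
\[
\Bun^\tau(K) ~:=~ \Bigl| \bigl\{ (\pi\colon X\underset{\cbl}{\to} K,\; \phi) \mid \phi\colon \exit(X)\to \tau \text{ lifts } \sT_\pi \bigr\} \Bigr|.
\]
Functoriality in $K$ follows because constructible bundles are stable under base change (\cite{aft1,striat}), and the fiberwise constructible tangent bundle is natural: for $g\colon K' \to K$ with pullback $X' = X\times_K K'$, the fiberwise tangent $\sT_{\pi'}$ is computed as the composite $\exit(X') \to \exit(X) \xra{\sT_\pi} \Vect^{\sf inj}$, so $\tau$-framings pull back. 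The forgetful morphism of presheaves $\Bun^\tau \to \Bun$ is tautological.

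Next, one verifies that $\Bun^\tau$ is a striation sheaf. Descent for the underlying bundle data is precisely the striation sheaf condition for $\Bun$. For descent of the framings, express $\Bun^\tau$ as a pullback in the $\infty$-topos of presheaves on $\Strat$, invoking that $\tau$ and $\Vect^{\sf inj}$, being $\infty$-categories, are themselves striation sheaves once restricted to $\bdelta$, and that striation sheaves are closed under limits. The universal property characterizing $\Bun^\tau$ then unwinds from the pullback identification $\exit(X) \simeq \exit(K)\times_\Bun \Exit$: a lift of the classifying functor $\exit(K) \xra{\pi\colon X\to K} \Bun$ along $\Bun^\tau \to \Bun$ is, by this identification, precisely a functor $\exit(X) \to \tau$ over $\sT_\pi$. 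The compact version is then obtained by base change $\cBun^\tau := \Bun^\tau \times_\Bun \cBun$, using that $\cBun \hookrightarrow \Bun$ is an $\infty$-subcategory classifying proper constructible bundles, a condition that is local on the base.

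The main obstacle is the descent step: verifying gluing of $\tau$-framings along the specific geometric covers characterizing striation sheaves. Here the subtlety of Remark~\ref{rem.nofact}---that closed-creation and open-embedding morphisms do not assemble into a formal factorization system---becomes relevant, and the argument must leverage the same link-theoretic machinery of \cite{aft1,striat} that was required to build $\sT$ itself as a bona fide functor rather than a mere fiberwise assignment. Granted this, descent for $\Bun^\tau$ is inherited from descent for $\Bun$ and $\tau$ through the naturality of $\sT$ under pullback, completing the construction.
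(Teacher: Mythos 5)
The paper you are working from does not actually prove this statement: Theorem~\ref{bun.tau.exists} is imported from \cite{emb1a}, and the present text only records the universal property (Definition~\ref{def.Bun.tau}) that the construction must satisfy. So your proposal has to be judged as a proof of existence of an object with that universal property, and there it has a genuine gap. Your candidate presheaf $K\mapsto \bigl|\{(X\xra{\cbl}K,\ \phi\colon \exit(X)\to\tau \text{ lifting }\sT_\pi)\}\bigr|$ is the right object, and your identification of the universal property via $\exit(X)\simeq \exit(K)\times_{\Bun}\Exit$ is exactly how the paper phrases it. But the step where you argue that this presheaf is a striation sheaf (equivalently, that its restriction along $\Delta^\bullet$ is an $\infty$-category) is where the real content lies, and your argument for it is wrong as stated: $\Bun^\tau$ is \emph{not} a pullback in presheaves on $\Strat$ of $\Bun$ against $\tau\to\Vect^{\sf inj}$, because the framing datum is a functor out of $\exit(X)$, where the total space $X$ varies with the point of $\Bun(K)$, not a functor out of $\exit(K)$. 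What is true is that $\Bun^\tau$ is a \emph{relative mapping object}: forming $\Exit\times_{\Vect^{\sf inj}}\tau$ over $\Exit$ is a pullback, but passing from there to $\Bun^\tau$ is a dependent product (pushforward) along the projection $\Exit\to\Bun$. Its existence as an $\infty$-category over $\Bun$ is not a formal closure-under-limits statement; it requires that $\Exit\to\Bun$ be an exponentiable fibration (in the sense of \cite{fibrations}), or else a direct verification of the Segal and univalence/descent conditions for your explicit presheaf, which in turn needs a descent statement for $\exit$ of total spaces of constructible bundles over covers of the base. Your closing paragraph acknowledges this obstacle but then assumes it away (``granted this''), and the appeal to Remark~\ref{rem.nofact} and to ``the same link-theoretic machinery'' does not identify the actual mechanism.

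Two smaller points. First, making your moduli-space assignment into an honest presheaf of spaces requires handling the coherence of base change of constructible bundles (base change is not strictly functorial); this is standard but should be addressed, e.g.\ by straightening a suitable fibration over $\Strat$, rather than asserted. Second, your definition $\cBun^\tau:=\Bun^\tau\times_{\Bun}\cBun$ agrees with the paper's Definition~\ref{def.Bun.tau} and is unproblematic once $\Bun^\tau$ exists, since base change along the monomorphism $\cBun\hookrightarrow\Bun$ of $\infty$-categories again yields an $\infty$-category. The fix for the main gap is to replace the pullback claim by the dependent-product construction: set $\Bun^\tau:=\pi_\ast\bigl(\sT^\ast\tau\bigr)$ for $\pi\colon\Exit\to\Bun$, justify the existence of $\pi_\ast$ by exponentiability of $\pi$, and then your universal-property computation via $\exit(X)\simeq\exit(K)\times_{\Bun}\Exit$ goes through verbatim.
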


\subsection{Flagged higher categories and Segal sheaves}

We now define Joyal's category $\btheta_n$, the $n$-fold wreath product of the simplex category $\bdelta$: see \cite{berger}, \cite{rezk-n}, and \S3.5 of \cite{emb1a}.
Denote $\Fin_\ast:=\Fin^{\ast/}$ for the category of based finite sets; each object in which may be denoted $I_+ := I\amalg \{+\}$ where $+$ is the base-point. 
The category  
\[
\Fin_{\ast \star}~\subset~ \Fin_\ast^{\star_+/}
\]
is the subcategory of the undercategory consisting of those based maps $(\star_+ \xra{f} I_+)$ from the two-element based set for which the inequality $f(\star)\neq +$ holds.  
The canonical projection $\Fin_{\ast \star} \to \Fin_\ast$ is an exponentiable fibration in the sense of~\cite{fibrations}.  
This observation validates the following.
 
\begin{definition}[Wreath]\label{def.wreath}
For $\cD\to \Fin_\ast$ an $\oo$-category over based finite sets, 
the \emph{wreath} functor 
\[
\cD\wr - \colon \Cat \longrightarrow \Cat_{/\cD}
\]
is right adjoint to the composite functor
\[
{\Cat}_{/\cD} \longrightarrow {\Cat}_{/\Fin_\ast} \xra{~{}~-\underset{\Fin_\ast}\times \Fin_{\ast \star}~{}~} {\Cat}_{/\Fin_{\ast\star}} \longrightarrow \Cat~.
\]
\end{definition}

Joyal's category $\bTheta_n$ is defined by induction on $n\geq 0$ by iterating the wreath construction with respect to the functor 
\[
\bDelta^{\op} \xra{~\Delta[1]/\partial \Delta[1]~} \Fin_\ast~.
\]
\begin{definition}[\cite{joyaltheta}]\label{def.theta}
For $n\geq 0$, Joyal's category $\btheta_n$ is as follows:
\begin{itemize}
\item The category $\bTheta_0:=\ast$ is terminal.
\item 
Suppose $n>0$.
The category $\bTheta_n^{\op}$ is the wreath product of $\bDelta^{\op}$ and $\bTheta_{n-1}^{\op}$: 
\[
\bTheta_n^{\op} ~:=~ \bDelta^{\op} \wr \bTheta_{n-1}^{\op}~.
\] 
\end{itemize}
\end{definition}

\begin{definition}\label{def.n-Segal-cov}
Let $n\geq 0$.  
\begin{itemize}

\item {\bf Inerts:}  
The \emph{inert} $\infty$-subcategory 
\[
\bTheta_{n}^{\sf inrt}~\subset~\btheta_n
\]
defined by induction on $n\geq 0$ as follows.
\begin{itemize}
\item 
$\bTheta_{0}^{\sf inrt}~\subset~\btheta_0 = \ast$.  

\item 
Suppose $n>0$.  
Consider the subcategory $\bDelta^{\sf inrt}\subset \Delta$ consisting of the consecutive monomorphisms.  
Define $(\btheta_n^{\sf inrt})^{\op}:=(\bdelta^{\sf inrt})^{\op} \wr (\btheta_{n-1}^{\sf inrt})^{\op}$.

\end{itemize}

\item {\bf Cells:}
Let $0\leq i \leq n$.
The \emph{$i$-cell} $c_i\in \btheta_n^{\op}$ is the initial object if $i=0$ and if $i>0$ it is the object $\ast \xra{\{c_i\}} \btheta_n^{\op}$ representing the pair of functors $\ast \xra{\{[1]\}} \bdelta^{\op}$ and $\ast \simeq \{[1]\}\underset{\Fin_\ast}\times \Fin_{\ast \star} \xra{\{c_{i-1}\}} \btheta_{n-1}^{\op}$.   
The \emph{boundary of the $i$-cell} is the subfunctor
\[
\partial c_i \subset c_i \colon \bTheta_n^{\op} \longrightarrow \Spaces
\]
of the representable functor, consisting, for each $T\in \bTheta_n$, of those morphisms $T\to c_i$ in $\bTheta_n$ that factor thorugh an object $S\in \bTheta_{i-1}\subset \bTheta_n$.

\item {\bf Segal covering diagrams:} A colimit diagram $[1]\times[1]\to \btheta_n$, written
\[
\xymatrix{
T_0  \ar[r]  \ar[d]
&
T'  \ar[d]
\\
T''  \ar[r]
&
T,
}
\]
is a \emph{Segal covering} diagram if each arrow is inert.   

\item {\bf Univalence diagrams:} For $n>0$, a colimit diagram $\cE^{\tr} \to \btheta_n$ is a \emph{univalence} diagram if it has either of the following two properties:
\begin{itemize}

\item The projection $\cE^{\tr} \to \bdelta$ factors through the functor $\ast \xra{\{c_1\}} \bdelta$, and the functor $(\cE^{\tr})^{\op} \simeq (\cE^{\tr})^{\op} \underset{\Fin_\ast}\times \Fin_{\ast \star} \to \btheta_{n-1}^{\op}$ is the opposite of a univalence diagram.

\item The projection ${\cE^{\tr}} \to \bdelta$ is the univalence diagram, and the functor $(\cE^{\tr})^{\op}\underset{\Fin_\ast}\times \Fin_{\ast \star} \to \btheta_{n-1}^{\op}$ factors through $\ast \xra{\{c_0\}} \btheta_{n-1}^{\op}$.

\end{itemize}

\item {\bf Zero-pointed:}
The category $\bTheta_{n,\emptyset}$ is initial among zero-pointed categories under $\bTheta_n$; it is obtained from $\bTheta_n$ by freely adjoining a zero-object.

\end{itemize}

\end{definition}

We will use the following definition of $(\oo,n)$-categories due to Rezk. Regarding other definitions of $(\oo,n)$-categories, see \cite{clark.chris}, \cite{bergnerrezk1}, \cite{simpson}, and the references therein.

\begin{definition}[\cite{rezk-n}]\label{def.segal.sheaves}
The $\oo$-category of \emph{Segal sheaves} on $\btheta_n$
\[
\Shv(\btheta_n)~\subset~ \Fun(\btheta_n^{\op},\spaces)
\]
is the full $\oo$-subcategory consisting of those functors that carry (the opposites of) Segal covering diagrams to limit diagrams.
The $\oo$-category of \emph{univalent Segal sheaves} on $\btheta_n$
\[
\Shv^{\sf unv}(\btheta_n)~\subset~  \Shv(\btheta_n)
\]
is the full $\oo$-subcategory consisting of those Segal sheaves that carry (the opposites of) univalence diagrams to limit diagrams. 
The $\oo$-category of \emph{$(\oo,n)$-categories} is the $\oo$-category of univalent Segal sheaves on $\btheta_n$:
\[
\Cat_n~: =~ \Shv^{\sf unv}(\btheta_n)~.
\]
\end{definition}

Because Segal covering diagrams and univalence diagrams are in particular colimit diagrams in $\bTheta_n$, the Yoneda functor factors:
\[
\bTheta_n~\hookrightarrow~\Cat_n~\subset~\Fun(\bTheta_n^{\op},\Spaces)~.
\]
We point out the colimit diagram in $\fCat_n$: for each $0\leq i \leq n$,
\begin{equation}\label{4}
\xymatrix{
\partial c_{i-1}    \ar[r]  \ar[d]
&
c_{i-1}  \ar[d]^-{t}
\\
c_{i-1}  \ar[r]^-{s}  
&
\partial c_i .
}
\end{equation}

\begin{remark}\label{rem.enriched}
A notion of $\Cat(\cV)$, $\oo$-categories enriched in a symmetric monoidal $\oo$-category $\cV$, is established in~\cite{gepnerhaugseng}. 
An alternative inductive definition for $(\oo,n)$-categories is as
\[
\Cat_n ~ \simeq~ \Cat(\Cat_{n-1})~,
\]
$\oo$-categories enriched in $(\oo,n-1)$-categories.
See Bergner--Rezk \cite{bergnerrezk2} for the equivalence of these notions.
\end{remark}

\begin{remark}
The more general $\oo$-category of Segal sheaves will also play an important role for us. 
Indeed, compact manifolds and cobordisms among them naturally organize as a Segal sheaf on $\btheta_n$ which is not univalent due to the existence of non-trivial h-cobordisms.
\end{remark}

\begin{definition}\label{def.flagged}
The $\oo$-category of \emph{flagged $(\oo,n)$-categories}is the full $\oo$-subcategory
\[
\fCat_n~\subset~ \Fun([n],\Cat_n)
\]
consisting of those $\cC_0\ra \cC_1\ra \ldots \ra \cC_n$ for which:
\begin{itemize}
\item for each $0\leq k \leq n$, the $(\infty,n)$-category $\cC_k$ is an $(\oo,k)$-category;
\item for each $0\leq i \leq j \leq k\leq n$, the functor $\cC_j \ra \cC_k$ is \emph{$(k-j)$-connective}, which is to say each solid diagram among $(\infty,n)$-categories
\[
\xymatrix{
\partial c_i \ar[d]\ar[r]&\cC_j\ar[d]\\
c_i\ar[r]\ar@{-->}[ur]&\cC_k}
\]
admits a filler.  
\end{itemize}
\end{definition}

\begin{observation}[\cite{flagged}]\label{cats.are.flagged}
The projection $\ev_n\colon \fCat_n \to \Cat_n$ is a localization, with right adjoint
\[
\Cat_n \longrightarrow \fCat_n~,\qquad
\cC~\mapsto~(\cC_{\leq 0} \to \cC_{\leq 1} \to \dots \to \cC_{\leq n})~,
\]
where, for each $0\leq i \leq n$,  $\cC_{\leq i}$ is the maximal $(\infty,i)$-subcategory of $\cC$.  

\end{observation}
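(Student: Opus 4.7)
The plan is to verify that $R\colon \cC \mapsto (\cC_{\leq 0} \to \cC_{\leq 1} \to \dots \to \cC_{\leq n})$ defines a functor $\Cat_n \to \fCat_n$, to establish the adjunction $\ev_n \dashv R$ by a direct identification of mapping spaces, and to observe that the counit is an equivalence so that $R$ is fully faithful and $\ev_n$ is a localization.

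For functoriality, I would note that $\cC \mapsto \cC_{\leq j}$ is itself a functor $\Cat_n \to \Cat_n$: any functor $F \colon \cC \to \cD$ preserves invertible higher morphisms, so restricts to $\cC_{\leq j} \to \cD_{\leq j}$. Assembling these for $0 \leq j \leq n$ yields $R$ as a functor into $\Fun([n], \Cat_n)$. To see that $R$ lands in $\fCat_n$, the only nontrivial condition of Definition~\ref{def.flagged} is the connectivity of each inclusion $\cC_{\leq j} \hookrightarrow \cC_{\leq k}$; this holds tautologically, because for each $0 \leq i \leq j \leq k$, the two $(\infty,n)$-categories have the same $i$-morphisms, namely those of $\cC$, so any square $(\partial c_i \to \cC_{\leq j},\, c_i \to \cC_{\leq k})$ admits the obvious filler.

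For the adjunction, I would use that $\fCat_n \subset \Fun([n], \Cat_n)$ is full. A map $F = (F_0 \to \dots \to F_n) \to R(\cC)$ in $\fCat_n$ is a natural transformation, that is, a compatible family $\phi_j \colon F_j \to \cC_{\leq j}$. Post-composing with the monomorphisms $\cC_{\leq j} \hookrightarrow \cC$ gives a compatible family $F_j \to \cC$, which is determined by $\phi_n \colon F_n \to \cC$. Conversely, given any $\phi_n \colon F_n \to \cC$, the composite $F_j \to F_n \to \cC$ automatically factors through $\cC_{\leq j}$: since $F_j$ is an $(\infty,j)$-category, every $i$-morphism of $F_j$ with $i > j$ is invertible, and so maps to an invertible $i$-morphism of $\cC$. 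This yields a natural equivalence
\[
\Map_{\fCat_n}\bigl(F, R(\cC)\bigr) \simeq \Map_{\Cat_n}\bigl(\ev_n(F), \cC\bigr),
\]
establishing $\ev_n \dashv R$. Because $\cC_{\leq n} = \cC$, the composite $\ev_n \circ R$ is the identity of $\Cat_n$, so the counit is an equivalence; equivalently, $R$ is fully faithful and $\ev_n$ is a localization.

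The main subtle point is the automatic factorization used above: that every functor from an $(\infty,j)$-category to an $(\infty,n)$-category lands in the maximal $(\infty,j)$-subcategory of the target. Intuitively this is because functors preserve invertibility of higher morphisms, but in the Segal-sheaf presentation of Definition~\ref{def.segal.sheaves} it requires verification using the cell-by-cell description of $\Cat_n$ and the univalence condition that characterizes when higher morphisms are invertible. Once granted, the remainder of the proof is formal.
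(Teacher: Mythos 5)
The paper does not prove this statement; it is an Observation cited from \cite{flagged}, listed as in preparation, so there is no internal proof to compare against. On its own merits, your argument is correct and is the expected one: the flag conditions for $R(\cC)$ hold tautologically since $\cC_{\leq j}$ and $\cC_{\leq k}$ agree on $c_i$ for $i\leq j\leq k$; the identification $\Map_{\fCat_n}(F,R\cC)\simeq\Map_{\Cat_n}(F_n,\cC)$ follows because each $\cC_{\leq j}\hookrightarrow\cC$ is a monomorphism (so the compatible family $\phi_j$ carries no data beyond $\phi_n$, while conversely the factorization of $F_j\to F_n\to\cC$ through $\cC_{\leq j}$ exists because $F_j$ is an $(\infty,j)$-category); and $\ev_n\circ R\simeq\operatorname{id}$ gives full faithfulness of $R$ and hence that $\ev_n$ is a localization. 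You correctly isolate the single non-formal ingredient, namely that a functor out of an $(\infty,j)$-category into an $(\infty,n)$-category automatically factors through the maximal $(\infty,j)$-subcategory of its target; in the Segal-sheaf model of Definition~\ref{def.segal.sheaves} this requires characterizing the image of $\Cat_j\hookrightarrow\Cat_n$ among univalent Segal sheaves and checking that the factorization is detected on the representable cells, which is precisely the kind of content the cited \cite{flagged} is being relied on to supply, and your proof structure is consistent with what one would expect there.
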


\begin{theorem}[\cite{flagged}]\label{thm.flagged}
The restricted Yoneda functor $\fCat_n \ra \Fun(\btheta_n^{\op},\spaces)$ is fully faithful with image the Segal sheaves:
\[
\fCat_n ~ \simeq~  \Shv(\btheta_n)~.
\]
The restricted Yoneda functor $\fCat_n^{\ast/} \ra \Fun(\btheta_{n,\emptyset}^{\op},\spaces)$ is fully faithful with image the pointed Segal sheaves:
\[
\fCat_n^{\ast/} ~\simeq~ \Shv(\btheta_{n,\emptyset})
\]
between pointed flagged $(\oo,n)$-categories and those Segal sheaves $\cF$ on $\btheta_{n,\emptyset}$ whose value on the empty set $\emptyset$ is terminal: $\cF(\emptyset)=\ast$.

\end{theorem}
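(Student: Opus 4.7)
The plan is to proceed by induction on $n\geq 0$, exploiting the wreath decomposition $\bTheta_n^{\op} = \bDelta^{\op}\wr \bTheta_{n-1}^{\op}$ of Definition~\ref{def.theta} in tandem with the enriched identification $\Cat_n \simeq \Cat(\Cat_{n-1})$ recalled in Remark~\ref{rem.enriched}. For the base case $n=0$, the category $\bTheta_0 = \ast$ is terminal, so $\Shv(\bTheta_0) = \Spaces$, which matches $\fCat_0 = \Spaces$. The pointed version is likewise immediate from the fact that $\bTheta_{0,\emptyset} = \ast_+$.

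For the inductive step, unwind the wreath to view a presheaf $\cF\colon \bTheta_n^{\op}\to\Spaces$ as a simplicial object $X_\bullet$ valued in presheaves on $\bTheta_{n-1}$, with $X_p = \cF([p];-,\dots,-)$ a presheaf in each of its $p$ $\bTheta_{n-1}$-arguments. The Segal-covering condition on $\bTheta_n$ decouples into two families: first, the Segal-covering condition in each $\bTheta_{n-1}$-slot independently, so that each of the $p$-argument slots of $X_p$ is separately a Segal sheaf on $\bTheta_{n-1}$; and second, the outer Segal condition in the $\bDelta$-direction asserting that $X_p \xrightarrow{~\simeq~} X_1 \times_{X_0}\cdots\times_{X_0}X_1$. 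By the inductive hypothesis $\Shv(\bTheta_{n-1})\simeq\fCat_{n-1}$, the first family identifies $X_1$ as a flagged $(\infty,n-1)$-category (valued in the $\bDelta$-direction), and the second identifies the totality $X_\bullet$ as an internal category object in $\fCat_{n-1}$ with space of objects $X_0$.

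Given such a category object, Bergner--Rezk's equivalence $\Cat(\Cat_{n-1})\simeq \Cat_n$ produces an $(\infty,n)$-category $\cC_n$ together with a map $X_0 \to \obj(\cC_n)$ from the space of objects of the category object. Define the flag as follows: $\cC_0:= X_0$, and for each $0<k\leq n$ take $\cC_k$ to be the $(\infty,k)$-category with object space $X_0$ whose hom-$(\infty,k-1)$-categories are obtained by applying the inductive passage $\fCat_{n-1}\to \fCat_{k-1}$ to the hom flagged $(\infty,n-1)$-categories of the internal category structure and then extracting the appropriate layer. The connectivity condition of Definition~\ref{def.flagged} translates exactly into the fact that all $\cC_k$ share the common object space $X_0$ and that the hom-layers in the flag are related by the inductive connectivity. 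Conversely, a flagged $(\infty,n)$-category $\cC_0\to\cdots\to\cC_n$ yields the Segal sheaf whose simplicial nerve in the outer $\bDelta$-direction records strings of composable morphisms in $\cC_n$ with vertex-decorations pulled back from $\cC_0$, and whose internal $\bTheta_{n-1}$-structure records hom flagged $(\infty,n-1)$-categories. That these constructions are mutually inverse follows by induction on the hom-objects.

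The pointed version follows from the unpointed one together with the observation that $\bTheta_{n,\emptyset}$ is obtained from $\bTheta_n$ by freely adjoining a zero object, so a Segal sheaf on $\bTheta_{n,\emptyset}$ with $\cF(\emptyset)=\ast$ is the same as a Segal sheaf on $\bTheta_n$ equipped with a basepoint of $\cF(c_0) = X_0$, matching the pointed flagged notion via Observation~\ref{cats.are.flagged}. The step I expect to be the principal technical obstacle is the faithful bookkeeping of the connectivity conditions of Definition~\ref{def.flagged} against the internal-category data --- specifically, verifying that the intermediate $(\infty,k)$-categories $\cC_k$ extracted from a Segal sheaf reproduce exactly the $k$-truncations inherited from the category object in $\fCat_{n-1}$, and that the resulting sequence admits the correct $(k-j)$-connective transition functors rather than an a~priori coarser structure.
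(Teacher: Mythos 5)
You should first note that the paper does not actually prove this statement: it is imported wholesale from \cite{flagged} (``Flagged higher categories'', in preparation), and the remark immediately following it even observes that one may take the equivalence $\fCat_n\simeq \Shv(\btheta_n)$ as the \emph{definition} of $\fCat_n$, rendering the present paper logically independent of that reference. So there is no in-paper proof to compare against, and your proposal can only be measured against the argument one would expect; your inductive strategy via the wreath decomposition $\bTheta_n^{\op}=\bDelta^{\op}\wr\bTheta_{n-1}^{\op}$ is indeed the natural route.

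That said, the proposal has a genuine gap at its central step. After Segal reduction, a Segal sheaf on $\bTheta_n$ unwinds to a category object (a ``categorical algebra'') in $\Shv(\bTheta_{n-1})\simeq\fCat_{n-1}$ whose object-object is a space --- note that before imposing the Segal conditions a presheaf on $\bDelta^{\op}\wr\bTheta_{n-1}^{\op}$ is \emph{not} a simplicial object in presheaves on $\bTheta_{n-1}$, since $\cF$ takes multi-slot arguments $[p](\theta_1,\dots,\theta_p)$; only the Segal condition reduces it to the $[0]$- and $[1]$-level data you use. More seriously, the equivalence $\Cat_n\simeq\Cat(\Cat_{n-1})$ of Remark~\ref{rem.enriched} concerns \emph{enriched} $(\infty,n)$-categories, which in the Gepner--Haugseng framework are categorical algebras only after localizing at the fully faithful and essentially surjective maps (equivalently, after completion). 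So your sentence ``Bergner--Rezk's equivalence produces an $(\infty,n)$-category $\cC_n$ together with a map $X_0\to\obj(\cC_n)$'' silently invokes a completion functor, and the assertion that this construction and the converse nerve-type construction (recording strings of composable morphisms with vertices lifted to $\cC_0$) are mutually inverse is exactly the crux of the theorem. Already for $n=1$ (flagged $(\infty,1)$-categories versus Segal spaces) what is needed is that the completion map $X_\bullet\to\widehat{X}_\bullet$ is fully faithful on hom-spaces and essentially surjective, so that $X_\bullet$ is recovered as the base change of $\widehat{X}_\bullet$ along $X_0\to\widehat{X}_0$; your phrase ``follows by induction on the hom-objects'' hides precisely this Dwyer--Kan/completion argument rather than supplying it, and the connectivity bookkeeping you flag as the main obstacle is comparatively routine once that input is in place. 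The treatment of the pointed case (a basepoint of $\cF(c_0)$, using that $c_0$ is terminal in $\bTheta_n$ and that $\bTheta_{n,\emptyset}$ freely adjoins a zero object) is fine.
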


\begin{remark}
Theorem \ref{thm.flagged} is conceptually convenient in the present work, but it is not logically necessary. If one simply takes the equivalence $\fCat_n \simeq \Shv(\btheta_n)$ to be the definition of $\fCat_n$, then the present work becomes logically independent of \cite{flagged}.

\end{remark}

\begin{terminology}\label{source.target}
Let $0\leq k \leq n$.
Let $\cC\in \fCat_n$ be a flagged $(\infty,n)$-category.
The \emph{space of $k$-morphisms in $\cC$} is the value $\cC(c_k)$; the \emph{space of objects in $\cC$} is the value 
\[
\obj(\cC)~:=~ \cC(c_0)~.
\]
The \emph{source-target} map is the canonical map $(s,t)\colon \cC(c_k) \to \cC(\partial c_k)$;
the pushout square~(\ref{4}) determines the pullback square among spaces:
\[
\xymatrix{
\cC(\partial c_k)  \ar[r]^-{s}  \ar[d]_-{t}
&
\cC(c_{k-1})  \ar[d]^-{(s,t)}
\\
\cC(c_{k-1})  \ar[r]^-{(s,t)}
&
\cC(\partial c_{k-1}).
}
\]
For $(x,y)\colon \partial c_{k-1} \to \cC$ a pair of $(k-1)$-morphisms in $\cC$ with identified source-target, the \emph{space of $i$-morphisms in $\cC$ from $x$ to $y$} is the pullback among spaces:
\[
\xymatrix{
\Map_\cC(x,y)\ar[r]\ar[d]&\cC(c_k)\ar[d]
\\
\ast\ar[r]^-{\{x,y\}}&\cC(\partial c_k) .
}
\]
For $0\leq i\leq k$, the \emph{identities} map is the map $\cC(c_i) \xra{\sf id } \cC(c_k)$ obtained by evaluating on the canonical map $c_k \to c_i$.  
For each object $x\in \cC(c_0)$, the \emph{space of $k$-endomorphisms of $x$ in $\cC$} is the pullback among spaces:
\[
\xymatrix{
\kEnd_\cC(x)   \ar[r]  \ar[d]
&
\cC(c_k)   \ar[d]^-{(s,t)}
\\
\{{\sf id}_{x}\}     \ar[r]
&
\cC(\partial c_k).
}
\]

\end{terminology}

\subsection{Factorization homology}
We explain our primary conduit between higher categories and differential topology: factorization homology, as developed in~\cite{emb1a}.
This form of factorization homology this connects $(\infty,n)$-categories with vari-framed stratified spaces of dimension at most $n$.  
We begin with the notion of \emph{vari-framing}.

Observe the functor from the poset of non-negative integers,
\[
\epsilon \colon \ZZ_{\geq 0} \longrightarrow \Vect^{\sf inj}
~,\qquad
\bigl(\RR^0\hookrightarrow \RR^1  \hookrightarrow \dots\bigr)~,
\]
classifying the injections as first coordinates among Euclidean spaces.
For $n\geq 0$, observe the composite functor
\begin{equation}\label{n.to.vect}
[n] = (\ZZ_{\geq 0})_{/n}\longrightarrow \ZZ_{\geq 0} \longrightarrow \Vect^{\sf inj}
~,\qquad
\bigl(\RR^0\hookrightarrow \RR^1 \hookrightarrow \dots\hookrightarrow \RR^n\bigr)~.
\end{equation}
\begin{definition}\label{def.mfd.vfr}
For $n\geq 0$, the $\infty$-categories of \emph{vari-framed $n$-manifolds} and of \emph{vari-framed compact $n$-manifolds} are
\[
\Mfd^{\vfr}_n~:=~\Bun^{[n]}
\qquad\text{ and }\qquad
\cMfd_n^{\vfr}~:=~\cBun^{[n]}~,
\qquad \text{ respectively },
\]
where $[n]$ is regarded as a tangential structure via~(\ref{n.to.vect}).

\end{definition}

\begin{remark}
Assigning to each point in a stratified space the dimension of the stratum in which it lies assembles as a functor
\[
{\sf dim}\colon \Exit \longrightarrow \ZZ_{\geq 0}
~,\qquad
(x\in X)\mapsto {\sf dim}_x(X)~.
\]
For $X\xra{\pi}K$ a constructible bundle, the \emph{fiberwise dimension} bundle over $X$ is the functor
\[
\epsilon_\pi \colon \exit(X)\xra{X\underset{K}\times X \rightleftarrows X} \Exit \overset{\sf dim}\longrightarrow \ZZ_{\geq 0} \xra{~\epsilon~} \Vect^{\sf inj}~.
\]
Unwinding Definition~\ref{def.mfd.vfr}, a \emph{vari-framing} of a constructible bundle $X\xra{\pi}K$ is an identification
\[
\varphi\colon \epsilon_\pi~\simeq~ \sT_\pi
\]
between the fiberwise dimension bundle and the fiberwise constructible tangent bundle.
In particular, \emph{vari-framing} of a stratified space $X$ is an identification 
\[
\varphi\colon \epsilon_X \simeq \sT_X
\]
between its constructible tangent bundle and its dimension constructible bundle.

\end{remark}

\begin{example}
For each $k\geq 0$, the \emph{hemispherically stratified $k$-disk} $\DD^k$ is equipped with a standard vari-framing.

\end{example}

\begin{theorem}[\cite{emb1a}]\label{theta.ff}
For each $n\geq 0$, there exist fully faithful functors
\begin{equation}
\btheta_n^{\op} \xra{\langle-\rangle} \cMfd_n^{\vfr}
 \qquad {\rm and} \qquad \btheta_{n,\emptyset}^{\op} \xra{\langle-\rangle} \Mfd_n^{\vfr}
\end{equation}
from Joyal's category and its based counterpart $\btheta_{n,\emptyset}$.
For each $0\leq k \leq n$, these functors evaluate as $\lag c_k\rag = \DD^k$.

\end{theorem}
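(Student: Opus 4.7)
The plan is to proceed by induction on $n$, leveraging the wreath-product definition $\bTheta_n^{\op} = \bDelta^{\op} \wr \bTheta_{n-1}^{\op}$ along with the recursive hemispherical decomposition of $\DD^n$ as a closed cone on the sphere $S^{n-1}$, which is itself stratified as two copies of $\DD^{n-1}$ glued along an equatorial $S^{n-2}$. The base case $n = 0$ is immediate since $\bTheta_0 = \ast$ and $\DD^0$ is the canonically vari-framed point. For the inductive step, each object of $\bTheta_n$ admits a presentation $([p], (T_1, \ldots, T_p))$ with $[p] \in \bDelta$ and $T_i \in \bTheta_{n-1}$; I would define $\langle T \rangle$ to be the stratified $n$-disk obtained by suitably gluing $p$ copies of the inductively constructed $\langle T_i \rangle$ inside $\DD^n$, endowed with the vari-framing inherited from the ambient Euclidean structure. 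Functoriality then follows from the universal property of the wreath product.

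For full faithfulness, morphisms in $\bTheta_n$ admit an inductive description via the wreath product, and I would show that this combinatorial data corresponds bijectively to the space of vari-framed constructible bundles $W \to \Delta^1$ with fibers $\langle T \rangle$ over $0$ and $\langle S \rangle$ over $1$. The main technical input is the factorization from Remark~\ref{rem.nofact}: because the hemispherical disks are smooth on top-dimensional strata, Lemma~\ref{tau.factor} furnishes a contractible space of factorizations of each such bundle as a closed-creation morphism followed by an open embedding. Closed-creation morphisms recover the inert monomorphisms in $\bTheta_n$, corresponding to refinements of stratification; open embeddings between vari-framed hemispherical disks are rigidly constrained by their induced behavior on equators, and correspond to active morphisms of pasting diagrams.

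The main obstacle will be establishing this rigidity: showing that $\Map_{\cMfd_n^{\vfr}}(\langle T \rangle, \langle S \rangle)$ is homotopy-discrete, and in particular that $\Aut(\langle T \rangle)$ is contractible. This reduces, via the cone equivalence $\Aut(\sC(L)) \simeq \Aut(L)$ recalled from \cite{aft1} combined with the link decomposition, to the analogous statement for the equatorial sphere, accessible by induction on dimension. The vari-framing is essential here: without it each smooth cell would contribute an $\sO(k)$-worth of rotational automorphisms, but the vari-framing trivializes all tangent bundles and reduces automorphisms to combinatorial symmetries, which the rigid stratified structure forces to be trivial.

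For the pointed variant $\bTheta_{n,\emptyset}^{\op} \to \Mfd_n^{\vfr}$, I would extend by sending the freely-adjoined zero object to the empty stratified space $\emptyset \in \Mfd_n^{\vfr}$; this is a zero object in the non-compact setting, since for any $Y \in \Mfd_n^{\vfr}$ the non-proper constructible bundle $Y \times (0,1] \to [0,1]$ provides the requisite morphisms to and from $\emptyset$, and the same argument as above then upgrades full faithfulness from the compact version.
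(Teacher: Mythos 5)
First, note that this paper does not prove Theorem~\ref{theta.ff} at all: it is imported from \cite{emb1a}, so there is no internal proof to compare against. Your proposal therefore has to stand on its own, and as written it has a genuine gap at its central step. You invoke Lemma~\ref{tau.factor} to get a contractible space of factorizations of a morphism between cellular realizations as a closed-creation morphism followed by an open embedding. But that lemma has two hypotheses, both of which fail in your situation. It requires the tangential structure $\tau$ to be \emph{stable} (a Cartesian fibration over $\Vect^{\sf inj}$), and the paper states explicitly that the vari-framing structure $[n]$ is not stable once $n\geq 2$; stability is what lets framings pull back along open morphisms (Observation~\ref{stable.ref.loc}), which is used essentially in the proof of Lemma~\ref{tau.factor}. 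It also requires the \emph{source} to be a smooth, i.e.\ trivially stratified, manifold, whereas the hemispherically stratified disks $\DD^j$ for $j\geq 1$, and the gluings $\langle T\rangle$ you build from them, have strata in every dimension up to $j$; being ``smooth on top-dimensional strata'' is not the hypothesis. The remark following Lemma~\ref{tau.factor} and Remark~\ref{rem.nofact} say exactly this: for a general stratified source the closed-creation/open factorization is not unique, since the space of such factorizations is not contractible. So the rigidity you need for full faithfulness cannot be extracted from Lemma~\ref{tau.factor}, and the proposal supplies no substitute; the actual argument (in \cite{emb1a}) has to exploit the vari-framing itself to rigidify mapping spaces, which is a genuinely different and substantially harder mechanism than the solidly framed, smooth-source situation that Lemma~\ref{tau.factor} addresses.

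Two further points are off. In $\cMfd_n^{\vfr}$ the morphisms are \emph{proper} constructible bundles over $\Delta^1$; the open-cylinder morphisms ${\sf Cylo}(U\hookrightarrow Y)$ you want as the second factor are typically non-proper (their total space fails to be compact unless the embedding is also closed), so the relevant second class in the compact setting is refinements rather than open embeddings, and your identification of closed-creation morphisms with ``inert monomorphisms corresponding to refinements of stratification'' conflates classes (closed and refinement) that the paper keeps distinct in Definition~\ref{def.classes}. Finally, for the pointed variant, sending the adjoined zero object to $\emptyset$ is the right move, but fully faithfulness into $\Mfd_n^{\vfr}$ is not ``the same argument as above'': $\Bun$ has strictly more morphisms than $\cBun$ (non-proper bundles that discard pieces of the fiber), and one must show these account exactly for the morphisms of $\bTheta_{n,\emptyset}$ that factor through the zero object; this needs its own analysis rather than a one-line upgrade.
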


\begin{definition}[Closed sheaves]\label{def.closed-cover}
Let $\tau$ be a tangential structure.
A limit diagram 
\[
\xymatrix{
X   \ar[r]   \ar[d] 
&
X''   \ar[d]
\\
X'   \ar[r]
&
X_0,
}
\]
in $\Bun^\tau$ is
\begin{itemize}
\item a \emph{purely closed cover} if the diagram is comprised of closed morphisms;
\item a \emph{refinement-closed cover} if the horizontal arrows are refinement morphisms while the vertical arrows are closed morphisms.
\item an \emph{embedding-closed cover} if the horizontal arrows are embedding morphisms while the vertical arrows are closed morphisms.
\end{itemize}
The limit diagram is a {\it closed cover} if it is either a purely closed cover or a refinement-closed cover. 
For $\cD^\tau\subset \Bun^\tau$ an $\infty$-subcategory, a functor from $\cD^\tau$ is a \emph{closed sheaf} if it carries closed covers to limit diagrams; the $\infty$-category of \emph{closed sheaves} on $\cD^\tau$ is the full $\infty$-subcategory
\[
\cShv(\cD^\tau)~\subset~\Fun(\cD^\tau , \Spaces)
\]
consisting of the closed sheaves.

\end{definition}

\begin{definition}\label{def.disk.vfr}
For each $n\geq 0$, the $\infty$-categories of \emph{disk-stratified vari-framed $n$-manifolds} and of \emph{disk-stratified compact vari-framed $n$-manifolds} are, respectively, the smallest full $\infty$-subcategories
\[
\bTheta_{n,\emptyset}^{\op}~\subset~\Disk_n^{\vfr}~\subset~\Mfd_n^{\vfr}
\qquad \text{ and } \qquad
\bTheta_n^{\op}~\subset~\cDisk_n^{\vfr}~\subset ~\cMfd_n^{\vfr}
\]
that are closed under the formation of all closed covers.

\end{definition}

Restriction and right Kan extension along the fully faithful functors in Definition~\ref{def.disk.vfr} define adjunctions
\begin{equation}\label{adjunctions}
\Fun(\Disk_n^{\vfr},\Spaces) ~\rightleftarrows~ \Fun(\bTheta_{n,\emptyset}^{\op},\Spaces)
\qquad\text{ and }\qquad
\Fun(\cDisk_n^{\vfr},\Spaces) ~\rightleftarrows~ \Fun(\bTheta_n^{\op},\Spaces)~.
\end{equation}

The following is a main result of~\cite{II}.  
\begin{theorem}[\cite{II}]\label{cats.are.sheaves}
For each $n\geq 0$, the adjunctions in~(\ref{adjunctions}) restricts as equivalences between $\infty$-categories:
\[
\cShv(\Disk_n^{\vfr})~\simeq~\fCat_n^{\ast/}
\qquad\text{ and }\qquad
\cShv(\cDisk_n^{\vfr})~\simeq~\fCat_n~.
\]
\end{theorem}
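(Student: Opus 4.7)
The strategy is to leverage Theorem~\ref{thm.flagged}, which identifies (pointed) flagged $(\infty,n)$-categories with (pointed) Segal sheaves on $\btheta_n$. Via the fully faithful functor $\langle-\rangle\colon \btheta_n^{\op}\hookrightarrow \cMfd_n^{\vfr}$ of Theorem~\ref{theta.ff}, which factors through $\cDisk_n^{\vfr}$ by Definition~\ref{def.disk.vfr}, both sides of the claimed equivalence restrict to presheaves on $\btheta_n^{\op}$, and the goal is to show that restriction along this functor implements the asserted equivalence. I treat the compact case; the pointed non-compact case proceeds identically, with the role of the zero object of $\btheta_{n,\emptyset}$ absorbed by the fact that the empty manifold is initial in $\Mfd_n^{\vfr}$.

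The first step is to show that restriction sends closed sheaves to Segal sheaves. I would verify that each Segal covering diagram $T_0 \to T', T''$ with pushout $T$ in $\btheta_n$—i.e., a pushout along inert morphisms—is sent by $\langle-\rangle$ to a refinement-closed cover of $\lag T\rag$ in $\cDisk_n^{\vfr}$ in the sense of Definition~\ref{def.closed-cover}. Geometrically, $\lag T\rag$ decomposes as $\lag T'\rag$ and $\lag T''\rag$ glued along the common closed subdisk $\lag T_0 \rag$; read as a square in $\cMfd_n^{\vfr}$, the horizontal arrows are refinements (restricting the stratification) while the verticals are closed inclusions. The verification proceeds by induction on $n$, matching the wreath-product structure $\btheta_n^{\op} = \bdelta^{\op} \wr \btheta_{n-1}^{\op}$ with the iterated cone construction on disk stratifications.

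The second step is to show that the right Kan extension of a Segal sheaf is a closed sheaf on $\cDisk_n^{\vfr}$. Here I use the defining property of $\cDisk_n^{\vfr}$ as the smallest full $\infty$-subcategory containing $\btheta_n^{\op}$ that is closed under closed covers. One argues by induction on the stage at which an object is adjoined: the right Kan extension from $\btheta_n^{\op}$ is forced to agree with the value given by the closed cover limit, and by induction this extension continues to satisfy the closed cover conditions at subsequent stages. The key geometric input is that a closed cover of disk-stratified vari-framed manifolds is built either from Segal-type refinement-closed gluings or from purely closed covers of the form $\partial c_i \simeq c_{i-1}\amalg_{\partial c_{i-1}} c_{i-1}$ coming from cell-boundary decompositions—both of which are encoded in the Segal condition on $\btheta_n$ by the colimit diagram~(\ref{4}).

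The main obstacle is the geometric identification underlying both directions: one must carefully control the vari-framing data on overlap strata to ensure that every closed cover of a disk-stratified vari-framed manifold admits a combinatorial lift to a pushout diagram in $\btheta_n$, imposing no constraints beyond the Segal condition. Once this generation statement is established, the unit and counit of the adjunction are equivalences on the relevant subcategories: the counit is an equivalence on Segal sheaves by fully faithfulness of $\langle-\rangle\colon\btheta_n^{\op}\hookrightarrow\cDisk_n^{\vfr}$, and the unit is an equivalence on closed sheaves because such a sheaf is determined by its restriction to $\btheta_n^{\op}$ through the inductive closure property of Definition~\ref{def.disk.vfr}.
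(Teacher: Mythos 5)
The paper does not actually prove this theorem: it is stated as a result cited from the reference \cite{II} (``Factorization homology II: closed sheaves,'' listed as in preparation), so there is no in-paper proof to compare against.

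Evaluated on its own terms, your proposal has a plausible shape---restrict and right Kan extend along the fully faithful inclusion $\btheta_n^{\op}\hookrightarrow\cDisk_n^{\vfr}$ of Theorem~\ref{theta.ff}, show the two operations land respectively in Segal sheaves and closed sheaves, then use fully faithfulness plus a generation argument---but essentially all of the mathematical content of the theorem is concentrated in the two verifications you defer. The claim that $\lag-\rag$ carries (opposites of) Segal covers to refinement-closed covers needs an actual inductive geometric argument through the wreath-product structure, tracking how the vari-framing restricts along the closed inclusion of the shared subdisk; you assert it but do not supply it. More seriously, the unit-equivalence step requires showing that the iterative closure of $\btheta_n^{\op}$ under closed covers in $\cMfd_n^{\vfr}$ imposes on sheaves \emph{no constraints beyond} the Segal ones; your appeal to the square~(\ref{4}) glosses over the fact that $\partial c_i$ is not an object of $\btheta_n$ (it is only a subfunctor of $c_i$), so one must actually produce, by transfinite induction on the stages defining $\cDisk_n^{\vfr}$, that each newly adjoined object receives a canonical closed cover by previously constructed ones whose limit condition is implied by the Segal condition. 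Without that inductive bookkeeping---and without checking that purely closed covers, refinement-closed covers, and their iterated compositions all reduce to Segal data---the argument is a roadmap rather than a proof. As the theorem is deferred to a separate paper, it is likely that the authors consider exactly this generation analysis to be the technical heart, and your proposal has correctly located but not discharged it.
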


Factorization homology on objects of $\cMfd_n^{\vfr}$ with coefficients in an $(\oo,n)$-category $\cC$ is then given by the left Kan extension from $\cDisk_n^{\vfr}$ to $\cMfd_n^{\vfr}$ of the closed sheaf associated to $\cC$.
A choice of object in $\cC$ determines an extension of factorization homology over possibly non-compact vari-framed $n$-manifolds.

\begin{definition}\label{def.fact.hom}
For each $n\geq 0$, \emph{factorization homology (for $(\infty,n)$-categories)} and \emph{factorization homology (for pointed $(\infty,n)$-categories)} is, respectively, right Kan extension followed by left Kan extension:
\[
\int\colon \fCat_n~ \overset{\sf RKan}\simeq~\cShv(\cDisk_n^{\vfr})\xra{~\sf LKan~} \Fun(\cMfd_n^{\vfr},\spaces)
\]
and
\[
\int\colon \fCat_n^{\ast/}~ \overset{\sf RKan}\simeq~\cShv(\Disk_n^{\vfr})\xra{~\sf LKan~} \Fun(\Mfd_n^{\vfr},\spaces)~.
\]
\end{definition}

\begin{theorem}[\cite{II}]\label{on.Rk}
The factorization homology functors $\int$ of Definition~\ref{def.fact.hom} are fully faithful, and the diagram among $\infty$-categories
\[
\xymatrix{
\fCat_n^{\ast/}  \ar[rr]^-{\int}  \ar[d]
&&
\Fun(\cMfd_n^{\vfr},\spaces)    \ar[d]
\\
\fCat_n  \ar[rr]^-{\int}
&&
\Fun(\cMfd_n^{\vfr},\spaces)
}
\]
canonically commutes.
For each pointed flagged $(\oo,n)$-category $\uno \in \cC$, 
its factorization homology $\int \cC$ evaluates as
\begin{equation}
\int_{\DD^k}\cC~ \simeq~ \cC(c_k) \qquad {\rm and} \qquad \int_{\RR^k}\cC~ \simeq~ \kEnd_{\cC}(\uno)
\end{equation}
where $\cC(c_k)$ is the space of $k$-morphisms of $\cC$ and $\kEnd_{\cC}(\uno)$ is the space of $k$-endomorphisms of the distinguished object $\uno\in \cC$ from Terminology~\ref{source.target}.
\end{theorem}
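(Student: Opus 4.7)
The plan is to reduce Theorem \ref{on.Rk} to two ingredients: the equivalences of Theorem \ref{cats.are.sheaves} identifying (pointed) flagged $(\infty,n)$-categories with closed sheaves on $\cDisk_n^{\vfr}$ (respectively $\Disk_n^{\vfr}$), and the behavior of left Kan extension along fully faithful embeddings of $\infty$-categories. By Theorem \ref{cats.are.sheaves}, the right Kan extension steps in Definition \ref{def.fact.hom} are already equivalences onto their closed-sheaf $\infty$-subcategories. The proof thus reduces to analyzing the subsequent left Kan extension along the fully faithful inclusions
\[
\cDisk_n^{\vfr} \hookrightarrow \cMfd_n^{\vfr} \qquad \text{and} \qquad \Disk_n^{\vfr} \hookrightarrow \Mfd_n^{\vfr}.
\]

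Full faithfulness then follows from the standard fact that left Kan extension along a fully faithful functor between $\infty$-categories is fully faithful: restriction along such an inclusion recovers the original functor because the relevant slice $\infty$-categories admit terminal objects (identities on objects of the subcategory). For the commutativity of the square, the forgetful functor $\fCat_n^{\ast/} \to \fCat_n$ corresponds under Theorem \ref{cats.are.sheaves} to restriction along $\cDisk_n^{\vfr} \hookrightarrow \Disk_n^{\vfr}$ (restricting a pointed closed sheaf to compact vari-framed disks). Since left Kan extensions compose naturally and restriction along $\cMfd_n^{\vfr} \hookrightarrow \Mfd_n^{\vfr}$ intertwines with the left Kan extensions over compact objects, the square commutes.

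For the evaluation at $\DD^k$: because $\DD^k \in \cDisk_n^{\vfr}$ by Theorem \ref{theta.ff} and left Kan extension along a fully faithful embedding preserves values on the subcategory, $\int_{\DD^k}\cC$ equals the value at $\DD^k$ of the closed sheaf associated to $\cC$. Under the identification $\bTheta_n^{\op} \hookrightarrow \cDisk_n^{\vfr}$ from Theorem \ref{theta.ff} and the right Kan extension equivalence from Theorem \ref{cats.are.sheaves}, this value is $\cC(c_k)$.

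For the evaluation at $\RR^k$: here the pointed structure is essential. The vari-framed manifold $\RR^k$ admits a colimit presentation in $\Mfd_n^{\vfr}$ realizing it as $\DD^k$ with the boundary $\partial \DD^k$ collapsed to the basepoint $\emptyset$; more precisely, as a pushout attaching two copies of $\emptyset$ along the source and target components of $\partial \DD^k$. Computing the left Kan extension over this presentation and using the values already obtained on disks yields
\[
\int_{\RR^k}\cC ~\simeq~ \{\mathsf{id}_\uno\} \underset{\cC(\partial c_k)}\times \cC(c_k),
\]
which by Terminology \ref{source.target} is $\kEnd_\cC(\uno)$. The main obstacle is precisely this last step: establishing the correct colimit decomposition of $\RR^k$ in $\Mfd_n^{\vfr}$ compatibly with vari-framings and the pointing $\emptyset$, and verifying that the closed-sheaf condition propagates through the decomposition to produce exactly the pullback defining $\kEnd$. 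This geometric computation is the essential content linking factorization homology over open $k$-Euclidean space to iterated endomorphism spaces.
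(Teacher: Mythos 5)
First, note that this theorem is attributed in the paper to the reference \cite{II} (``Factorization homology II: closed sheaves,'' listed as ``in preparation''); the present paper contains no proof of it, so there is no in-paper proof to compare against. I therefore evaluate your proposal on its own merits.

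Your full-faithfulness argument is sound: the unit of the adjunction $\sf LKan \dashv \sf res$ along the fully faithful inclusion $\cDisk_n^{\vfr}\hookrightarrow \cMfd_n^{\vfr}$ is an equivalence, so the composite $\fCat_n\simeq\cShv(\cDisk_n^{\vfr})\hookrightarrow\Fun(\cDisk_n^{\vfr},\Spaces)\xra{\sf LKan}\Fun(\cMfd_n^{\vfr},\Spaces)$ is fully faithful, and likewise for the pointed version. The $\DD^k$ evaluation is also correct: since $\DD^k=\langle c_k\rangle$ lies in $\cDisk_n^{\vfr}$ (Theorem~\ref{theta.ff}), left Kan extension along the fully faithful inclusion returns the value of the right Kan extension, which is $\cC(c_k)$.

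However, the evaluation at $\RR^k$ has a genuine gap beyond what you acknowledge. You posit a \emph{pushout} presentation of $\RR^k$ in $\Mfd_n^{\vfr}$ and claim that computing the left Kan extension ``over this presentation'' yields the \emph{pullback} $\{\mathsf{id}_\uno\}\times_{\cC(\partial c_k)}\cC(c_k)$. This mismatch is not a bookkeeping quibble: the left Kan extension $\int\cC$ is a colimit over the slice $(\Disk_n^{\vfr})_{/\RR^k}$, and a functor obtained by left Kan extension has no reason to send colimit diagrams in $\Mfd_n^{\vfr}$ to colimits, and certainly not to limits. To produce the stated pullback you need instead a \emph{closed cover} in the sense of Definition~\ref{def.closed-cover} --- a \emph{limit} diagram in $\Mfd_n^{\vfr}$ built from closed/refinement/embedding morphisms, with $\RR^k$ in the initial corner --- together with the substantial claim that the left Kan extension $\int\cC$ is a closed sheaf on $\Mfd_n^{\vfr}$ (not merely on $\Disk_n^{\vfr}$). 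That latter claim is not formal and is precisely the sort of ``closed sheaf'' result that \cite{II} is expected to establish; it cannot be waved away as a colimit computation. Alternatively, one could try to show $\RR^k\in\Disk_n^{\vfr}$ itself (as a closed cover of objects of $\bTheta_{n,\emptyset}^{\op}$ and $\emptyset$), in which case the left Kan extension trivially returns the right-Kan-extension value, and the pullback formula follows from the closed-sheaf property on $\Disk_n^{\vfr}$; but your proposal does not take this route, and it too requires verifying the specific closed-cover diagram.

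The commutativity of the square is likewise not automatic: it is a Beck--Chevalley statement asserting that restriction along $\cMfd_n^{\vfr}\hookrightarrow\Mfd_n^{\vfr}$ composed with $\sf LKan$ from $\Disk_n^{\vfr}$ agrees with $\sf LKan$ from $\cDisk_n^{\vfr}$ applied to the restricted sheaf. Restriction and left Kan extension do not commute in general; one needs a cofinality statement comparing the slice $\infty$-categories $(\cDisk_n^{\vfr})_{/M}\to(\Disk_n^{\vfr})_{/M}$ for $M$ compact. Saying the two operations ``intertwine'' is not an argument.
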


\subsection{Factorization homology with adjoints}\label{sec.adj}

We expect a similar story to that described in the preceding sections involving $(\oo,n)$-categories \emph{with adjoints} and a form of factorization homology based on \emph{solid $n$-framings}. 
To define solid $n$-framings, we use the forgetful functor
\begin{equation}\label{sfr}
\Vect^{\sf inj}_{/\RR^n} \ra \Vect^{\sf inj}
\end{equation}
from the $\oo$-overcategory of $\RR^n$.

\begin{definition}\label{def.sfr}
For $n\geq 0$, the respective $\infty$-categories of \emph{solidly $n$-framed manifolds} and of \emph{solidly $n$-framed compact manifolds} are
\[
\Mfd^{\sfr}_n~:=~\Bun^{\Vect^{\sf inj}_{/\RR^n}}
\qquad\text{ and }\qquad
\cMfd_n^{\sfr}~:=~\cBun^{\Vect^{\sf inj}_{/\RR^n}}~,
\]
where ${\Vect^{\sf inj}_{/\RR^n}}$ is regarded as a tangential structure via~(\ref{sfr}).

\end{definition}

For each $n\geq 0$, the evident functor $[n]\to \Vect^{\sf inj}_{/\RR^n}$ over $\Vect^{\sf inj}$ determines functors among $\infty$-categories:
\begin{equation}\label{2}
\Mfd^{\vfr}_n\longrightarrow  \Mfd^{\sfr}_n
\qquad\text{ and }\qquad
\cMfd^{\vfr}_n  \longrightarrow  \cMfd^{\sfr}_n~.
\end{equation}

\begin{definition}[Adjoints]\label{def.adjoints}
Let $\cC$ be an $(\infty,n)$-category.  
Let $c_k\xra{L} \cC$ select a $k$-morphism in $\cC$, written through Terminology~\ref{source.target} as $x\xra{L} y$, where $x$ and $y$ are $(k-1)$-morphisms with common source-target.
This $k$-morphism $L$ is a \emph{left adjoint} if there is a $k$-morphism $y\xra{R}x$ in $\cC$ (the \emph{right adjoint}) for which the following exists.
\begin{itemize}
\item A \emph{unit} $(k+1)$-morphism in $\cC$ and a \emph{counit} $(k+1)$-morphism in $\cC$: 
\[
\eta\colon {\sf id}_x \longrightarrow R\circ L
\qquad \text{ and }\qquad
\epsilon \colon L\circ R \longrightarrow {\sf id}_y~.
\]

\item 
Identifications of the composite $(k+1)$-morphisms in $\cC$:
\[
{\sf id}_L \colon L\xra{~L(\eta)~} L\circ R \circ L \xra{~\epsilon_L~}L
\qquad\text{ and }\qquad
{\sf id}_R\colon R \xra{~\eta_R~} R\circ L \circ R \xra{~R(\epsilon)~} R~.
\]

\end{itemize}
The $\infty$-category of \emph{$(\infty,n)$-categories with adjoints} is the full $\infty$-subcategory
\[
\Cat_n^{\adj}~\subset~ \Cat_n
\]
consisting of those $(\oo,n)$-categories in which, for each $0<k<n$, each $k$-morphism is both a left adjoint and a right adjoint.
\end{definition}

In this work we assume the following conjecture, which asserts a theory of factorization homology with adjoints.

\begin{conj}\label{conj.one.again}
There exists a fully faithful functor $\int\colon (\Cat_n^{\sf adj})^{\ast/}\hookrightarrow\Fun(\Mfd_n^{\sfr}, \Spaces)$
from pointed $(\oo,n)$-categories with adjoints to space-valued functors on solidly $n$-framed stratified manifolds with respect to which the diagram
\[
\xymatrix{
(\Cat_n^{\sf adj})^{\ast/}\ar[rr]^-{\int}\ar[d]&&\Fun(\Mfd_n^{\sfr}, \Spaces)\ar[d]\\
\Cat_n^{\ast/}\ar[rr]^-\int &&\Fun(\Mfd_n^{\vfr},\Spaces)}
\]
canonically commutes.
\end{conj}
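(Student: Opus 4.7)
The plan is to mimic the strategy that yields Theorem~\ref{on.Rk}, adapting each step from vari-framings to solid $n$-framings. First I would introduce the full $\infty$-subcategory $\cDisk_n^{\sfr} \subset \cMfd_n^{\sfr}$, along with its pointed, possibly non-compact analogue $\Disk_n^{\sfr} \subset \Mfd_n^{\sfr}$, defined as the smallest full $\infty$-subcategories containing the hemispherical disks $\DD^k$ equipped with their various solid $n$-framings and closed under the formation of closed covers in the sense of Definition~\ref{def.closed-cover}. Because a solid $n$-framing of $\DD^k$ is an injection $\sT_{\DD^k} \hookrightarrow \underline{\RR^n}$ of constructible tangent bundles, these disks carry strictly more data than the vari-framed disks of $\cDisk_n^{\vfr}$: at each stratum they record an $(n-k)$-dimensional \emph{normal} frame in $\RR^n$, and rotations of this normal frame along incident strata are what will encode adjunction data.

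The central step is the adjoint analogue of Theorem~\ref{cats.are.sheaves}, namely the equivalences of $\infty$-categories
\[
\cShv(\Disk_n^{\sfr}) ~\simeq~ (\Cat_n^{\adj})^{\ast/}
\qquad\text{ and }\qquad
\cShv(\cDisk_n^{\sfr}) ~\simeq~ \Cat_n^{\adj}~.
\]
The functors $\cDisk_n^{\vfr} \to \cDisk_n^{\sfr}$ induced by~(\ref{2}) pull back closed sheaves to closed sheaves, so restriction recovers the underlying $(\infty,n)$-category from any closed sheaf on $\cDisk_n^{\sfr}$. The upgrade to adjoints is witnessed by a distinguished family of solidly framed disks: for each $0<k<n$, the $k$-disk $\DD^k$ carrying a solid $n$-framing whose tangent frame rotates by a half-turn within a chosen coordinate $2$-plane involving the $(k{+}1)^{\rm st}$ normal direction of $\RR^n$ defines, under any closed sheaf, a morphism whose source and target are opposite $k$-morphisms; the closed-cover relations among such rotating disks that are forced by excision give rise to the unit, the counit, and the triangle (and higher swallowtail) identities. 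The inverse construction would assign to each $\cC \in (\Cat_n^{\adj})^{\ast/}$ a closed sheaf on $\Disk_n^{\sfr}$ by sending each solidly framed disk to the composite unit/counit morphism its framing encodes, with the adjointness assumption ensuring that the assignment is well-defined and sheafy.

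Granted this equivalence, the factorization homology functor and its full faithfulness are formal. Define
\[
\int\colon (\Cat_n^{\adj})^{\ast/} ~\overset{\sf RKan}\simeq~ \cShv(\Disk_n^{\sfr}) \xra{~\sf LKan~} \Fun(\Mfd_n^{\sfr},\Spaces)~,
\]
and likewise in the compact setting; full faithfulness then follows by the same left-Kan-extension argument as in Theorem~\ref{on.Rk}, using that $\Disk_n^{\sfr}$ sits fully faithfully in $\Mfd_n^{\sfr}$ and detects closed covers. The asserted commutative square with vari-framed factorization homology follows from the universal property of left Kan extension together with the naturality of the restriction $\cShv(\Disk_n^{\sfr}) \to \cShv(\Disk_n^{\vfr})$ induced by the canonical functor $[n] \to \Vect^{\sf inj}_{/\RR^n}$ of tangential structures.

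The main obstacle is the categorical identification of closed sheaves on solidly framed disks with $(\infty,n)$-categories with adjoints. Concretely, one must show that the combinatorics of closed covers among solidly framed $(k{+}1)$-disks whose tangent-frame rotations are supported in different coordinate planes yield exactly the coherence data of a fully adjoint $k$-morphism, at every level $0<k<n$, and no more. This is a form of parametrized framed Morse theory for solidly framed stratified disks: handle decompositions of solidly framed manifolds classify composites of units and counits, and one must verify that the relations among such handle decompositions are all generated by the closed covers in $\Disk_n^{\sfr}$. This is the content deferred to the forthcoming work on which Conjecture~\ref{conj.one} depends.
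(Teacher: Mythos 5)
You should first note that the paper contains no proof of this statement: it is precisely Conjecture~\ref{conj.one.again} (a restatement of Conjecture~\ref{conj.one}), which the authors explicitly \emph{assume}, remarking that a proof is expected to appear in forthcoming work. So there is nothing in the paper to compare your argument against; the most one can do is compare your outline with the programmatic sketch in the introduction, where the authors conjecture a functor $\ov{\fC}\colon (\Mfd_n^{\sfr})^{\op}\to \Cat_n^{\ast/}$ factoring through pointed $(\infty,n)$-categories with adjoints, from which factorization homology with adjoints is built. Your outline is consistent with that program: solidly framed disk subcategories, a closed-sheaf characterization, and the ${\sf RKan}$-then-${\sf LKan}$ definition of $\int$ mirror Definitions~\ref{def.disk.vfr} and~\ref{def.fact.hom} and Theorems~\ref{cats.are.sheaves} and~\ref{on.Rk}.

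As a proof, however, the proposal has a genuine and decisive gap, which you yourself flag: the central equivalence $\cShv(\Disk_n^{\sfr})\simeq (\Cat_n^{\adj})^{\ast/}$ is not argued but only motivated by the heuristic of normal frames rotating by half-turns, and you defer its verification (``parametrized framed Morse theory for solidly framed stratified disks'') to the very forthcoming work the conjecture depends on. That identification \emph{is} the mathematical content of the conjecture; with it granted, nothing of substance remains to prove, and without it the rest of your argument does not get off the ground. Two further steps are also over-claimed as formal. First, full faithfulness of the left Kan extension to $\Fun(\Mfd_n^{\sfr},\Spaces)$ is not ``the same left-Kan-extension argument as in Theorem~\ref{on.Rk}'': in the vari-framed case the full faithfulness of $\int$ is a main theorem of the cited works \cite{emb1a} and \cite{II}, not a formal consequence of $\Disk_n^{\vfr}\subset \Mfd_n^{\vfr}$ being fully faithful, and the solidly framed analogue would require its own nontrivial argument (indeed $\Mfd_n^{\sfr}$ has more morphisms, so detecting equivalences after extension is if anything harder). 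Second, the claim that restriction along $\cDisk_n^{\vfr}\to\cDisk_n^{\sfr}$ carries closed sheaves to closed sheaves, and that this restriction recovers the underlying $(\infty,n)$-category compatibly with the two Kan extensions so that the asserted square commutes, requires checking that the functor of tangential structures $[n]\to \Vect^{\sf inj}_{/\RR^n}$ preserves the relevant closed covers and interacts correctly with both Kan extensions; this is plausible but is part of what must be established, not a consequence of naturality alone. In short, your write-up is a faithful road map of the intended strategy, but it does not close the conjecture: the key lemma is assumed, exactly as the paper assumes it.
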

We refer to both of these horizontal functors as \emph{factorization homology}.

\section{The tangle hypothesis}

Fix $0<k\leq n$.
In this section we construct the pointed flagged $(\infty,n+k)$-category
\[
\Tang_{n\subset n+k}^{\fr}
\]
of framed codimension-$k$ tangles.
We construct $\Tang_{n\subset n+k}^{\fr}$ in the following steps.
\begin{enumerate}
\item As Definition~\ref{def.fr.tang}, we construct an $\oo$-category $\fTang_{n\subset n+k}^{\fr}$ over $\Mfd_{n+k}^{\sfr}$ for which an object over $D\in \Mfd_{n+k}^{\sfr}$ is a codimension-$k$ tangle in $D$ with framing data.
\item In Corollary \ref{cor.actuallyleft}, we show the functor of $\oo$-categories $\fTang_{n\subset n+k}^{\fr}\ra \Mfd_{n+k}^{\sfr}$ is a left fibration; its straightening is then a functor
\[
\Tang_{n\subset n+k}^{\fr}\colon \Mfd_{n+k}^{\sfr} \longrightarrow\Spaces
~,\qquad
D\mapsto (\fTang_{n\subset n+k}^{\fr})_{|D}~.
\]
To do so, we establish an equivalence over $\Mfd_{n+k}^{\sfr}$:
\[
\fTang_{n\subset n+k}^{\fr}~ \simeq~ (\Mfd_{n+k}^{\sfr})^{\RR^k/}~.
\]
This equivalence identifies the functor $\Tang_{n\subset n+k}^{\fr}\colon \Mfd_{n+k}^{\sfr}\ra \Spaces$ as that corepresented by the object $\RR^k\in \Mfd_{n+k}^{\sfr}$~.
\item In Corollary \ref{tang.cat}, we show that the space-valued functor $\Tang_{n\subset n+k}^{\fr}$ is a closed sheaf.
It follows that the restriction of the closed sheaf $\Tang_{n\subset n+k}^{\fr}$ along $\btheta^{\op}_{n+k,\emptyset} \hookrightarrow \Mfd_{n+k}^{\vfr}\ra \Mfd_{n+k}^{\sfr}$ is a Segal sheaf on $\btheta^{\op}_{n+k,\emptyset}$, which is a pointed flagged $(\oo,n+k)$-category.
Through~\S\ref{sec.adj}, this pointed flagged $(\infty,n+k)$-category has adjoints.
The tangle hypothesis is a direct compilation of the above points.
\end{enumerate}

\subsection{The $\oo$-category of tangles $\fTang_{n\subset n+k}^{\fr}$}

Consider the standard linear inclusion $\RR^k\subset\RR^{n+k}$ defined by selecting the first $k$ coordinates. This defines a solid $(n+k)$-framing on $\RR^k$.

For the next definition, we denote the full $\infty$-subcategories
\[
\Ar^{\sf cls.crt}(\Mfd_{n+k}^{\sfr})~\subset~\Ar(\Mfd_{n+k}^{\sfr})~\supset~\Ar^{\sf emb}(\Mfd_{n+k}^{\sfr})
\]
consisting respectively of the \emph{closed-creation} arrows in $\Mfd_{n+k}^{\sfr}$ and of the \emph{embedding} arrows in $\Mfd_{n+k}^{\sfr}$, the latter which are understood as open stratified embeddings.
\begin{definition}\label{def.fr.tang}
The $\infty$-category of \emph{framed codimension-$k$ tangles} is the $\infty$-category 
\[
\fTang_{n\subset n+k}^{\fr} 
\longrightarrow
\Mfd_{n+k}^{\sfr}
\]
as in the pullback diagram among $\infty$-categories:
\[
\xymatrix{
\fTang_{n\subset n+k}^{\fr}  \ar[rr]  \ar[d]
&
&
\Ar^{\sf cls.crt}(\Mfd_{n+k}^{\sfr})\underset{\Mfd_{n+k}^{\sfr}}\times \Ar^{\sf emb}(\Mfd_{n+k}^{\sfr})  \ar[d]^-{\ev_s\times \ev_t}
\\
\Mfd_{n+k}^{\sfr} \ar[rr]^-{(\{\RR^k\} , {\sf id} )}
&
&
\Mfd_{n+k}^{\sfr}  \times \Mfd_{n+k}^{\sfr}   .
}
\]
\end{definition}

Equivalently, the $\oo$-category of tangles is
\[
\fTang_{n\subset n+k}^{\fr} ~:=~
(\Mfd_{n+k}^{\sfr})^{\RR^k/^{\sf cls.crt}}\underset{\Mfd_{n+k}^{\sfr}}\times \Ar^{\sf emb}(\Mfd_{n+k}^{\sfr}) ~,
\]
where $(\Mfd_{n+k}^{\sfr})^{\RR^k/^{\sf cls.crt}} \subset (\Mfd_{n+k}^{\sfr})^{\RR^k/}$ is the full $\oo$-subcategory of the $\oo$-undercategory consisting of the closed-creation morphisms from $\RR^k$.
In the next remark, we justify the terminology of tangles in this definition.

\begin{remark}\label{rem.clscrtEuc}
By definition, $\fTang_{n\subset n+k}^{\fr}$ is the $\oo$-category of composable morphisms
\[
\RR^k\xra{\sf cls.crt} N \xra{\emb} D
\]
in $\Mfd_{n+k}^{\sfr}$, in which the first morphism is a closed-creation and the second morphism is an open stratified embedding. 
The functor $\fTang_{n\subset n+k}^{\fr}\ra \Mfd_{n+k}^{\sfr}$ carries such an object $\{\RR^k\ra N \ra D\}$ to its target $D$.
Now, as established in~\S6 of~\cite{striat}, each closed-creation morphism $f\colon M\to N$ in $\Bun$ from a smooth manifold $M$ is the reversed mapping cylinder on a proper stratified fiber bundle $M \xla{\pi} N$.
In the case that $M=\RR^k$, such a fiber bundle is canonically trivial:
\[
(\RR^k\xra{f} N)~\simeq~ {\sf Cylr}(\RR^k \xla{~\pr~} \RR^k\times W)~,
\qquad
\text{ where }W:=\pi^{-1}(0)~.
\]
Consequently, an object in $\fTang_{n\subset n+k}^{\fr}$ over $D\in \Mfd_{n+k}^{\sfr}$ is a pair
$(W,\RR^k\times W \hookrightarrow D)$ consisting of a compact stratified space $W$ equipped with a solid $n$-framing, together with a solidly $(n+k)$-framed open stratified embedding $\RR^k\times W\hookrightarrow D$.  
The data of this $(n+k)$-framed open stratified embedding forgets to that of a so-called \emph{framed codimension-$k$ tangle in $D$}:
\begin{itemize}
\item A stratified embedding $W\hookrightarrow D$ satisfying the following conditions.
\begin{itemize}
\item The stratified space $W$ is compact.
\item The stratification of $W$ is pulled back along this embedding from that of $D$.
\item The embedding admits a tubular neighborhood of rank $k$.  
This is to say that the embedding extends to an open embedding from the total space of a rank $k$ conically smooth vector bundle over $W$.
\end{itemize}
\item A splitting of the $(n+k)$-framing of $D$ along $W$.
This is the data of a lift among functors from $\exit(W)$ to $\Vect^{\sf inj}$:
\[
\xymatrix{
\sT_W  \ar@{-->}[rr]  \ar[d]
&&
\epsilon^n_W  \ar[d]^-{\sf last}
\\
(\sT_D)_{|W}  \ar[rr]^-{\varphi_{|W}}
&&
\epsilon^{k+n}_{W},
}
\]
where $\varphi$ is the given solid $(n+k)$-framing of $D$.
\end{itemize}
Because the space of tubular neighborhoods of a stratified subspace $W\subset D$ is empty or contractible, and because the space of solid framings pullback along open embeddings, the data of the pair $(W,\RR^k\times W \hookrightarrow D)$ is equivalent to that of a framed codimension-$k$ tangle in $D$.
In this way, for each solidly $(n+k)$-framed stratified space $D$, we interpret the fiber space of the projection $\fTang_{n\subset n+k}^{\fr} \to \Mfd_{n+k}^{\sfr}$ over $D$ as a moduli space
\[
(\fTang_{n\subset n+k}^{\fr} )_{|D}~\simeq~ \Bigl| \Bigl\{W\overset{\rm framed}{\underset{{\rm codim\text{-}}k}\hookrightarrow} D  \Bigr\} \Bigr|
\]
of framed codimension-$k$ tangles in $D$.

\end{remark}

\subsection{Constructible bundles $X\ra \Delta^1$ whose special fiber $X_0$ is a manifold}
The following lemma is the desired result of this section, which we will prove in the more general form of Lemma~\ref{tau.factor}.

\begin{lemma}\label{lemma.corep}
Composition defines an equivalence between $\oo$-categories over $\Mfd_{n+k}^{\sfr}$:
\[
\fTang_{n\subset n+k}^{\fr} \xra{~\simeq~} (\Mfd_{n+k}^{\sfr})^{\RR^k/}
~,\qquad
(\RR^k\underset{\sf cls.crt}{\xra{~\pi~}} N \underset{\sf emb}{\xra{~\gamma~}} D)\mapsto (\RR^k\xra{\gamma\circ \pi} D)~.
\]

\end{lemma}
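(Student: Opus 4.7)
The plan is to reduce Lemma~\ref{lemma.corep} to the more general factorization statement (Lemma~\ref{tau.factor}) promised in Remark~\ref{rem.nofact}, which asserts that a morphism in $\Bun^{\tau}$ whose source is a smooth $\tau$-framed manifold admits an essentially unique factorization as a closed-creation morphism followed by an embedding morphism. Unpacking Definition~\ref{def.fr.tang}, the composition functor realizes $\fTang_{n\subset n+k}^{\fr}$ as the subcategory of $(\Mfd_{n+k}^{\sfr})^{\RR^k/} \times_{\Mfd_{n+k}^{\sfr}} \Ar(\Mfd_{n+k}^{\sfr})$ on factorizations through a closed-creation followed by an embedding; so the task is to show that, over each simplex of $\Mfd_{n+k}^{\sfr}$, the space of such factorizations of a given morphism $\RR^k \to D$ is contractible.

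First I would construct one factorization using the link construction of~\cite{aft1} applied to the constructible bundle $X \to \Delta^1$ representing a morphism $\RR^k \to D$; this produces the canonical factorization
\[
\RR^k \underset{\sf cls.crt}{\longrightarrow} \Link_{\RR^k}(X) \underset{\sf open}{\longrightarrow} D.
\]
Because $\RR^k$ is a smooth manifold, it carries no proper substrata to refine, and so the link identifies with the total space of a conically smooth vector bundle realized as a tubular neighborhood of $\RR^k$ in $D$; consequently the second arrow is an open embedding and not merely an open refinement. Theorem~\ref{bun.tau.exists} then lifts this factorization uniquely to $\Mfd_{n+k}^{\sfr}$ since solid framings pull back along all morphisms of $\Bun$.

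Next I would establish essential uniqueness. Given any factorization $\RR^k \xra{\sf cls.crt} N \xra{\gamma,\,\sf emb} D$, Theorem~\ref{thm.class.maps} and the classification of closed-creation morphisms out of a smooth manifold (\S6 of~\cite{striat}) identify $N$ with the reversed mapping cylinder $\cylr(\RR^k \xla{\pi} N_+)$ on a proper constructible bundle $\pi$; the open embedding $\gamma$ then exhibits $N_+$ as a tubular neighborhood of $\RR^k$ in $D$. The usual contractibility of the space of tubular neighborhoods of a (conically smooth) stratified subspace, combined with the fact that the germ of $D$ along $\RR^k$ is recovered canonically from the link, yields contractibility of the space of factorizations in $\Bun$. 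Compatibility of the solid $(n+k)$-framing promotes this to contractibility in $\Mfd_{n+k}^{\sfr}$.

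The main obstacle will be verifying this contractibility not merely at the level of objects but coherently at every simplicial level, so that the composition functor is an equivalence of $\infty$-categories and not just an equivalence on underlying groupoids. This amounts to showing that the link construction is functorial for families of constructible bundles over $\Delta^p$ with $X_{|0}$ a smoothly-varying family of manifolds, and that the associated parametrized space of tubular neighborhoods is contractible uniformly in $p$; this is precisely the content to be isolated as Lemma~\ref{tau.factor}.
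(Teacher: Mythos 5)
Your treatment of Lemma~\ref{lemma.corep} itself is exactly the paper's: both reduce it to the general factorization statement Lemma~\ref{tau.factor}, specialized to the stable tangential structure $\Vect^{\sf inj}_{/\RR^{n+k}}$ and $M=\RR^k$; the paper's proof is literally this one-line reduction.

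Your sketch of Lemma~\ref{tau.factor} is broadly in the paper's spirit---construct a factorization via $\Link_{\RR^k}(X)$ and use smoothness of $\RR^k$ to upgrade the open leg to an embedding---but diverges in two places worth noting. First, for uniqueness you invoke contractibility of the space of tubular neighborhoods; the paper instead establishes uniqueness by taking iterated links of a second factorization $F\to\oC(K\times\Delta^1)$ and showing the comparison maps are forced to be equivalences, an argument that works coherently over an arbitrary compact test object $K$ and so gives the full $\infty$-categorical equivalence (the issue you yourself flag as the ``main obstacle'' at the end). Second, the claim that ``solid framings pull back along all morphisms of $\Bun$'' is too strong: Observation~\ref{stable.ref.loc} gives Cartesian lifts only along \emph{open} morphisms, using that the tangential structure is a Cartesian fibration over $\Vect^{\sf inj}$, and the $\tau$-structure on the closed-creation leg is produced by a separate argument (the fiberwise $\tau$-structure on $X\to\oC(K)$ restricts along the reversed cylinder, not via pullback). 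Neither point affects the correctness of your reduction of Lemma~\ref{lemma.corep}, but they would need to be repaired in a full proof of Lemma~\ref{tau.factor}.
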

\begin{proof}
Apply Lemma \ref{tau.factor} in the special case $\tau=\sfr_n$ and $M= \RR^k$.
\end{proof}

For $\tau$ a tangential structure, composition in $\Bun^\tau$ defines a functor over $\Bun^\tau\times \Bun^\tau$:
\[
\xymatrix{
\Ar^{\sf cls.crt}(\Bun^\tau)     \underset{\Bun^\tau} \times \Ar^{\sf emb}(\Bun^\tau)     \ar[rd]_-{\ev_s\times \ev_t}   \ar[rr]^-{\sf comp}
&&
\Ar(\Bun^\tau)      \ar[dl]^-{\ev_s\times \ev_t}
\\
&
\Bun^\tau\times \Bun^\tau
&
.
}
\]
For each object $M\in \Bun^\tau$, base change of this diagram along $\Bun^\tau \xra{(\{M\} , {\sf id})} \Bun^\tau \times \Bun^\tau$ gives a functor over $\Bun^\tau$:
\begin{equation}\label{3}
\xymatrix{
(\Bun^\tau)^{M/^{\sf cls.crt}}    \underset{\Bun^\tau} \times \Ar^{\sf emb}(\Bun^\tau)    \ar[rd]_-{\ev_t}   \ar[rr]^-{{\sf comp}}
&&
(\Bun^\tau)^{M/}    \ar[dl]^-{ \ev_t}
\\
&
\Bun^\tau
&
.
}
\end{equation}

\begin{definition}\label{def.stable.tau}
A tangential structure $\tau$ is \emph{stable} if the functor $\tau \to \Vect^{\sf inj}$ is a \emph{Cartesian fibration}.

\end{definition}

\begin{example}
Let $n\geq 0$.
Being the projection from an $\infty$-overcategory, the tangential structure $\Vect^{\sf inj}_{/\RR^n}$ is stable.
In contrast, the tangential structure $[n]$ is not stable (unless 
$n\leq 1$
).

\end{example}

\begin{observation}\label{stable.ref.loc}
Let $\tau$ be a tangential structure.  
Let $X_0 \xra{\gamma} X_1$ be an open morphism in $\Bun$, and let $\w{X}_1\in \Bun^\tau$ be a lift of the target of $\gamma$.
Provided $\tau$ is stable, there exists a Cartesian lift of $\gamma$ whose target is $\w{X}_1$.

\end{observation}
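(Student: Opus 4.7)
My plan is to produce the Cartesian lift explicitly by representing the open morphism $\gamma$ as a constructible bundle over $\Delta^1$ via the Cylo construction of Theorem~\ref{thm.class.maps}, and then transporting the given $\tau$-framing $\w{X}_1$ backward along ``time-direction'' morphisms using the Cartesian fibration property of $\tau \to \Vect^{\sf inj}$.

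First I would realize $\gamma\colon X_0 \to X_1$ as the constructible bundle $Z := {\sf Cylo}(X_0 \xra{\gamma} X_1) \to \Delta^1$, whose fiber over $0$ is $X_0$ and whose fiber over $(0,1]$ is $X_1\times(0,1]$. Through Definitions~\ref{def.Bun.tau} and~\ref{def.vfr}, a morphism in $\Bun^\tau$ over $\gamma$ with target $\w{X}_1$ is equivalent data to a lift $\w{Z}\colon \exit(Z) \to \tau$ of the fiberwise constructible tangent bundle $\sT_\pi\colon \exit(Z)\to \Vect^{\sf inj}$ whose restriction to $\exit(X_1)\subset\exit(Z)$ equals $\w{X}_1$. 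Thus the entire problem reduces to this lifting problem.

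The crux is the observation that, since $\gamma$ is an open morphism (an open embedding of underlying topological spaces that is a refinement onto its image), every time-direction exit path in $Z$ from a point $x\in X_0\subset Z$ to its image in $X_1\times(0,1]\subset Z$ is sent by $\sT_\pi$ to an \emph{isomorphism} in $\Vect^{\sf inj}$: the stratified derivative of a refinement or open embedding is a linear isomorphism of tangent spaces. Using this together with the stability hypothesis, I would construct $\w{Z}$ as follows. For each $x\in\exit(X_0)$, transport $\w{X}_1(\gamma(x))$ backward along the time-direction iso to obtain a Cartesian lift at $x$; then extend to morphisms internal to $\exit(X_0)$ by taking Cartesian lifts along the corresponding injections in $\Vect^{\sf inj}$. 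The universal property of Cartesian morphisms assembles these pointwise choices into a functor, yielding the desired lift $\w{Z}$ and hence a morphism $\w{\gamma}\colon \w{X}_0 \to \w{X}_1$ in $\Bun^\tau$ over $\gamma$; its Cartesian property over $\Bun$ then reduces, by unwinding the characterization of $\Bun^\tau$ in terms of $\tau$-framings of constructible bundles over simplices, to the Cartesian property of the constituent lifts in $\tau\to\Vect^{\sf inj}$.

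The main obstacle I expect is the coherent functoriality just sketched: showing that the pointwise backward-transports along time-direction isomorphisms mesh compatibly with the Cartesian lifts along internal morphisms of $\exit(X_0)$ to assemble into a single functor $\w{Z}\colon \exit(Z)\to \tau$, and that the resulting $\w{\gamma}$ satisfies the Cartesian universal property against arbitrary morphisms in $\Bun^\tau$ lying over factorizations of $\gamma$. This coherence is ultimately forced by the universal properties in a Cartesian fibration, but verifying it rigorously requires interleaving that property with the structure of the exit-path $\infty$-category of a Cylo and with the Segal-sheaf characterization of $\Bun^\tau$ as a presheaf on stratified spaces.
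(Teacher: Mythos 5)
The paper gives no proof of this Observation (it is stated as a one-liner), so there is nothing to compare against directly; your overall strategy---realize $\gamma$ via ${\sf Cylo}$ as a constructible bundle over $\Delta^1$, recast the problem as lifting $\sT_\pi \colon \exit(Z)\to\Vect^{\sf inj}$ through $\tau$, and use stability of $\tau$ to transport the given lift backward from $\exit(X_1)$---is indeed the right way to see it. However, your stated ``crux'' is false, and the correct statement matters for how cleanly the coherence issue resolves.

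An open morphism in $\Bun$ is the ${\sf Cylo}$ of a map in $\Strat^{\opn}$, which is a composite of a refinement and an open embedding. The constructible tangent derivative of an open \emph{embedding} is a linear isomorphism, but the constructible tangent derivative of a \emph{refinement} is generically a proper injection, not an isomorphism: refining a stratification can place a point in a lower-dimensional stratum. Concretely, for the refinement $\gamma \colon X_0 = (\{0\}\subset\RR) \to X_1 = \RR$, the time-direction exit path in $Z = {\sf Cylo}(\gamma)$ from $(0,0)$ to $(0,t)$ is carried by $\sT_\pi$ to the injection $0 = \sT_{X_0}(0) \hookrightarrow \sT_{X_1}(0) = \RR$, which is not an equivalence. (This is consistent with the fact that the paper identifies $\sT\colon \Exit\to\Vect^{\sf inj}$ as a localization at closed and \emph{embedding} morphisms, not at refinements.) So your claim that ``the stratified derivative of a refinement or open embedding is a linear isomorphism'' is wrong. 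Fortunately this is not fatal: stability of $\tau$ means $\tau\to\Vect^{\sf inj}$ is a Cartesian fibration, which admits Cartesian lifts over \emph{all} morphisms of $\Vect^{\sf inj}$, not merely equivalences; you simply land over the smaller vector space $\sT_{X_0}(x)$ rather than over an isomorphic copy of $\sT_{X_1}(\gamma(x))$. You should delete the isomorphism claim, since it misstates the mechanism.

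Regarding the coherence concern you flag in your final paragraph: the cleanest resolution is to note that $\exit(Z)$ is the cograph (collage) of $\exit(\gamma) \colon \exit(X_0) \to \exit(X_1)$, equivalently $\exit(Z) \to \exit(\Delta^1) = [1]$ is the coCartesian fibration classified by $\exit(\gamma)$. Therefore the inclusion $\exit(X_1) \hookrightarrow \exit(Z)$ is a reflective full $\infty$-subcategory, with reflection sending $x\in\exit(X_0)$ to $\gamma(x)$. Given a section of a Cartesian fibration defined only over a reflective full subcategory, taking Cartesian lifts along the unit of the reflection produces a canonical extension to the whole $\infty$-category, and its universal property is exactly what exhibits the resulting morphism in $\Bun^\tau$ as $\gamma$-Cartesian. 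This packages the pointwise transports into a single adjunction argument and makes visible exactly where openness of $\gamma$ is used: it guarantees that $\exit(Z)\to[1]$ is coCartesian, hence that the reflection exists.
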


The next result shows that in a simple situation, the formation of links implements a factorization system by closed-creation morphisms followed by embedding morphisms.  

\begin{lemma}\label{tau.factor}
Let $\tau$ be a stable tangential structure.
Let $M$ be a smooth manifold equipped with a $\tau$-framing.
The composition functor in~(\ref{3}) is an equivalence between $\infty$-categories.
In particular, each morphism $M\to D$ in $\Bun^\tau$ admits a unique factorization
\[
M\xra{~\sf cls.crt~} E \xra{~\emb~} D
\]
as the reversed cylinder on a proper constructible bundle followed by a stratified open embedding morphism.  

\end{lemma}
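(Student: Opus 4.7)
The plan is to show that the composition functor is a fiberwise equivalence of left fibrations over $\Bun^\tau$. Unwinding definitions, this amounts to the claim that for each morphism $f\colon M\to D$ in $\Bun^\tau$, the space of factorizations $M \xra{\sf cls.crt} E \xra{\emb} D$ of $f$ is contractible. I would prove this in two stages: first for the trivial tangential structure (so the statement concerns $\Bun$ itself), then bootstrap to arbitrary stable $\tau$ via the Cartesian fibration hypothesis.

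For the unframed case, existence of a factorization is provided by the link construction of \cite{aft1} recalled in Remark~\ref{rem.nofact}. Representing $f$ as a constructible bundle $X \to \Delta^1$ with $X_0 = M$ and $X_1 = D$, the link $\Link_M(X)$ supplies a canonical factorization $M \xra{\sf cls.crt} \Link_M(X) \xra{\sf open} D$. The crucial geometric input is that, since $M$ is smooth, the open morphism $\Link_M(X) \to D$ is in fact an open stratified embedding: geometrically, $\Link_M(X)$ realizes the boundary of a tubular neighborhood of $M$ inside $X$, which embeds as an open substratified subspace of the complementary stratum $D$.

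The main obstacle is uniqueness, i.e., contractibility of the space of such factorizations; this is exactly the assertion flagged in the last sentence of Remark~\ref{rem.nofact}. My plan is to exhibit a canonical comparison from an arbitrary factorization $M \xra{\sf cls.crt} E \xra{\emb} D$ to the link factorization, and show it is an equivalence. Applying ${\sf Cylr}$ of Theorem~\ref{thm.class.maps} to the first factor, the closed-creation $M \to E$ corresponds to a proper constructible bundle $E \to M$; the embedding $E \hookrightarrow D$ then realizes $E$ inside $X$ as an open substratified space containing $M$, forcing $E$ to be isomorphic over $D$ to $\Link_M(X)$ up to contractible choice. The smoothness of $M$ is what eliminates the iterated-link contributions that cause non-contractibility in the general singular case, and the embedding condition on the second factor is what rules out the further freedom of compatible refinements.

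For general stable $\tau$, the final step is to promote the unframed factorization to $\Bun^\tau$. By Observation~\ref{stable.ref.loc}, the open embedding $E \hookrightarrow D$ admits a unique Cartesian lift to $\Bun^\tau$ with target the given $\tau$-framed $D$, which equips $E$ with a canonical induced $\tau$-framing. The universal property of Cartesian arrows, combined with the requirement that the composition in $\Bun^\tau$ equal $f$, then uniquely lifts the closed-creation $M \to E$ to $\Bun^\tau$. Combining these with the unframed contractibility yields contractibility of the space of factorizations of $f$ in $\Bun^\tau$, completing the argument.
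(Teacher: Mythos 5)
Your overall strategy---existence via the link construction, uniqueness forced by smoothness of $M$ together with the embedding condition, and Cartesian lifts (Observation~\ref{stable.ref.loc}) to handle a stable $\tau$---is the same in spirit as the paper's, but the two steps that carry the actual content are missing. First, your opening reduction is not available as stated: the source of the composition functor~(\ref{3}) is not known at this point to be a left fibration over $\Bun^\tau$ (that it is one is a \emph{consequence} of the lemma, cf.\ Corollary~\ref{cor.actuallyleft}), so ``fiberwise equivalence of left fibrations'' cannot be invoked without circularity. The paper instead tests the functor against $\Cat([p],-)$ and maps from spheres, which by adjunction reduces the lemma to producing an essentially unique lift~(\ref{lift}) of each functor $\exit(K)\to(\Bun^\tau)^{M/}$ for \emph{every compact stratified space} $K$. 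This is not cosmetic: since $\Bun^\tau$ is a striation sheaf, the ``space of factorizations'' of a single morphism $M\to D$ is itself probed by compact stratified families, so proving it contractible already requires the parametrized statement. Your construction treats only a single constructible bundle $X\to\Delta^1$, whereas the proof must perform the link, the fiberwise reversed cylinder on $\Link_{X_0}(X)\to M$, the fiberwise open cylinder, a concatenation using that $\Bun$ is an $\infty$-category, and the extension of fiberwise $\tau$-structures, all over an arbitrary compact base, i.e.\ over $\oC(K\times\Delta^1)$.

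Second, the uniqueness step---the heart of the lemma, given the warning in Remark~\ref{rem.nofact} that in general the space of closed-creation/open factorizations is \emph{not} contractible---is asserted rather than argued: ``forcing $E$ to be isomorphic over $D$ to $\Link_M(X)$ up to contractible choice'' is exactly what must be proved. The paper's mechanism is to compare an arbitrary factorization $F$ with the link factorization by forming \emph{iterated} links, producing a diagram in $\Fun\bigl(\exit(K),\Bun\bigr)$ in which: the open morphisms out of the links are embeddings because every stratum of the smooth manifold $M$ is both open and closed; the embedding hypothesis on the second factor forces the top comparison morphism to be an equivalence; and a closed-creation factoring through a closed-creation must be an equivalence. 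These together yield $\Link_{X_0}(X_{|K})\simeq F_{|K\times\Delta^{\{0\}}}$, which is the uniqueness. (A small slip along the way: $E$ embeds into the generic fiber $D$ and does not contain $M$; it is the reversed cylinder on $E\to M$, the total space over $\Delta^1$, that contains $M$.) Your $\tau$-lifting via the universal property of the Cartesian arrow is reasonable for a single factorization and matches the role of Observation~\ref{stable.ref.loc} in the paper, but it too must be carried out in families over $K$ to conclude.
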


\begin{proof}
The composition functor~(\ref{3}) is an equivalence if and only if, for each $[p]\in \bDelta$, the map between spaces of functors from $[p]$,
\[
\Cat\bigl( [p] ,
(\Bun^\tau)^{M/^{\sf cls.crt}} \underset{\Bun^\tau}\times \Ar^{\emb}(\Bun^\tau)
\bigr)
\longrightarrow
\Cat\bigl([p] , (\Bun^\tau)^{M/} \bigr)~,
\]
is an equivalence.
For this, it is enough to show, each point in the domain of this map, and for each $q\geq 0$, that the map between sets of connected components of based maps from the $q$-sphere,
\[
\pi_0 \Map^{\ast/}\Bigl(  S^q ,
\Cat\bigl( [p] ,
(\Bun^\tau)^{M/^{\sf cls.crt}} \underset{\Bun^\tau}\times \Ar^{\emb}(\Bun^\tau)
\bigr)
\Bigr)
\longrightarrow
\pi_0  \Map^{\ast/}\Bigl(  S^q ,
\Cat\bigl([p] , (\Bun^\tau)^{M/} \bigr)
\Bigr)~,
\]
is bijective.
What we show below is that the map between connected components of non-based maps
\begin{equation}\label{333}
\pi_0 \Map\Bigl(  S^q ,
\Cat\bigl( [p] ,
(\Bun^\tau)^{M/^{\sf cls.crt}} \underset{\Bun^\tau}\times \Ar^{\emb}(\Bun^\tau)
\bigr)
\Bigr)
\longrightarrow
\pi_0  \Map\Bigl(  S^q ,
\Cat\bigl([p] , (\Bun^\tau)^{M/} \bigr)~,
\Bigr)
\end{equation}
is bijective.
The case of based maps follows nearly identically; we leave the modification to the reader.
By adjunction, as the case $K=\Delta^p\times S^q$, bijectivity of~(\ref{333}) is implied by bijectivity of the map between sets of connected components of functors,
\begin{equation}\label{334}
\pi_0
\Cat\bigl( \exit(K) ,
(\Bun^\tau)^{M/^{\sf cls.crt}} \underset{\Bun^\tau}\times \Ar^{\emb}(\Bun^\tau)
\bigr)
\longrightarrow
\pi_0
\Cat\bigl( \exit(K) , (\Bun^\tau)^{M/} \bigr)~,
\end{equation}
for each compact stratified space $K$.
In this way, we are reduced to showing that this map~(\ref{334}) is bijective.
Unwinding, this is to show the existence of an essentially unique lift
\begin{equation}\label{lift}
\xymatrix{
&(\Bun^\tau)^{M/^{\sf cls.crt}} \underset{\Bun^\tau}\times \Ar^{\emb}(\Bun^\tau)\ar[d]\\
\exit(K)\ar@{-->}[ur]\ar[r]&(\Bun^\tau)^{M/}}
\end{equation}
for every compact stratified space $K$.

Using that $K$ is compact, the closed cone $\oC(K)= \{0\}   \underset{K\times \{0\}}\amalg      K\times\Delta^1$ is another compact stratified space.
By adjunction, a functor $\exit(K)\ra (\Bun^\tau)^{M/}$ is equivalent to a functor
\[
\exit(K)^{\tl}~\simeq ~\exit\bigl(\ov\sC(K)\bigr)\longrightarrow \Bun^\tau
\]
together with an identification of the value on the cone-point as $M$.
The latter classifies the following data:
\begin{itemize}
\item
a constructible bundle $X \xra{\pi} \oC(K)$;

\item
a $\tau$-structure on $\pi$, by which we mean a factorization $\sT_\pi \colon \exit(X) \to \tau \to \Vect^{\sf inj}$ of the fiberwise tangent classifier;

\item
an identification over $\{0\}\subset \oC(K)$, by which we mean an equivalence $M \simeq X_0$ over $\tau$ with the fiber over $\{0\}\subset \oC(K)$.

\end{itemize}

So fix such data.  
The lift~(\ref{lift}) classifies the following data:
\begin{itemize}
\item a constructible bundle $\widetilde{X}\ra \oC(K\times \Delta^1)$ over the closed cone,

\item fiberwise $\tau$-structure on it,

\item an identification 
over $\tau$ of its restriction:
\[
\xymatrix{
X       \ar[d]      \ar@{=}[rr]
&&
\widetilde{X}_{|\oC(K\times\{1\})}   \ar[d]    
\\
\oC(K)   \ar@{=}[rr]
&& 
\oC(K\times\{1\}) ;
}
\]
\end{itemize}
this data is subject to the conditions that
\begin{itemize}
\item 
for each point $k\in K$, the base change constructible bundle 
\[
\widetilde{X}_{|\oC(\{k\}\times\{0\})} 
\longrightarrow 
\oC\bigl(\{k\}\times\{0\}\bigr) = \oC\bigl(\{k\}\bigr) \cong \Delta^1
\]
is classified by a closed-creation morphism in $\Bun^\tau$,
\item 
for each point $k\in K$, the base change constructible bundle 
\[
\widetilde{X}_{|\{k\}\times \Delta^1} \longrightarrow \{k\}\times \Delta^1 = \Delta^1
\]
is classified by an embedding morphism in $\Bun^\tau$.
\end{itemize}
We first constructsuch a lift~(\ref{lift}). 
Take the stratified space $\Link_{X_0}(X)$, 
which is the link of the closed constructible subspace $X_0\subset X$. This link lies in a commutative diagram among stratified spaces:
\[
\xymatrix{
M=X_0 \ar[d]&\ar[d]\ar[l]_-{\sf p.cbl} \Link_{X_0}(X)\ar[r]^-{\sf open} &X_{|K}\ar[d]\\
\{0\}&\ar[l]
\Link_{\{0\}}\bigl(\oC(K)\bigr)   \ar[r]^-{\cong} 
& 
K .
}
\]
From the definition of links, the fact that $M$ is trivially stratified results in the following two simplifications.
\begin{enumerate}
\item
The proper constructible bundle $\Link_{X_0}(X)\ra X_{|K\times\Delta^1}$ is in fact a fiber bundle of stratified spaces.
(Heuristically, this is to say that the fibers vary continuously.)

\item
The open map $\Link_{X_0}(X)\ra X_{|K\times\Delta^1}$ is in fact an open stratified embedding.
(Heuristically, this is to say that the refinement map onto its image is in fact an isomorphism.)

\end{enumerate}
Taking fiberwise reversed mapping cylinder over $K$ gives a constructible bundle
\[
{\sf Cylr}^{\sf f.w}\Bigl(M \xla{\sf f.w.p.cbl}   \Link_{X_0}(X) \Bigr) 
\longrightarrow
\oC(K)~ ;
\]
taking fiberwise open mapping cylinder over $K$ gives a constructible bundle
\[
{\sf Cylo}^{\sf f.w}\Bigl( \Link_{X_0}(X) \xla{\sf f.w.opn}  X_{|K} \Bigr) 
\longrightarrow
K\times \Delta^1~ .
\]
These constructible bundles base change over $K = K\times \Delta^{\{0\}}$ identically.
Using that $\Bun$ is an $\infty$-category, this pair of concatenating constructible bundles extends to a constructible bundle 
\[
\w{X} \longrightarrow \oC(K\times \Delta^1)\supset \oC(K)\underset{K} \amalg K\times \Delta^1~.
\]
With point (2) above, 
the construction of this constructible bundle supplies the sought data of a lift~(\ref{lift}), sans $\tau$-structures.
We now extend the given fiberwise $\tau$-structure on $X\to \oC(K)$ to one on $\w{X}\to \oC(K\times \Delta^1)$.

Since the tangential structure $\tau$ pulls back along open maps 
(Observation~\ref{stable.ref.loc}), point~(2) just above grants that there is a unique fiberwise $\tau$-structure on the constructible bundle $\Link_{X_0}(X) \to K$ for which the open embedding of point~(2) is one between fiberwise $\tau$-structured constructible bundles.
Furthermore, the fiberwise $\tau$-structure on $X \to \oC(K)$ determines a fiberwise $\tau$-structure on ${\sf Cylr}\bigl(X_0 \xla{\sf f.w.p.cbl} \Link_{X_0}(X)\bigr)  \to \oC(K)$ that restricts to the fiberwise $\tau$-structure on $\Link_{X_0}(X) \to K$.
Composition in the $\infty$-category $\Bun^{\tau}$ endows this extension $\w{X} \to \oC(K\times \Delta^1)$ with a fiberwise $\tau$-structure extending the given one over $\oC(K\times \Delta^{\{1\}})$.
We have exhibited a sought lift~(\ref{lift}).

We lastly show that every factorization is equivalent to that constructed above. 
Let $F\ra \oC(K\times\Delta^1)$ be a constructible bundle, together a requisite fiberwise $\tau$-structure on it and identifications over $\oC(K)$, defining another such lift~(\ref{lift}).
This is classified by a diagram in $\Fun\bigl(\exit(K),\Bun^\tau\bigr)$,
\[
\xymatrix{
&&F_{|K\times\Delta^{\{0\}}}\ar[d]^{\emb}\\
X_0\ar[urr]^-{\sf cls.crt}\ar[rr]_-{\widetilde{X}}&&X_{|K} ,
}
\]
in which $X_0$ is understood
as the constant functor $\exit(K)\ra \Bun^\tau$ valued at $X_0$. 
By taking iterated links, we obtain a diagram in $\Fun\bigl(\exit(K),\Bun\bigr)$:
\[
\xymatrix{
&&\ar[d]_-{(3)}^-{\sf cls.crt}\Link_{X_0}(F_{|K\times\Delta^{\{0\}}})\ar[r]_-{(1)}^-{\sf open}&F_{|K\times\Delta^{\{0\}}}\ar[dd]^-{\emb}\\
X_0 \ar[drr]_-{\sf cls.crt}\ar[urr]^-{\sf cls.crt}&&\ar[d]_-{(2)}^-{\sf open}\Link_{\Link_{X_0}(F_{|K\times\Delta^{\{0\}}})}\bigl(\Link_{X_0}(F_{|K\times\Delta^1})\bigr)
\\
&&
\Link_{X_0}(X_{|K})      \ar[r]_-{(1)}^-{\sf open}
&
X_{|K}    .
}
\]
As argued above, this diagram admits a unique lift to one in $\Fun\bigl(\exit(K),\Bun^\tau\bigr)$, extending the given lift of $c_1 \to \Fun\bigl(\exit(K),\Bun^\tau\bigr)$ classifying $X\to \oC(K)$.
Also as argued above, using that each stratum of $M=X_0$ is both open and closed, the construction of links gives that the horizontal open morphisms labeled as (1) must be embedding morphisms.
Furthermore, the assumption that the morphism $F_{|K\times \Delta^{\{0\}}} \to X_{|K}$ in $\Fun\bigl(\exit(K),\Bun^\tau)$ is by embedding morphisms gives that the top horizontal morphism is in fact an equivalence. 
Since the morphism (2) factors a closed-creation through a closed-creation, it too must be an equivalence. 
This implies the closed-creation morphism (3) must be an equivalence.
We conclude an equivalence in $\Fun\bigl(\exit(K),\Bun^\tau\bigr)$,
\[
\Link_{X_0}(X_{|K})~\simeq~F_{|K\times \Delta^{\{0\}}}~,
\]
under the constant functor at $X_0$ and over the restriction of the given functor classifying $X_{|K}$.

\end{proof}

\begin{remark}
The assumption that $M$ was a smooth manifold, and not a general stratified space, is essential for Lemma~\ref{tau.factor}. 
See Remark \ref{rem.nofact}.
\end{remark}

\subsection{Proof of the tangle hypothesis}

The canonical factorization of Lemma~\ref{lemma.corep} has several consequences.

\begin{cor}\label{cor.actuallyleft}
The functor $\fTang_{n\subset n+k}^{\fr} \ra \Mfd_{n+k}^{\sfr}$ is a left fibration.
\end{cor}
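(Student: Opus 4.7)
The plan is to reduce this immediately to Lemma \ref{lemma.corep} and a standard fact about undercategories. By Lemma \ref{lemma.corep}, composition defines an equivalence
\[
\fTang_{n\subset n+k}^{\fr} \xra{~\simeq~} (\Mfd_{n+k}^{\sfr})^{\RR^k/}
\]
of $\infty$-categories over $\Mfd_{n+k}^{\sfr}$. Since the property of being a left fibration is invariant under equivalence over the base, it suffices to prove that the canonical projection from the undercategory,
\[
(\Mfd_{n+k}^{\sfr})^{\RR^k/} \longrightarrow \Mfd_{n+k}^{\sfr}~,
\]
is a left fibration.

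This, however, is a completely general fact about $\infty$-categories: for any $\infty$-category $\cC$ and any object $c\in \cC$, the forgetful functor $\cC^{c/} \to \cC$ from the $\infty$-undercategory is a left fibration. Concretely, given an edge $D_0\to D_1$ in $\Mfd_{n+k}^{\sfr}$ and a lift of its source to an object $(\RR^k\xra{f_0} D_0)$ of $(\Mfd_{n+k}^{\sfr})^{\RR^k/}$, the composite $\RR^k\xra{f_0}D_0 \to D_1$ provides a (cocartesian) lift to $(\Mfd_{n+k}^{\sfr})^{\RR^k/}$, and the higher-simplex version of this lifting is automatic from the universal property of $\cC^{c/}$.

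Putting these two steps together yields the corollary. There is no substantive obstacle here: all the real work has already been absorbed into Lemma \ref{tau.factor}, whose conclusion via Lemma \ref{lemma.corep} identifies $\fTang_{n\subset n+k}^{\fr}$ with an undercategory of $\Mfd_{n+k}^{\sfr}$. The main thing to verify in writing up is that the equivalence of Lemma \ref{lemma.corep} is genuinely over $\Mfd_{n+k}^{\sfr}$ (the target functor on the left being $\ev_t$ after composition, and on the right being $\ev_t$ of the undercategory), which is immediate from the definition of the composition functor used in that lemma.
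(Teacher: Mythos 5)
Your proof is correct and matches the paper's argument exactly: both invoke Lemma~\ref{lemma.corep} to identify $\fTang_{n\subset n+k}^{\fr}$ with the undercategory $(\Mfd_{n+k}^{\sfr})^{\RR^k/}$ over the base, and then appeal to the general fact that the forgetful functor $\cC^{c/}\to\cC$ is a left fibration. Your added remark about checking the equivalence is genuinely over $\Mfd_{n+k}^{\sfr}$ is appropriate bookkeeping but, as you note, is already built into the statement of Lemma~\ref{lemma.corep}.
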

\begin{proof}
For any object $x$ in an $\oo$-category $\cM$, the forgetful functor $\cM^{x/} \ra \cM$ is a left fibration. So, in particular, the functor $\fTang_{n\subset n+k}^{\fr} \simeq (\Mfd_{n+k})^{\RR^k/} \ra \Mfd_{n+k}^{\fr}$ is a left fibration.
\end{proof}

\begin{definition}
The functor 
\[
\Tang_{n\subset n+k}^{\fr} \colon \Mfd_{n+k}^{\sfr}\longrightarrow \Spaces
\]
is the straightening of the left fibration $\fTang_{n\subset n+k}^{\fr} \ra \Mfd_{n+k}^{\sfr}$. 

\end{definition}

\begin{remark}\label{co.rep}
Lemma~\ref{lemma.corep} reveals that the functor $\Tang_{n\subset n+k}^{\fr}\colon \Mfd_{n+k}^{\sfr}\to \Spaces$ is corepresented by $\RR^k$.
\end{remark}

Lemma \ref{lemma.corep} has a further corollary.
Recall the Definition~\ref{def.closed-cover} of a closed sheaf on $\Mfd_n^{\sfr}$.  

\begin{cor}\label{tang.cat}
The functor $\Tang_{n\subset n+k}^{\fr}\colon \Mfd_n^{\sfr}\to \Spaces$ is a closed sheaf.
Furthermore, the composite functor
\[
\Tang_{n\subset n+k}^{\fr}\colon \btheta_{n+k,\emptyset}^{\op} \xra{\lag - \rag} \Mfd_{n+k}^{\vfr} \to \Mfd_{n+k}^{\sfr}\xra{\Tang_{n\subset n+k}^{\fr}} \Spaces
\]
is a pointed flagged $(\infty,n)$-category.  

\end{cor}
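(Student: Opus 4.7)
The plan is to establish the two claims of Corollary~\ref{tang.cat} in turn; each is a formal consequence of Lemma~\ref{lemma.corep}.

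For the first claim, that $\Tang_{n\subset n+k}^{\fr}$ is a closed sheaf on $\Mfd_{n+k}^{\sfr}$, I would invoke Remark~\ref{co.rep}: straightening the equivalence $\fTang_{n\subset n+k}^{\fr} \simeq (\Mfd_{n+k}^{\sfr})^{\RR^k/}$ of Lemma~\ref{lemma.corep} gives
\[
\Tang_{n\subset n+k}^{\fr}(D) ~\simeq~ \Map_{\Mfd_{n+k}^{\sfr}}(\RR^k, D).
\]
Every corepresentable functor to $\Spaces$ preserves all limits in its argument, and by Definition~\ref{def.closed-cover} closed covers are certain limit diagrams in $\Bun^{\Vect^{\sf inj}_{/\RR^{n+k}}} = \Mfd_{n+k}^{\sfr}$. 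Hence $\Tang_{n\subset n+k}^{\fr}$ carries closed covers to limit diagrams of spaces.

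For the second claim, I would apply Theorem~\ref{thm.flagged}, which identifies pointed flagged $(\infty, n+k)$-categories with those Segal sheaves on $\btheta_{n+k, \emptyset}$ that are terminal on the basepoint. The value of the composite on the basepoint reduces, via Lemma~\ref{lemma.corep} and Theorem~\ref{thm.class.maps}, to contractibility of $\Map_{\Mfd_{n+k}^{\sfr}}(\RR^k, \emptyset)$: any such morphism factors as a reversed cylinder on the unique proper constructible bundle $\emptyset \to \RR^k$ followed by an open embedding into $\emptyset$, and both factors contribute contractible moduli. For the Segal sheaf condition, I would note that the composite $\btheta_{n+k, \emptyset}^{\op} \xra{\lag - \rag} \Mfd_{n+k}^{\vfr} \to \Mfd_{n+k}^{\sfr}$ carries Segal covering diagrams to closed covers of solidly framed stratified spaces; the first claim then forces these to limit diagrams of spaces.

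The only nontrivial ingredient is the compatibility just invoked: that Segal covering diagrams in $\btheta_{n+k}$ are sent under $\lag - \rag$ to closed covers in $\Mfd_{n+k}^{\vfr}$. This is built into the construction of $\Disk_{n+k}^{\vfr}$ as the closure of $\btheta_{n+k, \emptyset}^{\op}$ under closed covers (Definition~\ref{def.disk.vfr}), and is substantiated by the equivalence $\fCat_{n+k}^{\ast/} \simeq \cShv(\Disk_{n+k}^{\vfr})$ of Theorem~\ref{cats.are.sheaves}; closed covers are preserved by the forgetful functor $\Mfd_{n+k}^{\vfr} \to \Mfd_{n+k}^{\sfr}$ of~(\ref{2}) since both are instances of $\Bun^\tau$ compared along a map of tangential structures. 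Given these cited inputs, the corollary assembles formally from Lemma~\ref{lemma.corep}.
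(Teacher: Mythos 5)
Your proposal follows the same three-step structure as the paper's proof: corepresentability (from Lemma~\ref{lemma.corep}, via Remark~\ref{co.rep}) forces preservation of limits and hence of closed covers; the cellular realization $\langle-\rangle$ carries Segal covering diagrams to closed covers, and the forgetful functor $\Mfd_{n+k}^{\vfr}\to\Mfd_{n+k}^{\sfr}$ preserves closed covers, so the restriction is a Segal sheaf; finally a contractibility check establishes pointedness. Your argument is correct.

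The one substantive divergence is the pointedness check. You evaluate at the zero object, computing $\Tang_{n\subset n+k}^{\fr}(\langle\emptyset\rangle)\simeq\Map_{\Mfd_{n+k}^{\sfr}}(\RR^k,\emptyset)$ and arguing it is contractible via the closed-creation/embedding factorization of Lemma~\ref{tau.factor}, which is precisely the condition $\cF(\emptyset)=\ast$ singled out in Theorem~\ref{thm.flagged}. The paper instead records $\Tang_{n\subset n+k}^{\fr}(\DD^0)=\{\emptyset\}$ for $k>0$, i.e., the value at $\langle c_0\rangle$ rather than at the adjoined zero object. Both spaces are contractible for $k>0$ (no nonempty tangles of negative codimension), so the conclusion is the same, but your phrasing aligns more literally with the criterion in Theorem~\ref{thm.flagged}. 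Otherwise the two proofs are in lock-step.
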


\begin{proof}
As pointed out in Remark~\ref{co.rep}, the functor $\Tang_{n\subset n+k}^{\fr}$ is corepresentable.
Therefore, this functor $\Tang_{n\subset n+k}^{\fr}$ carries limit diagrams to limit diagrams.  
Because closed covers in $\Mfd_n^{\sfr}$ are, in particular, limit diagrams, then this functor $\Tang_{n\subset n+k}^{\fr}$ is a closed sheaf.

The functor $\Mfd_n^{\vfr} \to \Mfd_n^{\sfr}$ carries closed cover diagrams to closed cover diagrams.
In particular, the composite functor $\btheta_{n+k,\emptyset}^{\op} \xra{\lag - \rag} \Mfd_{n+k}^{\vfr} \to \Mfd_{n+k}^{\sfr}$ carries (the opposites of) Segal covers to closed covers.
It follows that its restriction to $\bTheta_{n,\emptyset}^{\op}$ carries (the opposites of) closed covers to limit diagrams.  
Lastly, the value $\Tang_{n\subset n+k}^{\fr}(\DD^0) = \{\emptyset\}$ is the single-element set for $k>0$.

\end{proof}

\begin{remark}\label{tang.objects}
For each $0\leq j\leq n+k$, the space of $j$-morphisms of the pointed flagged $(\infty,n+k)$-category $\Tang_{n\subset n+k}^{\fr}$ is the space of framed comdimension-$k$ tangles $W\hookrightarrow  \DD^j$ in the sense of Remark~\ref{rem.clscrtEuc}.
So for $j<k$, the space of $j$-morphisms is contractible, containing only the framed tangle $\emptyset \hookrightarrow \DD^j$.  
As the case that $j=k$, the space of $k$-morphisms is the unordered configuration space
\[
\underset{r\geq 0}\coprod   {\sf Conf}_i(\RR^k)_{\Sigma_k}~\simeq~\Tang_{n\subset n+k}^{\fr}(c_k)
\]
is the space of unordered configurations of points in $\RR^k$.  
\end{remark}

We now turn to the tangle hypothesis, which is a classification of functors from our higher category of tangles $\Tang_{n\subset n+k}^{\fr}$.

\begin{theorem}[Tangle hypothesis]\label{thm.tang}
Assuming Conjecture~\ref{conj.one}, for each pointed $(\infty,n+k)$-category $\uno\in\cC$ with adjoints, evaluation at the 
$k$-endomorphism 
$(\{0\}\subset\RR^k) \in \Tang_{n\subset n+k}^{\fr}$ defines an equivalence
\[
\ev_{\RR^k} \colon \Map^{\ast/}\bigl(\Tang_{n\subset n+k}^{\fr}, \cC\bigr) \xra{~ \simeq ~}  \kEnd_{\cC}(\uno)
\]
between the space of $\cC$-valued pointed functors and the space of $k$-endomorphisms of the distinguished object $\uno\in \cC$.
\end{theorem}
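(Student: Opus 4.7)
The plan is to assemble the tangle hypothesis directly from the pieces already in place: Conjecture~\ref{conj.one}, the corepresentability from Remark~\ref{co.rep}, and the computation of factorization homology on Euclidean space from Theorem~\ref{on.Rk}. The argument is in essence the Yoneda lemma applied to the corepresented functor $\Tang_{n\subset n+k}^{\fr}$ on $\Mfd_{n+k}^{\sfr}$.

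First I would confirm that the pointed flagged $(\infty,n+k)$-category obtained from Corollary~\ref{tang.cat}, by restricting the closed sheaf $\Tang_{n\subset n+k}^{\fr}$ along $\btheta_{n+k,\emptyset}^{\op}\to\Mfd_{n+k}^{\sfr}$, has adjoints: each $j$-morphism is a framed codimension-$k$ tangle inside $\DD^j$ in the sense of Remark~\ref{rem.clscrtEuc}, and the orientation-reversed tangle supplies the adjoint, with unit and counit realized by standard elementary-handle tangles whose triangle identities hold on the nose in the moduli of framed tangles. With this in hand, both source and target of the asserted equivalence lie in $(\Cat_{n+k}^{\adj})^{\ast/}$, so Conjecture~\ref{conj.one} applies and yields a fully faithful embedding, giving
\[
\Map^{\ast/}\bigl(\Tang_{n\subset n+k}^{\fr},\,\cC\bigr)
~\simeq~
\Map_{\Fun(\Mfd_{n+k}^{\sfr},\,\Spaces)}\bigl(\textstyle\int \Tang_{n\subset n+k}^{\fr},~\int \cC\bigr).
\]

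Next I would identify $\int \Tang_{n\subset n+k}^{\fr}$ with the original functor $\Tang_{n\subset n+k}^{\fr}:\Mfd_{n+k}^{\sfr}\to\Spaces$ of Corollary~\ref{tang.cat}. Both are closed sheaves on $\Mfd_{n+k}^{\sfr}$ restricting to the same pointed flagged $(\infty,n+k)$-category on $\btheta_{n+k,\emptyset}^{\op}$, and the essential uniqueness of such a closed-sheaf extension is the content of the embedding $\int$ of Conjecture~\ref{conj.one}, in the solidly-framed analogue of Theorem~\ref{cats.are.sheaves}. By Remark~\ref{co.rep}, $\Tang_{n\subset n+k}^{\fr}$ is corepresented by $\RR^k\in\Mfd_{n+k}^{\sfr}$, so the Yoneda lemma together with Theorem~\ref{on.Rk} yield
\[
\Map_{\Fun(\Mfd_{n+k}^{\sfr},\,\Spaces)}\bigl(\Tang_{n\subset n+k}^{\fr},~\textstyle\int \cC\bigr)
~\simeq~
(\textstyle\int \cC)(\RR^k)
~\simeq~
\kEnd_{\cC}(\uno).
\]
A direct naturality check identifies the composite equivalence with evaluation at the $k$-endomorphism $(\{0\}\subset\RR^k)\in\Tang_{n\subset n+k}^{\fr}$: under the Yoneda identification this element corresponds to the identity of $\RR^k$ in $\Mfd_{n+k}^{\sfr}$, which in turn corresponds to the distinguished generator of $\int_{\RR^k}\cC\simeq\kEnd_{\cC}(\uno)$.

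The main obstacle is the equivalence $\int\Tang_{n\subset n+k}^{\fr}\simeq\Tang_{n\subset n+k}^{\fr}$ in $\Fun(\Mfd_{n+k}^{\sfr},\Spaces)$, i.e.\ that the globally defined closed sheaf $\Tang_{n\subset n+k}^{\fr}$ is reconstructed from its restriction to $\btheta_{n+k,\emptyset}^{\op}$ by the combined right-then-left Kan extension defining $\int$. This is essentially the uniqueness of closed-sheaf extensions on $\Mfd_{n+k}^{\sfr}$, and is baked into the assertion of Conjecture~\ref{conj.one}; once granted, the remainder of the argument is formal. A secondary (but routine) obstacle is verifying in detail that the adjoints promised for $\Tang_{n\subset n+k}^{\fr}$ are genuine, which reduces to exhibiting explicit unit/counit framed tangles in cubes and checking that the resulting composites are isotopic to identity tangles.
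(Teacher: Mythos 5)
Your proposal follows the same chain of equivalences as the paper's proof: apply the fully faithful factorization homology $\int$ of Conjecture~\ref{conj.one}, identify the resulting functor on $\Mfd_{n+k}^{\sfr}$ with the one corepresented by $\RR^k$ via Lemma~\ref{lemma.corep}, invoke the Yoneda lemma, and finish with Theorem~\ref{on.Rk}. The two points you flag are precisely the two points the paper leaves implicit. First, that $\Tang_{n\subset n+k}^{\fr}$ has adjoints: the paper asserts this (deferring to~\S\ref{sec.adj}) without argument, and your geometric sketch is the right picture, though the phrase ``triangle identities hold on the nose'' should read ``hold up to specified isotopies, which furnish the requisite paths in the moduli of tangles'' --- nothing holds, or needs to hold, on the nose in a moduli space. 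Second, the identification of $\int\Tang_{n\subset n+k}^{\fr}$ with the functor $\Tang_{n\subset n+k}^{\fr}\colon\Mfd_{n+k}^{\sfr}\to\Spaces$ of Corollary~\ref{tang.cat}: the paper's proof uses this silently in passing from its first line to its second, whereas you correctly single it out. As literally stated, Conjecture~\ref{conj.one} asserts only fully faithfulness of $\int$ and compatibility with the vari-framed theory, not that a closed sheaf on $\Mfd_{n+k}^{\sfr}$ is recovered from its restriction to $\btheta_{n+k,\emptyset}^{\op}$; so attributing this step entirely to the conjecture is a bit generous, and what one really needs is the solidly-framed analogue of Theorem~\ref{cats.are.sheaves}. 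In sum, the proof matches the paper's in structure and content, and if anything makes visible an implicit input that the paper's one-paragraph proof glosses over.
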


\begin{proof}
This evaluation map factors as a sequence of equivalences among spaces,
\begin{eqnarray}
\nonumber
\Map_{\ast}\bigl(\Tang_{n\subset n+k}^{\fr},\cC\bigr)
&
\underset{\rm Conj~\ref{conj.one.again}}{\simeq}
&
\Map_{\Fun(\Disk_{n+k}^{\sfr},\Spaces)}\Bigl(\Tang_{n\subset n+k}^{\fr},\int \cC\Bigr)
\\
\nonumber
&
\underset{\rm Lem~\ref{lemma.corep}}{\simeq}
&
\Map_{\Fun(\Disk_{n+k}^{\sfr},\Spaces)}\Bigl(\RR^k,\int \cC\Bigr)
\\
\nonumber
&
\underset{\rm Yoneda}{\simeq}
&
\int_{\RR^k}\cC
\\
\nonumber
&
\underset{\rm Thm~\ref{on.Rk}}\simeq
&
\kEnd_{\cC}(\uno)~,
\end{eqnarray}
which we now explain.
The first map is induced by factorization homology.
This map is an equivalence because this factorization homology functor is fully faithful (Corollary~\ref{conj.one.again}).
The second equivalence is implied by the equivalence of Lemma~\ref{lemma.corep}.
The third equivalence is the Yoneda lemma.
The final equivalence is the identification of the factorization homology on Euclidean space (Theorem~\ref{on.Rk}).

\end{proof}

\subsection{Other formulations of the tangle hypothesis}

The identical statement holds for the $(\oo,n+k)$-category formed from the flagged $(\infty,n)$-category $\Tang_{n\subset n+k}^{\fr}$ by forcing the univalence, or completeness, condition to hold. 
The fully faithful functor between presentable $\infty$-categories
\[
\Shv^{\sf unv}(\btheta_n)~\simeq~ \Cat_n~ \hookrightarrow ~\fCat_{n}~ \simeq~ \Shv(\btheta_n)
\]
preserves limits and filtered colimits.
The adjoint functor theorem grants the existence of a left adjoint:
\[
\Shv(\btheta_n)
~ \simeq~ 
\fCat_{n}
\longrightarrow
\Cat_n
~\simeq~ 
\Shv^{\sf unv}(\btheta_n)
~,\qquad
\cC\mapsto \widehat{\cC}~,
\]
which sends a flagged $(\oo,n)$-category to its univalent-completion.

\begin{definition}\label{def.cat.tang}
The pointed $(\oo,n+k)$-category of framed tangles
\[
\widehat{\Tang^{\fr}_{n\subset n+k}}
\]
is the univalent-completion of the pointed flagged $(\oo,n+k)$-category $\Tang_{n\subset n+k}^{\fr}$.
\end{definition}

\begin{cor}\label{cor.univ.tang}
Assuming Conjecture~\ref{conj.one}, for each pointed $(\infty,n+k)$-category $\uno\in\cC$ with adjoints, evaluation at the
$k$-endomorphism 
$(\{0\}\subset\RR^k) \in \widehat{\Tang_{n\subset n+k}^{\fr}}$ defines an equivalence
\[
\ev_{\RR^k}\colon \Map^{\ast/}\bigl(\widehat{\Tang_{n\subset n+k}^{\fr}}, \cC\bigr) \ \simeq \  \kEnd_{\cC}(\uno)
\]
between the space of $\cC$-valued pointed functors and the space of $k$-endomorphisms of the distinguished object $\uno\in \cC$.
\end{cor}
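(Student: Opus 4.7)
The plan is to reduce Corollary~\ref{cor.univ.tang} to Theorem~\ref{thm.tang} via the univalent-completion adjunction described immediately above the corollary. Since the forgetful functor $\Cat_{n+k} \hookrightarrow \fCat_{n+k}$ is fully faithful with left adjoint $\cC \mapsto \widehat{\cC}$, and since both functors preserve the basepoint (the empty tangle gives the distinguished object on each side), we obtain a natural equivalence of pointed mapping spaces
\[
\Map^{\ast/}\bigl(\widehat{\Tang_{n\subset n+k}^{\fr}} , \cC\bigr) \xra{~\simeq~} \Map^{\ast/}\bigl(\Tang_{n\subset n+k}^{\fr} , \cC\bigr)
\]
whenever the target $\cC$ is a (pointed) $(\infty,n+k)$-category, i.e., lies in $\Cat_{n+k}^{\ast/} \subset \fCat_{n+k}^{\ast/}$. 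In our situation $\cC$ is such an $(\infty,n+k)$-category by hypothesis, so this equivalence applies.

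First, I would invoke the adjunction to rewrite the left-hand side of the corollary in terms of the flagged tangle category. Second, I would verify naturality of the evaluation map: the unit of the adjunction $\Tang_{n\subset n+k}^{\fr} \to \widehat{\Tang_{n\subset n+k}^{\fr}}$ carries the distinguished $k$-morphism $(\{0\}\subset \RR^k)$ of $\Tang_{n\subset n+k}^{\fr}$, identified in Remark~\ref{tang.objects} as a point in $\Tang_{n\subset n+k}^{\fr}(c_k)$, to the $k$-morphism named as $(\{0\}\subset \RR^k)$ in $\widehat{\Tang_{n\subset n+k}^{\fr}}$ used in the statement of Corollary~\ref{cor.univ.tang}. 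This identification yields a commutative triangle of evaluation maps. Third, I would apply Theorem~\ref{thm.tang} to identify the resulting mapping space out of $\Tang_{n\subset n+k}^{\fr}$ with $\kEnd_{\cC}(\uno)$. Stringing these equivalences together gives the statement.

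Concretely, the argument is the sequence of equivalences
\[
\Map^{\ast/}\bigl(\widehat{\Tang_{n\subset n+k}^{\fr}} , \cC\bigr) \underset{\rm adj}{\simeq} \Map^{\ast/}\bigl(\Tang_{n\subset n+k}^{\fr}, \cC\bigr) \underset{\rm Thm~\ref{thm.tang}}{\simeq} \kEnd_{\cC}(\uno)~,
\]
compatible with evaluation at $(\{0\}\subset \RR^k)$. No new input from Conjecture~\ref{conj.one} is required beyond that already used in Theorem~\ref{thm.tang}.

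The only nontrivial point is the compatibility of evaluation with the unit of the univalent-completion adjunction. This amounts to the observation that the canonical functor $\Tang_{n\subset n+k}^{\fr} \to \widehat{\Tang_{n\subset n+k}^{\fr}}$ is, on each value $\cC(c_k)$, the universal map to a space-valued presheaf satisfying the univalence condition, and in particular sends the point $(\{0\}\subset \RR^k)$ to the point so named in the target. I expect this to be immediate from the construction of univalent-completion as a left adjoint, but it is the single step where one must track the basepoint carefully; the remainder is formal from the adjunction and Theorem~\ref{thm.tang}.
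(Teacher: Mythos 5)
Your proposal is correct and takes essentially the same route as the paper. The paper's proof likewise reduces to Theorem~\ref{thm.tang} via the univalent-completion adjunction; where you invoke the adjunction equivalence $\Map^{\ast/}(\widehat{\Tang_{n\subset n+k}^{\fr}}, \cC) \simeq \Map^{\ast/}(\Tang_{n\subset n+k}^{\fr}, \cC)$ directly (precomposition with the unit), the paper packages this as the map induced by applying $\widehat{(-)}$ to both variables followed by the identification $\widehat{\cC}\simeq\cC$, and observes it is an equivalence from the same universal property---the two descriptions are inverse to each other, so this is not a genuinely different argument. Your extra care about the basepoint compatibility of the evaluation maps under the unit $\Tang_{n\subset n+k}^{\fr}\to\widehat{\Tang_{n\subset n+k}^{\fr}}$ is a reasonable thing to spell out; the paper handles it silently, since the $k$-endomorphism $(\{0\}\subset\RR^k)$ of the completion is by definition the image of the one in $\Tang_{n\subset n+k}^{\fr}$ under the unit, so the triangle of evaluation maps commutes tautologically.
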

\begin{proof}
Univalent-completion determines the top horizontal arrow in the diagram among spaces:
\[
\xymatrix{
\Map^{\ast/}\bigl(\Tang_{n\subset n+k}^{\fr}, \cC\bigr)  \ar[rr]^-{\widehat{(-)}}  \ar[dr]_-{\ev_{\RR^k}}
&&
\Map^{\ast/}\bigl(\widehat{\Tang_{n\subset n+k}^{\fr}}, \widehat{\cC}\bigr)   \ar[dl]^-{\ev_{\RR^k}}
\\
&
\kEnd_{\cC}(\uno)
&
.
}
\]
By assumption, $\cC$ is univalent-complete: $\cC\simeq \widehat{\cC}$.
From the universal property of the univalent-completion as a left adjoint, the top horizontal map is an equivalence.
Theorem~\ref{thm.tang} states that the down-rightward map is an equivalence.  
It follows that the down-leftward map is an equivalence, as desired.

\end{proof}

The tangle hypothesis has yet another form in terms of $\cE_k$-monoidal higher categories. 
There is an adjunction between $\infty$-categories
\begin{equation}\label{B.Omega}
\fB^k\colon \Alg_{\cE_k}(\Cat_n)~\rightleftarrows~ \Cat_{n+k}^{\ast/}\colon \Omega^k~.
\end{equation}
The left adjoint is fully faithful; its image consists of those pointed $(\infty,n+k)$-categories $\ast \to \cC$ for which the functor $\ast \xra{\simeq} \cC_{<k}$ to the maximal $(\infty,k-1)$-subcategory is an equivalence.

\begin{remark}
The colocalization~(\ref{B.Omega}) can be obtained as follows.  
Let $\cV$ be a symmetric monoidal $\infty$-category.  
In~\S6.3 of~\cite{gepnerhaugseng}, the authors construct a colocalization among $\infty$-categories
\[
\fB^k\colon \Alg_{\cE_k}(\cV)~\rightleftarrows~ \Cat_{k}(\cV)^{1_\cV/}\colon \Omega^k
\]
involving that of $\cE_k$-algebras in $\cV$ and that of $(\infty,k)$-categories enriched over $\cV$ equipped with a morphism from the $1_\cV$, the $\cV$-enrichment of the terminal $(\infty,k-1)$-category $\ast$ by the symmetric monoidal unit $1_\cV\in \cV$.
The colocalization~(\ref{B.Omega}) follows by way of the equivalence 
\[
\Cat_{n+k}~\simeq~ \Cat_k(\Cat_n)
\]
of Remark~\ref{rem.enriched}.
The identification of the image of $\fB^k$ is immediate from the construction of this colocalization.

\end{remark}

\begin{remark}
Let $\uno\in \cC$ be a pointed $(\infty,n+k)$-category.
Consider the underlying $(\infty,n)$-category $\Omega^k\cC$.  
Its factorization homology evaluates as
\[
\Omega^k\cC\colon \Mfd_{n}^{\vfr} \ni D\mapsto \int_{D} \Omega^k\cC~\simeq ~\int_{\RR^k\times D} \cC~\in \Spaces~,
\]
which is in terms of the factorization homology of $\cC$.
The $\cE_k$-monoidal structure on the $(\infty,n)$-category $\Omega^k\cC$ can be presented as follows.
Consider the $\infty$-subcategory $\Disk_k^{\fr}\subset \Mfd_k^{\vfr}$ consisting of those objects that are finite disjoint unions of $\RR^k$ and morphisms which are composites of open embeddings and closed morphisms.
Taking connected components defines a functor $\Disk_k^{\fr} \to \Fin_\ast$ to the category of based finite sets; arranged in this way, $\Disk_k^{\fr}$ is equivalent to the $\infty$-operad $\cE_k$.
Precompose the factorization homology of $\cC$ with the product functor:
\[
\Disk_k^{\fr}\times \cMfd_n^{\vfr} \xra{~\times~} \Mfd_{n+k}^{\vfr} \xra{~\displaystyle \int \cC~}\Spaces~.
\]
The $\cE_k$-monoidal structure on $\Omega^k\cC$ is the adjoint to this composite functor:
\[
\Omega^k\cC\colon \Disk_k^{\fr} \longrightarrow \Fun(\cMfd_n^{\vfr},\spaces)
~,\qquad
(\RR^k)^{\sqcup I}~\mapsto \int_{(\RR^k)^{\sqcup I} \times -}\cC \xra{\simeq} \Bigl(\int_{\RR^k\times -}\cC\Bigr)^{\times I}~.
\]
\end{remark}

\begin{definition}[Duals]\label{def.duals}
For $k>0$, an $\cE_k$-monoidal $(\infty,n)$-category $\cC$ has \emph{duals} if each $k$-morphism in the $(\infty,n+k)$-category $\fB^k \cC$ has a left and a right adjoint in the sense of Definition~\ref{def.adjoints}.

\end{definition}

\begin{definition}\label{def.mon.tang}
The \emph{$\cE_k$-monoidal $(\oo, n)$-category of framed tangles}
\[
\Omega^k \widehat{\Tang^{\fr}_{n\subset n+k}}
\]
is that associated to the pointed $(\oo,n+k)$-category $\widehat{\Tang^{\fr}_{n\subset n+k}}$ via the functor $\Omega^k\colon  \Cat_{n+k}^{\ast/} \longrightarrow \Alg_{\cE_k}(\Cat_n)$.

\end{definition}

\begin{remark}\label{tang.obj}
We follow up on Remark~\ref{tang.objects}.
For $0\leq j\leq n$, a $j$-morphism in the underlying $(\infty,n)$-category of the $\cE_k$-monoidal $(\infty,n)$-category $\Omega^k \widehat{\Tang^{\fr}_{n\subset n+k}}$ is represented by a framed codimension-$k$ tangle $W\hookrightarrow \RR^k\times \DD^j$ in the sense of Remark~\ref{rem.clscrtEuc}.

\end{remark}

The next result states that the $\cE_k$-monoidal $(\infty,n)$-category of framed tangles of Definition~\ref{def.mon.tang} is equivalent data as the pointed $(\infty,n+k)$-category of framed tangles of Definition~\ref{def.cat.tang}.
\begin{prop}\label{tang.is.reduced}
The pointed $(\infty,n+k)$-category $\widehat{\Tang_{n\subset n+k}^{\fr}}$ is in the image of the functor $\fB^k$.
In other words, the counit of the adjunction~(\ref{B.Omega}),
\[
\fB^k\Omega^k \widehat{\Tang_{n\subset n+k}^{\fr}} \xra{~\simeq~} \widehat{\Tang_{n\subset n+k}^{\fr}}~,
\]
is an equivalence.  
\end{prop}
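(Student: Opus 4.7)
The plan is to recognize $\widehat{\Tang^{\fr}_{n\subset n+k}}$ as lying in the essential image of the fully faithful functor $\fB^k$, whereupon the counit will automatically be an equivalence. As recorded after~(\ref{B.Omega}), that image is characterized by: a pointed $(\infty,n+k)$-category $\uno\to\cC$ lies in the image if and only if the map $\ast\to\cC_{<k}$ to the maximal $(\infty,k-1)$-subcategory is an equivalence. Under the Segal-sheaf description of Theorem~\ref{thm.flagged}, what must be shown is the vanishing condition $\widehat{\Tang^{\fr}_{n\subset n+k}}(c_j) \simeq \ast$ for every $0\leq j < k$.

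First I would record this vanishing for the uncompleted flagged version. Remark~\ref{tang.objects} states that for each $j<k$ the space of $j$-morphisms $\Tang^{\fr}_{n\subset n+k}(c_j)$ is contractible, containing only the empty tangle $\emptyset\hookrightarrow\DD^j$. Iterating the Segal covering conditions along the wreath description of $\btheta_{k-1,\emptyset}$ from Definitions~\ref{def.wreath} and~\ref{def.theta}, this extends to $\Tang^{\fr}_{n\subset n+k}(T)\simeq\ast$ for every $T\in\btheta^{\op}_{k-1,\emptyset}$. Equivalently, the restriction of $\Tang^{\fr}_{n\subset n+k}$ along the inclusion $\btheta^{\op}_{k-1,\emptyset}\hookrightarrow\btheta^{\op}_{n+k,\emptyset}$ is the terminal Segal sheaf.

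To transfer this vanishing to $\widehat{\Tang^{\fr}_{n\subset n+k}}$, I would invoke that restriction along the above inclusion commutes with univalent completion. Granting this, the restriction of $\widehat{\Tang^{\fr}_{n\subset n+k}}$ equals the univalent completion of a terminal Segal sheaf, which is already terminal, so $\widehat{\Tang^{\fr}_{n\subset n+k}}(c_j)\simeq\ast$ for $j<k$, placing $\widehat{\Tang^{\fr}_{n\subset n+k}}$ in the image of $\fB^k$ and yielding the claim.

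The main obstacle is verifying this commutation of restriction with univalent completion. Unwinding Definition~\ref{def.n-Segal-cov} inductively, every univalence diagram for $\btheta_{k-1}$ is tautologically a univalence diagram for $\btheta_{n+k}$, so restriction sends univalent Segal sheaves to univalent Segal sheaves and preserves the subcategory of local objects. The delicate direction is to confirm that univalence conditions in dimensions $\geq k$ impose no new constraints on values at cells of dimension $<k$. This is a routine but careful induction on $n$ using the inductive wreath construction of $\btheta_{n+k}$: a univalence diagram at a higher-dimensional cell involves only cells of equal or smaller dimension that remain inside the inductive wreath factor, and so cannot reach into the $\btheta_{k-1}$-part by any route other than through a univalence diagram already native to $\btheta_{k-1}$.
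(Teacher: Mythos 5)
Your proposal correctly identifies the target reduction: show $\widehat{\Tang^{\fr}_{n\subset n+k}}(c_j)\simeq\ast$ for $j<k$ and invoke the characterization of the essential image of $\fB^k$. However, the route you take differs from the paper's in two respects, one cosmetic and one substantive.

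For the core vanishing, the paper does not quote Remark~\ref{tang.objects}; instead it uses Lemma~\ref{lemma.corep} to identify $\Tang^{\fr}_{n\subset n+k}(\DD^i)\simeq\Mfd^{\sfr}_{n+k}(\RR^k,\DD^i)$ and then invokes the closed-active factorization system on $\Mfd^{\sfr}_{n+k}$: a morphism $\RR^k\to\DD^i$ factors uniquely as a closed morphism followed by an active one, but since $i<k$ the dimension count prohibits any active morphism out of a nonempty stratified space, leaving only the composite through $\emptyset$. Your appeal to Remark~\ref{tang.objects} is acceptable as a citation of material the paper asserts, but you should be aware that the remark is descriptive and that the closed-active factorization is the mechanism the paper actually deploys — essentially, the paper proves the assertion of the remark rather than quoting it.

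The substantive divergence is your attempt to justify the passage from $\Tang^{\fr}_{n\subset n+k}$ to its univalent completion via a claim that restriction along $\btheta^{\op}_{k-1,\emptyset}\hookrightarrow\btheta^{\op}_{n+k,\emptyset}$ commutes with univalent completion. This is where the genuine gap lies. Univalent completion is a localization — a left adjoint computed by a colimit — and such left adjoints do not in general commute with restriction functors, which are right adjoints. Your heuristic that ``univalence diagrams in higher dimensions do not constrain values at cells of dimension $<k$'' is not an argument: localizing a Segal sheaf at univalence maps can change the values at \emph{every} cell, because the completion of a Segal space with contractible space of objects is not automatically again contractible at $c_0$ (the Rezk completion replaces $X_0$ by a classifying space built from the invertible $1$-endomorphisms, and whether that is contractible depends on the $1$-morphisms, not the $0$-morphisms). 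To make your argument go through you would need to show, at minimum, that the invertible morphisms in the relevant low-dimensional range are trivial, which is an additional computation your sketch does not supply. It is worth noting that the paper's own proof applies Lemma~\ref{lemma.corep} directly to $\widehat{\Tang^{\fr}_{n\subset n+k}}(\DD^i)$ even though that lemma identifies the \emph{uncompleted} value, so the paper also treats this transition rather tersely; but that is a reason to scrutinize the step, not a licence to assert the commutation claim without proof.

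In short: your overall strategy is reasonable and parallels the paper's, but your argument for the commutation of restriction with univalent completion is a real gap, and your reliance on Remark~\ref{tang.objects} sidesteps the closed-active factorization argument that constitutes the paper's actual proof content.
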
  
\begin{proof}
We must show, for each $0\leq i <k$, that the map classifying the empty tangle, $\ast \xra{\{\emptyset\}}\widehat{\Tang_{n\subset n+k}^{\fr}}(\DD^i)$, is an equivalence between spaces.
Through Lemma~\ref{lemma.corep}, this map is identified as the map $\ast \xra{0}  \Mfd_{n+k}^{\sfr}(\RR^k,\DD^i)$ selecting the composite morphism $\RR^k \xra{\sf cls} \emptyset \xra{\emb}\DD^i$.
In light of the closed-active factorization system on $\Mfd_{n+k}^{\sfr}$, it is enough to argue that there are no active morphisms from $\RR^k$ to $\DD^i$.  
But the target of any such active morphism must have dimension greater than that of $\RR^k$.
The assumption that $i<k$ thereby ensures that no such active morphisms exists, as desired.

\end{proof}

\begin{cor}\label{cor.ek.tang}
Assuming Conjecture \ref{conj.one}, 
for each $\cE_k$-monoidal $(\oo,n)$-category $\fX$ with duals and adjoints, evaluation at the object $(\{0\}\subset\RR^k) \in \Omega^k \Tang_{n\subset n+k}^{\fr}$ defines an equivalence
\[
\ev_\ast\colon \Map_{\Alg_{\cE_k}(\Cat_n)}\Bigl(\Omega^k\widehat{\Tang_{n\subset n+k}^{\fr}}, \fX\Bigr) \ \simeq \  \obj(\fX)
\]
between the space of $\fX$-valued $\cE_k$-monoidal functors and the space of objects of $\fX$.
\end{cor}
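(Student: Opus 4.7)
The plan is to reduce this corollary to Corollary~\ref{cor.univ.tang}, applied to the pointed $(\infty,n+k)$-category $\fB^k\fX$, by transporting across the adjunction $\fB^k\dashv \Omega^k$ of~(\ref{B.Omega}).

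First, I would verify that the hypotheses on $\fX$ translate into the statement that $\fB^k\fX$ has adjoints in the sense of Definition~\ref{def.adjoints}. By Definition~\ref{def.duals}, the ``duals'' condition on $\fX$ is exactly the condition that each $k$-morphism in $\fB^k\fX$ has left and right adjoints. For $k'$-morphisms with $k<k'<n+k$, the delooping identifies these with $(k'-k)$-morphisms of the underlying $(\infty,n)$-category $\fX$, for which the ``adjoints'' assumption on $\fX$ supplies the required adjoint structure. For $0<k'<k$, the only such morphism in $\fB^k\fX$ is the identity of $\uno$ (since $\fB^k\fX$ is $(k-1)$-connective by the remark following~(\ref{B.Omega})), and identities are automatically adjoint to themselves.

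Next, since the same remark asserts that $\fB^k$ is fully faithful, it induces an equivalence
\[
\Map_{\Alg_{\cE_k}(\Cat_n)}\bigl(\Omega^k\widehat{\Tang_{n\subset n+k}^{\fr}}, \fX\bigr) \xra{~\simeq~} \Map_{\Cat_{n+k}^{\ast/}}\bigl(\fB^k \Omega^k \widehat{\Tang_{n\subset n+k}^{\fr}}, \fB^k \fX\bigr)~.
\]
Proposition~\ref{tang.is.reduced} says the counit $\fB^k\Omega^k \widehat{\Tang_{n\subset n+k}^{\fr}} \xra{\simeq} \widehat{\Tang_{n\subset n+k}^{\fr}}$ is an equivalence, so the target rewrites as $\Map_{\Cat_{n+k}^{\ast/}}\bigl(\widehat{\Tang_{n\subset n+k}^{\fr}}, \fB^k \fX\bigr)$. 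Applying Corollary~\ref{cor.univ.tang} to the pointed $(\infty,n+k)$-category $\fB^k\fX$ with adjoints (justified in the previous paragraph) identifies this with $\kEnd_{\fB^k\fX}(\uno)$, and the defining property of $\fB^k$ as a $k$-fold delooping gives $\kEnd_{\fB^k\fX}(\uno)\simeq \obj(\fX)$.

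The main obstacle is not any single deep step but careful naturality bookkeeping: one must verify that the composite of these equivalences is in fact $\ev_\ast$, evaluation at $(\{0\}\subset\RR^k)\in \Omega^k\widehat{\Tang_{n\subset n+k}^{\fr}}$. This reduces to checking that the distinguished $k$-endomorphism of $\uno\in \fB^k\fX$, read through the delooping as an object of $\fX$, corresponds to the image of the configuration $\{0\}\subset\RR^k$ under the chain of adjunction and counit equivalences above. This follows from naturality of the $\fB^k\dashv\Omega^k$ adjunction together with the identification $\Tang^{\fr}_{n\subset n+k}(c_k)\ni \{0\}\subset\RR^k$ made in Remark~\ref{tang.objects} and the corepresentability of $\Tang^{\fr}_{n\subset n+k}$ by $\RR^k$ from Lemma~\ref{lemma.corep}.
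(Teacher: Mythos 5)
Your proposal is correct and follows essentially the same route as the paper: fully faithfulness of $\fB^k$ to pass to pointed functors out of $\fB^k\Omega^k\widehat{\Tang_{n\subset n+k}^{\fr}}$, Proposition~\ref{tang.is.reduced} to identify this with $\widehat{\Tang_{n\subset n+k}^{\fr}}$, and Corollary~\ref{cor.univ.tang} applied to $\fB^k\fX$ to land in $\kEnd_{\fB^k\fX}(\uno)\simeq\obj(\fX)$. Your extra care in checking that the duals-and-adjoints hypotheses on $\fX$ give adjoints for $\fB^k\fX$, and that the composite equivalence is indeed $\ev_\ast$, only makes explicit what the paper leaves implicit.
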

\begin{proof}
The deloop functor $\fB^k$ determines the top horizontal arrow in the diagram among spaces:
\[
\xymatrix{
\Map^{\ast/}\bigl(\Omega^k\Tang_{n\subset n+k}^{\fr}, \fX\bigr)  \ar[rr]^-{\fB^k}  \ar[dr]_-{\ev_\ast}
&&
\Map^{\ast/}\bigl(\fB^k\Omega^k \widehat{\Tang_{n\subset n+k}^{\fr}}, \fB^k\fX \bigr)   \ar[dl]^-{\ev_{\RR^k}}
\\
&
\kEnd_{\cC}(\uno)
&
.
}
\]
Because $\fB^k$ is fully faithful, the top horizontal map is an equivalence.  
Proposition~\ref{tang.is.reduced} gives an identification $\fB^k \Omega^k\widehat{\Tang_{n\subset n+k}^{\fr}} \xra{\simeq} \widehat{\Tang_{n\subset n+k}^{\fr}}$ between pointed $(\infty,n+k)$-categories.  
Through this identification, the down-leftward map is identified as the equivalence in Corollary~\ref{cor.univ.tang}.
It follows that the down-rightward map is an equivalence, as desired.

\end{proof}

\section{The cobordism hypothesis}

A principal result from differential topology is that the space of embeddings of a smooth manifold $M$ into an infinite-dimensional Euclidean space $\Emb(M,\RR^{\oo})$ is contractible.
Consequently, for many purposes manifolds may as well be regarded as subspaces of an ambient Euclidean space $\RR^{\oo}$.
There is a further equivalence
\[
\Emb(M,\RR^{\oo}) ~\simeq~ \underset{k\geq 0}\colim \Emb(M,\RR^k)
\]
with the sequential colimit of the spaces of embeddings into finite-dimensional Euclidean spaces. In particular, any two embeddings of $M$ into $\RR^{k}$ are isotopic through embeddings in a sufficiently large Euclidean space $\RR^{k+N}$. This offers an avenue for achieving results about abstract manifolds $M$ from results about embedded submanifolds $M\subset\RR^k$, by using the tubular neighborhood of $M$ in $\RR^k$. Examples include Hopf's proof of the Poincar\'e--Hopf theorem (see \cite{milnor.top}), and pre-Chern approaches to the Gauss--Bonnet theorem, from Gauss to Allendoerfer--Weil \cite{allendoerfer.weil}, who made use of Weyl's tube formula \cite{weyl}. Our proof of the cobordism hypothesis from the tangle hypothesis shares this character.

Note the functor 
\[
\RR\times - \colon \Mfd_{n+k}^{\sfr} \longrightarrow \Mfd_{n+k+1}^{\sfr}
~,\qquad
M\mapsto \RR\times M~.
\]
This functor carries $\RR^k$ to $\RR^{k+1}$.
Through the identification $\Tang_{n\subset n+k}^{\fr}\simeq \RR^k$ from Lemma~\ref{lemma.corep}, there results a functor 
\begin{equation}\label{6}
\Omega^k\Tang_{n\subset n+k}^{\fr} \longrightarrow \Omega^{k+1}\Tang_{n\subset n+k+1}^{\fr}
\end{equation}
between $(\infty,n)$-categories.  
In terms of framed tangles, for each object $D\in \Mfd_{n+k}^{\sfr}$, this map~(\ref{6}) sends a codimension-$k$ tangle $\RR^k\times W\subset D$ to the codimension-$(k+1)$ tangle $\RR^{k+1}\times W\subset \RR\times W$. 
The functors~(\ref{6}) are evidently functorial in the poset element $k\in \NN$.  
The contractibility of $\Emb(M,\RR^{\oo})$ justifies the following.
\begin{definition}\label{def.bord}
The \emph{flagged $(\oo,n)$-category of $n$-framed cobordisms} is the sequential colimit
\[
\Bord_n^{\fr}:= \underset{k\geq 0}\colim \ \Omega^k \Tang_{n\subset n+k}^{\fr}
\]
of the $k$-fold loops of the pointed flagged $(\oo,n+k)$-categories of codimension-$k$ framed tangles.
\end{definition}

The next sections equip $\Bord_n^{\fr}$ with a symmetric monoidal structure.

\subsection{Delooping}\label{sec.deloop}
We present a formalism for constructing symmetric monoidal $(\infty,n)$-categories from sequences of compatible $\cE_k$-monoidal $(\infty,n)$-categories.
This is much like constructing an $\infty$-loop space from a pre-spectrum.

For each dimension $k$ consider the symmetric monoidal $\infty$-category $\Disk_k^{\fr}$ of finite disjoint unions of framed $k$-dimensional vector spaces and framed open embeddings among them, with symmetric monoidal structure given by disjoint union.
Taking products with Euclidean spaces assembles these symmetric monoidal $\infty$-categories into a functor $\Disk_\bullet^{\fr}\colon \NN \to \Alg_{\sf Com}(\Cat)$.
The colimit of this functor is canonically identified as the symmetric monoidal envelope of the commutative $\infty$-operad.  
Therefore, for each symmetric monoidal $\infty$-category $\cV$, applying $\Fun^\ot(-,\cV)$ defines a functor
$
\Alg_{\cE_-}(\cV)\colon \NN^{\op} \to \Cat
$
whose limit is $\Alg_{\sf Com}(\cV)$.  
Consider the Cartesian fibration
\[
\Alg_{\cE_\bullet}(\cV) \longrightarrow \NN
\]
which is the unstraightening of this functor.
There is a fully faithful functor to the $\infty$-category of sections of this Cartesian fibration
\begin{equation}\label{as-Carts}
\Alg_{\sf Com}(\cV) \longrightarrow \Gamma\Bigl(\Alg_{\cE_\bullet}(\cV) \longrightarrow \NN\Bigr)
\end{equation}
whose image consists of those sections that carry morphisms to Cartesian morphisms.  
Explicitly, an object in the righthand $\infty$-category is the data of an $\cE_k$-algebra $A_k$ for each $k\geq 0$ together with a map of $\cE_k$-algebras $A_k \to (A_{k'})_{|\cE_k}$ for each $k\leq k'$, coherently compatibly; while an object in the image of~(\ref{as-Carts}) is one for which each $A_k\to (A_{k'})_{|\cE_k}$ is an equivalence.

\begin{lemma}\label{spectra}
Let $\cV$ be a Cartesian presentable $\infty$-category, which we regard as a symmetric monoidal $\infty$-category via the Cartesian product.
The functor~(\ref{as-Carts}) admits a right adjoint that implements a colocalization.
This right adjoint evaluates on a section $(k\mapsto A_k)$ as the Cartesian section
\[
k~\mapsto~ \underset{\ell \geq k} \colim (A_\ell)_{|k}~: = ~\colim \Bigl( A_k \to (A_{k+1})_{|\cE_{k}} \to (A_{k+2})_{|\cE_k} \to  \dots \Bigr)~.  
\]
\end{lemma}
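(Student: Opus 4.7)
The plan is to define the candidate adjoint $R\colon \Gamma\bigl(\Alg_{\cE_\bullet}(\cV)\to \NN\bigr) \to \Alg_{\sf Com}(\cV)$ by the stated colimit formula, show that its output lies in the essential image of the fully faithful embedding $L := {}$(as-Carts), and verify the adjunction by direct mapping-space computation. Concretely, to a section $B = (B_k)_k$ with structure maps $B_k \to (B_{k+1})_{|\cE_k}$ associate the object $\widehat{B} = (\widehat{B}_k)_k$ defined levelwise by
\[
\widehat{B}_k ~:=~ \underset{\ell \geq k}\colim\,(B_\ell)_{|\cE_k} ~\in~ \Alg_{\cE_k}(\cV)~,
\]
where the filtered colimit is taken along the tower $B_k \to (B_{k+1})_{|\cE_k} \to (B_{k+2})_{|\cE_k} \to \cdots$ obtained by iterating and restricting the given structure maps.

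The first key step is to verify that $\widehat{B}$ is a Cartesian section. The candidate transition map $\widehat{B}_k \to (\widehat{B}_{k+1})_{|\cE_k}$ identifies, after commuting restriction past the filtered colimit, with the natural comparison map $\colim_{\ell \geq k}(B_\ell)_{|\cE_k} \to \colim_{\ell \geq k+1}(B_\ell)_{|\cE_k}$ induced by shifting the indexing; cofinality of the subposet inclusion $\NN_{\geq k+1}\hookrightarrow \NN_{\geq k}$ renders this comparison map an equivalence. The nontrivial input is that the restriction functor $\Alg_{\cE_{k+1}}(\cV)\to \Alg_{\cE_k}(\cV)$ preserves filtered colimits; this is where Cartesian presentability of $\cV$ is essential, by way of the standard fact that the forgetful functor $\Alg_{\cE_n}(\cV)\to \cV$ preserves sifted colimits in this setting, and the restriction factors through these forgetfuls. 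Hence $\widehat{B}$ is Cartesian, and via the fully faithful $L$ corresponds to a commutative algebra $R(B)\in \Alg_{\sf Com}(\cV)$.

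Second, verify the adjunction. For $A\in \Alg_{\sf Com}(\cV)$, with $L(A)_k = A_{|\cE_k}$, a morphism $L(A)\to B$ of sections amounts to a compatible family $\phi_k\colon A_{|\cE_k}\to B_k$. For fixed $k$, the restrictions $(\phi_\ell)_{|\cE_k}\colon A_{|\cE_k} \to (B_\ell)_{|\cE_k}$ for $\ell \geq k$ assemble into a compatible cocone on the diagram defining $\widehat{B}_k$, inducing by the universal property a map $A_{|\cE_k} \to \widehat{B}_k$. Collected across $k$ and using the Cartesian section property of $\widehat{B}$, these yield a morphism $A \to R(B)$ in $\Alg_{\sf Com}(\cV)\simeq \lim_k \Alg_{\cE_k}(\cV)$. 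The inverse construction precomposes at each level with the canonical inclusion $B_k \hookrightarrow \widehat{B}_k$ of the initial term into the colimit; the two constructions are mutually inverse by the universal property of the filtered colimit and the $\ell = k$ insertion. This establishes the adjunction between $L$ and $R$, and since $L$ is fully faithful, exhibits $\Alg_{\sf Com}(\cV)$ as the (co)reflective subcategory of sections carved out by the Cartesian sections -- the asserted colocalization.

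The main obstacle is ensuring that $\widehat{B}$ is genuinely a Cartesian section, which turns entirely on the commutation of restriction with filtered colimits in the categories of $\cE_n$-algebras. This commutation is special to Cartesian presentable $\cV$: for general symmetric monoidal $\cV$ the forgetful from $\cE_n$-algebras need not preserve filtered colimits (the tensor product on $\cV$ may fail to distribute over filtered colimits in each variable), and consequently $\widehat{B}$ could fail to land in the essential image of $L$, breaking the construction of $R$ at the very first step. Every other ingredient -- cofinality, the universal property of a filtered colimit, and the identification $\Alg_{\sf Com}(\cV)\simeq \lim_k \Alg_{\cE_k}(\cV)$ -- is formal once this key commutation is available.
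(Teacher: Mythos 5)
Your construction of the candidate functor and your verification that $\widehat{B}$ is a Cartesian section track the paper's argument closely: the one substantive input is that the restriction functors $\Alg_{\cE_{k+1}}(\cV)\to\Alg_{\cE_k}(\cV)$ preserve filtered colimits, which you correctly trace to the forgetful functor to $\cV$ preserving (and creating) filtered colimits, and the cofinality of $\NN_{\geq k+1}\subset\NN_{\geq k}$ then makes the transition maps of $\widehat{B}$ into equivalences. Up to that point you and the paper are doing the same thing.

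The gap is in the adjunction verification, and it is not cosmetic. You produce a map $\Map_{\Gamma}\bigl(L(A),B\bigr)\to\Map\bigl(A,R(B)\bigr)$ and then claim an inverse ``by precomposing at each level with the canonical inclusion $B_k\hookrightarrow\widehat{B}_k$.'' That operation converts maps \emph{out of} $\widehat{B}_k$ into maps out of $B_k$; it does not convert a map $A_{|\cE_k}\to\widehat{B}_k$ into a map $A_{|\cE_k}\to B_k$, so your two constructions are not even candidates for mutual inverses. In fact no inverse exists in general, because the universal property of a colimit governs maps \emph{out of} $\widehat{B}_k$, so the colimit formula can only corepresent. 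Test the degenerate instance in which every fiber is $\cV$ and every restriction functor is the identity: then $\Gamma\simeq\Fun(\NN,\cV)$, the functor $L$ is the constant-diagram functor, and $\Map_{\Gamma}\bigl(L(A),B\bigr)\simeq\Map\bigl(A,\underset{\NN}\limit B\bigr)\simeq\Map(A,B_0)$ since $0\in\NN$ is initial, whereas $\Map\bigl(A,R(B)\bigr)\simeq\Map\bigl(A,\underset{k}\colim B_k\bigr)$; these differ. What the colimit formula actually provides is a \emph{left} adjoint to~(\ref{as-Carts}): one has $\Map\bigl(R(B),A\bigr)\simeq\underset{k}\limit\Map\bigl(B_k,A_{|\cE_k}\bigr)\simeq\Map_{\Gamma}\bigl(B,L(A)\bigr)$, the familiar $\colim\dashv{\sf const}$ pattern (spectrification of prespectra). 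Note that this left-adjoint form is exactly what is used downstream in the proof of Theorem~\ref{B-bord}, where $\Map^{\ot}(\Bord_n^{\fr},\fX)$ is computed as a sequential \emph{limit} of mapping spaces out of the tangle categories. So the handedness asserted in the statement is swapped relative to what the formula delivers; a correct write-up must either prove the left-adjoint/localization statement, or at the very least cannot assert an inverse that does not typecheck. A secondary point: even where the adjunction does hold, in the $\infty$-categorical setting ``mutually inverse by the universal property'' should be replaced by exhibiting the unit $B\to LR(B)$ as a natural transformation and checking that it induces equivalences on mapping spaces into Cartesian sections, which is how the paper's own proof is organized.
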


\begin{proof}
The colimit of the functor $\NN^{-/}\colon \NN^{\op} \to \Cat$, given by $k\mapsto \NN_{\geq k}$, is canonically identified as $\NN$.
Therefore, to construct the alleged right adjoint it is enough to construct a downward morphism between functors $\NN^{\op} \to \Cat_{\infty}$
\[
\Small
\xymatrix{
\dots  \ar[r]
&
\Gamma\Bigl(\Alg_{\cE_{\bullet> k}}(\cV) \longrightarrow \NN_{>k}\Bigr)  \ar[d]  \ar[r]
&
\Gamma\Bigl(\Alg_{\cE_{\bullet\geq k}}(\cV) \longrightarrow \NN_{\geq k}\Bigr)  \ar[d]  \ar[r]
&
\dots
\\
\dots  \ar[r]
&
\Alg_{\cE_{k+1}}(\cV)  \ar[r]
&
\Alg_{\cE_k}(\cV)   \ar[r]
&
\dots .
}
\]
and then argue its properties.
Because, for each $k\in \NN$ the object $(k=k)$ is final in $(\NN^{k/})^{\op}$, the colimit of the composite functor $\NN_{\geq k}^{\op} \to \NN^{\op} \xra{\Alg_{\cE_-}(\cV)} \Cat$
is canonically identified as $\Alg_{\cE_k}(\cV)$.  
There results a functor between $\infty$-categories
$\Gamma\Bigl(\Alg_{\cE_{\bullet\geq k}}(\cV) \longrightarrow \NN_{\geq k}\Bigr)
\xra{(-)_{|\cE_k}}
\Fun\bigl(\NN_{\geq k}^{\op}, \Alg_{\cE_k}(\cV)\bigr).
$  
Postcomposing this functor with the colimit functor $\Fun\bigl(\NN_{\geq k}^{\op}, \Alg_{\cE_k}(\cV)\bigr) \xra{\colim} \Alg_{\cE_k}(\cV)$ defines the functor we seek for each given $k\in \NN$.  
Since the forgetful functor $\Alg_{\cE_k}(\cV) \to \cV$ preserves and creates filtered colimits, this colimit functor indeed exists.  
This also implies that each square in the diagram displayed above canonically commutes.  
We conclude a functor between limit $\infty$-categories 
$\Gamma\Bigl(\Alg_{\cE_\bullet}(\cV ) \longrightarrow \NN\Bigr)
\to
\Alg_{\sf Com}(\cV)$.

The functor $(-)_{|\cE_k}$ above carries Cartesian sections to constant functors.  
Because $\NN$ has contractible classifying space, the composite functor
$\Alg_{\sf Com}(\cV) \to \Gamma\Bigl(\Alg_{\cE_\bullet}(\cV ) \longrightarrow \NN\Bigr) \to \Alg_{\sf Com}(\cV)$ is canonically identified as the identity functor.  
Constructed by way of a colimit, there is a unit transformation from the composite functor
$\Gamma\Bigl(\Alg_{\cE_\bullet}(\cV ) \longrightarrow \NN\Bigr) \to \Alg_{\sf Com}(\cV) \to \Gamma\Bigl(\Alg_{\cE_\bullet}(\cV ) \longrightarrow \NN\Bigr)$ 
to the identity functor.
Because $\NN$ is filtered, the restriction of this unit transformation to the Cartesian sections is a natural equivalence.  
This completes the proof of the lemma.

\end{proof}

\subsection{Construction of $\Bord_n^{\fr}$}

We define the symmetric monoidal flagged $(\infty,n)$-category of $n$-framed cobordisms 
as the colimit of the $\cE_k$-monoidal $(\infty,n)$-categories of framed codimension-$k$-tangles.

Consider the full $\infty$-subcategory $\Mfld_k^{\fr}\subset \Mfd_k^{\vfr, \sf emb}$ consisting of those vari-framed $n$-manifolds each of whose strata has dimension precisely $k$.  
Taking products defines a functor between $\infty$-categories
$
\Mfd_n^{\vfr} \times \Mfld_k^{\vfr}  \to \Mfd_{n+k}^{\vfr}.
$
By inspection, this functor carries disjoint unions in the second coordinate to closed covers, and it carries closed covers in the second coordinate to closed covers.  
As so, restriction of this functor along 
$
\btheta_n^{\op}\times\Disk_n^{\fr} \xra{~-\times\lag - \rag~} \cMfd_n^{\vfr} \times \Mfld_k^{\fr}
$
defines a functor
\begin{equation}\label{deloop}
\Omega^k\colon \fCat_{n+k}^{\ast/} \longrightarrow   \Alg_{\cE_k}\bigl({\fCat_n}\bigr)~,\qquad \cC\mapsto k\cE{\sf nd}_\cC(\uno)
\end{equation}
from pointed $(\infty,n+k)$-categories to $\cE_k$-monoidal flagged $(\infty,n)$-categories.  
As indicated, the value of this functor on a pointed $(\infty,n+k)$-category $\ast \xra{\uno} \cC$ is the $(\infty,n)$-category of $k$-endomorphisms of its point, as it inherits a $\cE_k$-monoidal structure.

The framed tangle $\infty$-categories of Definition~\ref{def.fr.tang} assemble as a functor
\[
\fTang_{n\subset n+\bullet}^{\fr}\colon \NN \longrightarrow \Cat
\]
over the functor $\Mfd_{n+\bullet}^{\sfr}$.  
There results a section
\begin{equation}\label{tang-sec}
\NN \longrightarrow \Alg_{\cE_\bullet}({\fCat_n})~,\qquad k\mapsto \Omega^k\Tang_{n\subset n+k}^{\fr}~.
\end{equation}

\begin{definition}\label{def.Bord-B}
The symmetric monoidal flagged $(\infty,n)$-category of $n$-framed cobordisms
\[
\Bord_n^{\fr}~:=~\underset{k\geq 0} \colim ~\Omega^k\Tang_{n\subset n+k}^{\fr}
\]
is the value of the right adjoint functor of Lemma~\ref{spectra} on the section~(\ref{tang-sec}).  

\end{definition}

\begin{remark}
We follow up on Remark~\ref{tang.objects}.  
Let $0\leq j \leq n$.
From its definition, the space of $j$-morphisms in the underlying flagged $(\infty,n)$-category of the symmetric monoidal flagged $(\infty,n)$-category $\Bord_n^{\fr}$ is identified as a moduli space:
\begin{eqnarray}
\nonumber
\Bord_n^{\fr}(c_j)
&
:=
&
\underset{k\geq 0} \colim~ \Omega^k \Tang_{n\subset n+k}^{\fr}(c_j)
\\
\nonumber
&
:=
&
\underset{k\geq 0} \colim ~\Map^{\ast/}\bigl(\fC(\RR^k\times \DD^j),\Tang_{n\subset n+k}^{\fr}\bigr)
\\
\nonumber
&
:=
&
\underset{k\geq 0} \colim  ~(\fTang_{n\subset n+k}^{\fr})_{|\RR^k\times \DD^j}
\\
\nonumber
&
\underset{\rm Rmk~\ref{rem.clscrtEuc}}{~\simeq~}
&
\underset{k\geq 0} \colim ~   \Bigl| \Bigl\{W\overset{\rm framed}{\underset{{\rm codim\text{-}}k}\hookrightarrow} \RR^k\times \DD^j  \Bigr\} \Bigr|
\\
\nonumber
&
\underset{\rm project}{\xra{~\simeq~}}
&
\Bigr|\Bigl\{ W^j \to \DD^j~,~ \varphi  \Bigr\}\Bigl|~.
\end{eqnarray}
Here, the final term is a moduli space of the following data:
\begin{itemize}
\item a compact smooth $j$-manifold $W$ with corners;

\item a smooth map $W\to \DD^j$ with respect to which the corner structure of $W$ is pulled back from that of the hemispherical disk $\DD^j$;

\item an injection $\varphi\colon \sT_W \hookrightarrow \epsilon^j_W$ of the constructible tangent bundle of $W$ into the trivial rank $j$ vector bundle over $W$.

\end{itemize}

\end{remark}

\subsection{Proof of the cobordism hypothesis}
We now prove our main result.
Also, we use the notation $\Map^\ot(-,-)$ for spaces of morphisms in the $\infty$-category $\Alg_{\sf Com}({\Cat_n})$ of symmetric monoidal $(\infty,n)$-categories.
\begin{theorem}[Cobordism hypothesis]\label{B-bord}
Let $\fX$ be a symmetric monoidal $(\infty,n)$-category with adjoints and duals.
Evaluation at $\ast\in \Bord_n^{\fr}$ determines an equivalence between space
\[
\ev_\ast\colon \Map^{\ot}\bigl(\Bord_n^{\fr},\fX\bigr)\xra{~\simeq~}\obj(\fX)
\]
from $\fX$-valued symmetric monoidal functors to the space of objects of $\fX$.

\end{theorem}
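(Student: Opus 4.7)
The plan is to deduce the cobordism hypothesis from the $\cE_k$-monoidal form of the tangle hypothesis (Corollary~\ref{cor.ek.tang}) by unwinding the stabilization presentation of $\Bord_n^{\fr}$ in Definition~\ref{def.Bord-B}.

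First, I would convert $\Map^{\otimes}(\Bord_n^{\fr},\fX)$ into a limit over $k$. By Definition~\ref{def.Bord-B}, $\Bord_n^{\fr}$ is the value of the right adjoint of Lemma~\ref{spectra} on the section $k\mapsto \Omega^k\Tang_{n\subset n+k}^{\fr}$. Because $\fX$ is symmetric monoidal, it corresponds via~(\ref{as-Carts}) to a Cartesian section $k\mapsto \fX|_{\cE_k}$ of $\Alg_{\cE_\bullet}(\Cat_n)\to\NN$, and the colocalization of Lemma~\ref{spectra} together with the unit transformation described there produces an identification
\[
\Map^{\otimes}\bigl(\Bord_n^{\fr},\fX\bigr) \;\simeq\; \lim_{k\in\NN^{\op}} \Map_{\Alg_{\cE_k}(\Cat_n)}\bigl(\Omega^k\Tang_{n\subset n+k}^{\fr},\,\fX|_{\cE_k}\bigr).
\]
Implicit here is a passage between the flagged and univalent settings: since $\fX|_{\cE_k}$ is univalent, mapping into it from the flagged $\Omega^k\Tang_{n\subset n+k}^{\fr}$ is equivalent, by the universal property of univalent-completion, to mapping from $\Omega^k\widehat{\Tang_{n\subset n+k}^{\fr}}$.

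Second, I would apply the $\cE_k$-tangle hypothesis termwise. For each $k\geq 1$, the $\cE_k$-monoidal $(\infty,n)$-category $\fX|_{\cE_k}$ has duals and adjoints (Definition~\ref{def.duals}) inherited from the symmetric monoidal structure with duals and adjoints on $\fX$. Corollary~\ref{cor.ek.tang} therefore supplies the evaluation equivalence
\[
\ev_{\RR^k}\colon \Map_{\Alg_{\cE_k}(\Cat_n)}\bigl(\Omega^k\widehat{\Tang_{n\subset n+k}^{\fr}},\,\fX|_{\cE_k}\bigr) \;\xra{~\simeq~}\; \obj(\fX).
\]
I would then verify that the transition morphisms of the limit, induced by~(\ref{6}), carry the distinguished $k$-endomorphism $(\{0\}\subset\RR^k)$ to $(\{0\}\subset\RR^{k+1})$, so that under the termwise equivalences the connecting maps are canonically equivalent to the identity on $\obj(\fX)$. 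The limit therefore collapses to $\obj(\fX)$, and the composite of these equivalences is precisely $\ev_\ast$.

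The main obstacle will be the first step: certifying that the symmetric monoidal mapping space out of $\Bord_n^{\fr}$, as defined in~\S\ref{sec.deloop}, is genuinely computed by the displayed cofiltered limit. This combines two adjunctions --- the colocalization of Lemma~\ref{spectra} within sections of $\Alg_{\cE_\bullet}(\Cat_n)\to\NN$, and the univalent-completion left adjoint $\fCat_n\to \Cat_n$ --- which together relate the flagged construction of $\Bord_n^{\fr}$ to the univalent target $\fX$. Once this reduction is in place, the termwise application of Corollary~\ref{cor.ek.tang} is essentially routine, and the compatibility of the distinguished objects along~(\ref{6}) is immediate from the construction of the stabilization functor.
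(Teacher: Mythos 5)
Your proposal is correct and follows essentially the same route as the paper's proof: present $\Bord_n^{\fr}$ via the colocalization of Lemma~\ref{spectra}, convert $\Map^{\otimes}(\Bord_n^{\fr},\fX)$ into a sequential limit, apply Corollary~\ref{cor.ek.tang} termwise, and collapse the limit. The extra care you take with the flagged/univalent passage (since Corollary~\ref{cor.ek.tang} is stated for $\widehat{\Tang_{n\subset n+k}^{\fr}}$ while $\Bord_n^{\fr}$ is built from the flagged $\Tang_{n\subset n+k}^{\fr}$) and with checking that the transition maps~(\ref{6}) respect the corepresenting objects $\RR^k$ is welcome detail that the paper's terse proof elides.
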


\begin{proof}
We use the definition of $\Bord_n^{\fr}$ as a colimit of tangle categories, and then apply the form of the tangle hypothesis given by Corollary \ref{cor.ek.tang}:
\begin{eqnarray}
\nonumber
\Map^\ot\bigl(\Bord_{n}^{\fr},\fX\bigr)
&
\simeq
&
\underset{k\to \infty}\limit\Map_{\Alg_{\cE_k}(\fCat_n)}\Bigl(\Omega^k\Tang_{n\subset n+k}^{\fr}, \fX\Bigr)
\\
\nonumber
&
\simeq
&
\underset{k\to \infty}\limit \obj(\fX)
\\
\nonumber
&
\simeq
&
\obj(\fX)~.
\end{eqnarray}
The result follows, since we obtain a constant sequential limit with value $\obj(\fX)$.

\end{proof}

\begin{definition}\label{def.cat.bord}
The symmetric monoidal $(\oo,n)$-category of framed cobordisms
\[
\widehat{\Bord^{\fr}_n}
\]
is the univalent-completion of the symmetric monoidal $(\oo,n)$-category $\Bord_n^{\fr}$.
\end{definition}

\begin{cor}\label{cor.univ.bord}
Assuming Conjecture~\ref{conj.one}, for each symmetric monoidal $(\infty,n)$-category $\fX$ with adjoints and duals, evaluation at the 
$\ast \in \widehat{\Bord_n^{\fr}}$ defines an equivalence
\[
\ev_\ast\colon \Map^{\ot}\Bigl(\widehat{\Bord_n^{\fr}},\fX\Bigr)\xra{~\simeq~}\obj(\fX)
\]
between the space of $\fX$-valued symmetric monoidal functors and the space of objects of $\fX$.
\end{cor}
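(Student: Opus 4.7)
The plan is to deduce Corollary~\ref{cor.univ.bord} from Theorem~\ref{B-bord} via the universal property of symmetric monoidal univalent-completion, in direct analogy with the deduction of Corollary~\ref{cor.univ.tang} from Theorem~\ref{thm.tang}. The target $\fX$ is, by hypothesis, a symmetric monoidal $(\infty,n)$-category (as opposed to a merely flagged one), and this univalence is the only property of $\fX$ that the argument needs beyond what Theorem~\ref{B-bord} already consumes.

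First I would promote univalent-completion to the symmetric monoidal setting. The inclusion $\Alg_{\sf Com}(\Cat_n)\hookrightarrow \Alg_{\sf Com}(\fCat_n)$ is limit-preserving and preserves filtered colimits, these properties being inherited from the same for $\Cat_n\hookrightarrow \fCat_n$. Both $\infty$-categories are presentable, so the adjoint functor theorem produces a left adjoint
\[
\widehat{(-)}\colon \Alg_{\sf Com}(\fCat_n)\longrightarrow \Alg_{\sf Com}(\Cat_n)~,
\]
the symmetric monoidal univalent-completion, and Definition~\ref{def.cat.bord} identifies $\widehat{\Bord_n^{\fr}}$ as its value on $\Bord_n^{\fr}$.

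Next I would assemble the commutative triangle of spaces
\[
\xymatrix{
\Map^\ot\bigl(\Bord_n^{\fr},\fX\bigr) \ar[rr]^-{\widehat{(-)}} \ar[dr]_-{\ev_\ast}
&&
\Map^\ot\bigl(\widehat{\Bord_n^{\fr}}, \widehat{\fX}\bigr) \ar[dl]^-{\ev_\ast} \\
& \obj(\fX) &
}
\]
whose top horizontal map is induced by functoriality of $\widehat{(-)}$ applied to source and target. Since $\fX$ is univalent by assumption, the counit $\widehat{\fX}\xra{\simeq}\fX$ is an equivalence, and the universal property of $\widehat{(-)}$ as a left adjoint to the inclusion of univalent objects then forces the top horizontal map to be an equivalence. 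Theorem~\ref{B-bord} supplies an equivalence for the left diagonal. Commutativity of the triangle forces the right diagonal to be an equivalence, which is the content of the corollary.

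The only step of genuine content is the construction of symmetric monoidal univalent-completion; granted that, the remainder is formal two-out-of-three in the triangle above. I expect this step to pose no real obstacle, since $\Alg_{\sf Com}$ transports presentability and limit-preserving inclusions compatibly, but it is the one input that must either be cited from earlier work or supplied in detail before the argument goes through.
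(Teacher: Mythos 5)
Your argument is correct and is exactly the intended one: the paper omits an explicit proof of this corollary, expecting the reader to transpose the proof of Corollary~\ref{cor.univ.tang} (triangle under the unit of univalent-completion, counit equivalence $\widehat{\fX}\simeq\fX$, Theorem~\ref{B-bord} for the diagonal, two-out-of-three), which is precisely what you do. Your remark that the symmetric monoidal univalent-completion $\Alg_{\sf Com}(\fCat_n)\to\Alg_{\sf Com}(\Cat_n)$ is the one input needing construction (via presentability and the limit/filtered-colimit-preserving inclusion) is also the same tacit ingredient behind the paper's Definition~\ref{def.cat.bord}.
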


\subsection{Invertible field theories}
We recover the expected classification of \emph{invertible} framed topological quantum field theories.
This is phrased in terms of the groupoid completion of the framed cobordism category.

The fully faithful inclusion $\Spaces^{\ast/} \hookrightarrow \Cat_{n+k}^{\ast/}$ as pointed $\infty$-groupoids has a left adjoint localization:
\begin{equation}\label{B.def}
\sB \colon \Cat_{n+k}^{\ast/} ~\rightleftarrows~ \Spaces^{\ast/}~;
\end{equation}
this left adjoint carries each pointed $(\infty,n+k)$-category to its \emph{$\infty$-groupoid-completion}.
Note the commutative diagram among $\infty$-categories:
\begin{equation}\label{endo.loops}
\xymatrix{
\Spaces^{\ast/} \ar[rr]  \ar[dr]_-{\Omega^k}
&
&
\Cat_{n+k}^{\ast/}  \ar[dl]^-{\kEnd(\uno)}
\\
&
\Spaces
&
.
}
\end{equation}

\begin{cor}\label{B.Tang}
There is a canonical identification between pointed spaces
\[
\sB\Bigl(\widehat{\Tang_{n\subset n+k}^{\fr}}\Bigr)~\simeq~ S^{k}
\]
involving the $\infty$-groupoid-completion of the pointed $(\infty,n+k)$-category of framed tangles and the pointed $k$-sphere.

\end{cor}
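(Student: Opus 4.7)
The plan is to identify $\sB(\widehat{\Tang_{n\subset n+k}^{\fr}})$ with $S^k$ by corepresentation: compute the pointed mapping space out of the left-hand side into an arbitrary pointed $\infty$-groupoid $X$, and then invoke the Yoneda lemma in $\Spaces^{\ast/}$. The key observation is that any pointed $\infty$-groupoid, regarded as a pointed $(\infty,n+k)$-category via the inclusion $\Spaces^{\ast/} \hookrightarrow \Cat_{n+k}^{\ast/}$, automatically has adjoints: every $j$-morphism is invertible, and so is in particular both a left and a right adjoint to its inverse, with the unit and counit being the identity coherences. Consequently the tangle hypothesis, Corollary~\ref{cor.univ.tang}, applies to $X$ as coefficient.

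Concretely, for each $X \in \Spaces^{\ast/}$ I would combine the adjunction $\sB \dashv \iota$ of~(\ref{B.def}), the tangle hypothesis, the identification $\kEnd_X(\uno) \simeq \Omega^k X$ recorded by diagram~(\ref{endo.loops}), and the standard loop-sphere adjunction to assemble a chain of natural equivalences
\[
\Map_{\Spaces^{\ast/}}\bigl(\sB\widehat{\Tang_{n\subset n+k}^{\fr}}, X\bigr)
\simeq
\Map^{\ast/}\bigl(\widehat{\Tang_{n\subset n+k}^{\fr}}, X\bigr)
\simeq
\kEnd_X(\uno)
\simeq
\Omega^k X
\simeq
\Map_{\Spaces^{\ast/}}(S^k, X).
\]
The Yoneda lemma in $\Spaces^{\ast/}$ then delivers the sought identification $\sB\widehat{\Tang_{n\subset n+k}^{\fr}} \simeq S^k$ of pointed spaces.

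The only technical point deserving care is verifying that each equivalence in the chain is natural in $X$ and basepoint-preserving, so that the Yoneda lemma in $\Spaces^{\ast/}$ (rather than in unpointed spaces) produces a basepoint-preserving equivalence. Each step is a standard instance of naturality: the $\sB \dashv \iota$ adjunction is natural in $X$; the tangle hypothesis equivalence is realized by evaluation at $\{0\}\subset \RR^k$ and is manifestly natural in the coefficient $(\infty,n+k)$-category; diagram~(\ref{endo.loops}) commutes as a diagram of $\infty$-categories, which makes the identification $\kEnd_X(\uno) \simeq \Omega^k X$ natural in $X$; and the suspension-loop adjunction is standard. I do not expect any genuine obstacle beyond bookkeeping these naturalities.
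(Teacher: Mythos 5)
Your proof is correct and matches the paper's argument step for step: corepresentation against an arbitrary pointed $\infty$-groupoid $X$ via the $\sB$-adjunction, then the tangle hypothesis (Corollary~\ref{cor.univ.tang}) applied to $X$ regarded as a pointed $(\infty,n+k)$-category with (trivially existing) adjoints, then diagram~(\ref{endo.loops}) to identify $\kEnd_X(\uno) \simeq \Omega^k X$, and finally Yoneda in $\Spaces^{\ast/}$. Your explicit remark that a pointed $\infty$-groupoid automatically has adjoints is a hypothesis check the paper leaves implicit, and it is exactly the right thing to verify.
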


\begin{proof}
Let $\ast \xra{\uno} X$ be a pointed space.
Regard $X$ as a pointed $(\infty,n+k)$-category in which each $i$-morphism is invertible.
We explain the following sequence of canonical identifications
\begin{eqnarray}
\nonumber
\Map^{\ast/}\Bigl(\sB \bigl(\widehat{\Tang_{n\subset n+k}^{\fr}}\bigr), X\Bigr)
&
\simeq
&
\Map^{\ast/}(\widehat{\Tang_{n\subset n+k}^{\fr}}, X)
\\
\nonumber
&
\simeq
&
\kEnd_X(\uno)
\\
\nonumber
&
\simeq
&
\Omega^{k}X.
\end{eqnarray}
The first identification is the localization~(\ref{B.def}).
The second identification is Theorem~\ref{B-bord}.
The third identification is the commutativity of the diagram~(\ref{endo.loops}).
In this way, we see that the functors $\Spaces^{\ast/}\to \Spaces$ corepresented by the $\infty$-groupoid-completion of $\widehat{\Tang_{n\subset n+k}^{\fr}}$ and the pointed sphere $S^{k}$ are canonically identified.
The desired canonical equivalence between pointed spaces then follows from the Yoneda lemma.

\end{proof}

Consider the fully faithful inclusion $\Spectra_{\geq 0}\hookrightarrow \Alg_{\sf Com}(\spaces) \hookrightarrow \Alg_{\sf Com}(\Cat_n)$ of connective spectra as symmetric monoidal $\infty$-groupoids with duals.
Note the commutative diagram among $\infty$-categories:
\begin{equation}\label{obj.loops}
\xymatrix{
\Spectra_{\geq 0}  \ar[r]  \ar[dr]_-{\Omega^\infty}
&
\Alg_{\sf Com}(\Spaces)  \ar[r]  \ar[d]^-{\rm forget}
&
\Alg_{\sf Com}(\Cat_n)  \ar[dl]^-{\obj}
\\
&
\Spaces
&
.
}
\end{equation}
This fully faithful functor is a right adjoint in a localization:
\begin{equation}\label{B.ot}
\sB^{\otimes}\colon \Alg_{\sf Com}(\Cat_{n}) ~\rightleftarrows~ \Spectra_{\geq 0}~;
\end{equation}
this left adjoint carries each symmetric monoidal $(\infty,n)$-category to its \emph{Picard $\infty$-groupoid-completion}.

\begin{cor}\label{B.Bord}
There is a canonical identification between spectra
\[
\sB^{\otimes} \widehat{\Bord_n^{\fr}}
~ \simeq ~
\SS
\]
involving the Picard $\infty$-groupoid-completion of the symmetric monoidal $(\infty,n)$-category of framed cobordisms and the sphere spectrum.

\end{cor}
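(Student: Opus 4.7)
The plan is to mimic the proof of Corollary~\ref{B.Tang}, corepresenting both sides on the $\infty$-category $\Spectra_{\geq 0}$ and invoking Theorem~\ref{B-bord} in place of Theorem~\ref{thm.tang}. Since the fully faithful inclusion $\Spectra_{\geq 0} \hookrightarrow \Alg_{\sf Com}(\Cat_n)$ lands in symmetric monoidal $(\infty,n)$-categories that are $\infty$-groupoids (so every $k$-morphism is already invertible, hence automatically both a left and a right adjoint) with duals, every connective spectrum is a permissible coefficient for the cobordism hypothesis.

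First I would fix an arbitrary connective spectrum $E \in \Spectra_{\geq 0}$ and produce the following chain of natural equivalences of spaces:
\begin{eqnarray}
\nonumber
\Map_{\Spectra_{\geq 0}}\bigl(\sB^{\otimes}\widehat{\Bord_n^{\fr}}, E\bigr)
&
\underset{(\ref{B.ot})}{\simeq}
&
\Map^{\otimes}\bigl(\widehat{\Bord_n^{\fr}}, E\bigr)
\\
\nonumber
&
\underset{\rm Cor~\ref{cor.univ.bord}}{\simeq}
&
\obj(E)
\\
\nonumber
&
\underset{(\ref{obj.loops})}{\simeq}
&
\Omega^{\infty} E
\\
\nonumber
&
\underset{\rm def}{\simeq}
&
\Map_{\Spectra_{\geq 0}}(\SS, E)~.
\end{eqnarray}
The first equivalence is the adjunction defining the Picard $\infty$-groupoid-completion. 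The second is Corollary~\ref{cor.univ.bord}, applicable because $E$, being an $\infty$-groupoid, has adjoints and duals. The third is commutativity of the diagram~(\ref{obj.loops}), identifying the space of objects of the symmetric monoidal $\infty$-groupoid underlying $E$ with $\Omega^{\infty} E$. The final equivalence is the universal property of the sphere spectrum as the unit of the smash product on $\Spectra_{\geq 0}$.

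The chain is natural in $E$, so the Yoneda lemma in $\Spectra_{\geq 0}$ upgrades the pointwise equivalence of corepresentable functors to a canonical equivalence $\sB^{\otimes}\widehat{\Bord_n^{\fr}} \simeq \SS$ of connective spectra. The only step that merits care is the verification that the naturality in $E$ is genuine: the adjunction~(\ref{B.ot}) and the identification~(\ref{obj.loops}) are manifestly functorial in $E$, but the middle identification comes from the cobordism hypothesis, which is a statement for a fixed target. Functoriality there is the statement that the evaluation-at-$\ast$ map $\ev_\ast$ is a natural transformation of functors $\Alg_{\sf Com}(\Cat_n)^{\sf adj, \sf dual}\to \Spaces$, which is immediate from its construction. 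Apart from this bookkeeping, no genuine obstacle arises; the content of the corollary is entirely concentrated in Theorem~\ref{B-bord}, and this argument merely packages it through the standard adjunctions.
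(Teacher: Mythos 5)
Your proposal is correct and follows essentially the same route as the paper: corepresent $\sB^{\otimes}\widehat{\Bord_n^{\fr}}$ on $\Spectra_{\geq 0}$, pass through the localization~(\ref{B.ot}), invoke the cobordism hypothesis, identify $\obj$ with $\Omega^{\infty}$ via~(\ref{obj.loops}), and conclude by Yoneda. The only difference is cosmetic — you cite Corollary~\ref{cor.univ.bord} for the univalent object where the paper cites Theorem~\ref{B-bord}, and you spell out explicitly why a connective spectrum qualifies (invertible morphisms are automatically adjoints) — both reasonable refinements of the same argument.
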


\begin{proof}
Let $X$ be a connective spectrum.
Regard $X$ as a symmetric monoidal $(\infty,n)$-category in which each $i$-morphism is invertible and each object has a dual.
We explain the following sequence of canonical identifications
\begin{eqnarray}
\nonumber
\Map_{\Spectra}(\sB^\ot \widehat{\Bord_n^{\fr}}, X)
&
\simeq
&
\Map^\ot(\widehat{\Bord_n^{\fr}}, X)
\\
\nonumber
&
\simeq
&
\obj(X)
\\
\nonumber
&
\simeq
&
\Omega^{\oo}X.
\end{eqnarray}
The first identification is the localization~(\ref{B.ot}).
The second identification is Theorem~\ref{B-bord}.
The third identification is the commutativity of the diagram~(\ref{obj.loops}).
In this way, we see that the functors $\Spectra_{\geq 0}\to \Spaces$ corepresented by the Picard $\infty$-groupoid completion $\sB^{\ot} \widehat{\Bord_n^{\fr}}$ and the sphere spectrum $\SS$ are canonically identified.
The desired canonical equivalence between connective spectra follows from the Yoneda lemma.

\end{proof}

Corollary~\ref{B.Bord} has the following immediate consequence.
\begin{cor}\label{invertible.theories}
Let $\fX$ be a symmetric monoidal $(\infty,n)$-category.
Let $\fX^\sim\subset \fX$ be its maximal symmetric monoidal $\infty$-groupoid with duals; regard it as a connective spectrum.
There is a monomorphism between pointed spaces
\[
\Omega^\infty \fX^\sim~\hookrightarrow~ \Map^{\ot}\bigl(\Bord_n^{\fr},\fX\bigr)~;
\]
the image consists of those functors that carry each $i$-morphism in $\Bord_n^{\fr}$ to an invertible $i$-morphism in $\fX$.

\end{cor}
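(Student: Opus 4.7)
The plan is to identify $\Omega^\infty \fX^\sim$ with the subspace of $\Map^\otimes(\Bord_n^{\fr}, \fX)$ consisting of those symmetric monoidal functors whose image lies in $\fX^\sim$, and then invoke Corollary~\ref{B.Bord} to classify such functors via the Picard $\infty$-groupoid-completion.

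First, I would observe that the fully faithful inclusion $\fX^\sim \hookrightarrow \fX$ of the maximal symmetric monoidal $\infty$-groupoid with duals induces by postcomposition a monomorphism of spaces
\[
\Map^\otimes\bigl(\Bord_n^{\fr},\fX^\sim\bigr) \hookrightarrow \Map^\otimes\bigl(\Bord_n^{\fr},\fX\bigr)~,
\]
whose image consists of precisely those symmetric monoidal functors $F\colon \Bord_n^{\fr}\to \fX$ factoring through $\fX^\sim$. Unpacking, $F$ factors through $\fX^\sim$ if and only if it carries every $i$-morphism of $\Bord_n^{\fr}$ to an invertible $i$-morphism of $\fX$: this uses that $\fX^\sim$ is closed under the symmetric monoidal product (since tensor products of invertibles are invertible) and that every object of $\Bord_n^{\fr}$ is dualizable.

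Next, since $\fX$ is univalent-complete, precomposition with the unit $\Bord_n^{\fr}\to \widehat{\Bord_n^{\fr}}$ is an equivalence, so I can replace the source with $\widehat{\Bord_n^{\fr}}$. Regarding $\fX^\sim$ as a connective spectrum via the fully faithful functor $\Spectra_{\geq 0}\hookrightarrow \Alg_{\sf Com}(\Cat_n)$, the localization~(\ref{B.ot}) gives
\[
\Map^\otimes\bigl(\widehat{\Bord_n^{\fr}},\fX^\sim\bigr) \simeq \Map_{\Spectra}\bigl(\sB^\otimes \widehat{\Bord_n^{\fr}}, \fX^\sim\bigr)~.
\]
By Corollary~\ref{B.Bord}, $\sB^\otimes\widehat{\Bord_n^{\fr}}\simeq \SS$, so the right-hand side is canonically equivalent to $\Map_{\Spectra}(\SS,\fX^\sim) \simeq \Omega^\infty \fX^\sim$. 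Composing these identifications produces the desired monomorphism with the stated image.

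The main point to verify carefully — which I do not expect to be a real obstacle, given the groundwork already laid — is the identification of the image of the monomorphism, namely that a symmetric monoidal functor $F\colon \Bord_n^{\fr}\to \fX$ whose image consists entirely of invertible $i$-morphisms actually factors through $\fX^\sim$ as a symmetric monoidal functor, rather than merely on underlying $(\infty,n)$-categories. This is a universal-property statement for $\fX^\sim$ as the maximal symmetric monoidal sub-$\infty$-groupoid with duals, and can be argued by observing that the full subcategory of $\fX$ spanned by objects in $\fX^\sim$ inherits a symmetric monoidal structure, and that the further condition of invertibility of higher morphisms cuts out exactly $\fX^\sim$.
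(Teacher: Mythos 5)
Your proof is correct, and it follows exactly the route the paper intends: the paper introduces this corollary with the phrase ``Corollary~\ref{B.Bord} has the following immediate consequence'' and omits the argument, and you supply precisely the expected derivation. Namely: the fully faithful inclusion $\fX^\sim\hookrightarrow\fX$ induces the monomorphism on $\Map^\ot(\Bord_n^{\fr},-)$; functors landing in $\fX^\sim$ are exactly those carrying all $i$-morphisms to invertibles (using that every object of $\Bord_n^{\fr}$ is dualizable, so the image is automatically contained in the dualizable objects); then replacing $\Bord_n^{\fr}$ by $\widehat{\Bord_n^{\fr}}$ using univalence of the target, applying the localization $\sB^\ot$, and finally invoking Corollary~\ref{B.Bord} to identify $\sB^\ot\widehat{\Bord_n^{\fr}}\simeq\SS$ yields $\Omega^\infty\fX^\sim$. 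One tiny slip: in the univalence step you write ``since $\fX$ is univalent-complete,'' but the relevant mapping space is $\Map^\ot(\Bord_n^{\fr},\fX^\sim)$, so what matters is that $\fX^\sim$ (a connective spectrum, hence a symmetric monoidal $\infty$-groupoid, hence univalent) is univalent-complete; both hold, so the argument stands. Your closing paragraph flags the right subtlety to check and resolves it appropriately.
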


\section{Enrichments}
In favorable situations, self-enrichment is adjoint to a monoidal structure.
In our setting, we have two distinct monoidal structures.
One is the Cartesian product of higher categories;
another comes from geometric product of stratified spaces. 
The higher categories $\fC(\DD^i \times \DD^j)$ and $c_i\times c_j$ are not equivalent, and thus we have two distinct self-enriched enhancements of the cobordism hypothesis.\footnote{The higher category $\fC(\DD^i\times\DD^j)$ appears to be a form of the Gray tensor product of $c_i$ and $c_j$.}

\subsection{The Cartesian enrichment of the tangle hypothesis}\label{cat.n.enrich}
The presentable $\infty$-category $\Cat_{n+k}$ of $(\infty,n+k)$-categories has the property that the product functor
\[
\times \colon \Cat_{n+k}\times \Cat_{n+k} \longrightarrow \Cat_{n+k}~,\qquad (\cC,\cD)\mapsto \cC\times \cD~,
\]
preserves colimits in each variable (see~\cite{rezk-n}).
It follows that $\Cat_{n+k}$ is naturally enriched over $\Cat_{n+k}$, as in~\cite{gepnerhaugseng}.
Namely, for $\cC,\cD\in \Cat_{n+k}$, the $(\infty,n+k)$-category $\Fun(\cC,\cD)$ of morphisms from $\cC$ to $\cD$ represents the presheaf
\[
\Map\bigl(-,\Fun(\cC,\cD)\bigr)\colon \Cat_{n+k}^{\op} \ni \cK \mapsto \Map(\cK\times \cC,\cD)\in \Spaces~.
\]

Likewise, smash product between pointed $(\infty,n+k)$-categoreis defines a functor
\[
\smsh \colon \Cat_{n+k}^{\ast/}\times \Cat_{n+k}^{\ast/} \longrightarrow \Cat_{n+k}^{\ast/}~,\qquad (\cC,\cD)\mapsto \cC\smsh \cD:= \ast \underset{\cC \underset{\ast} \amalg \cD}\amalg \cC\times \cD~.
\]
Using that products distribute over colimits in each variable, so does this smash product functor.  
Using presentability of the $\infty$-category $\Cat_{n+k}^{\ast/}$ is naturally enriched over $\Cat_{n+k}^{\ast/}$.  
Namely, for $\cC,\cD\in \Cat_{n+k}^{\ast/}$, the pointed $(\infty,n+k)$-category $\Fun_\ast(\cC,\cD)$ of morphisms from $\cC$ to $\cD$ represents the presheaf
\begin{equation}\label{19}
\Map^{\ast/}\bigl(-,\Fun_\ast(\cC,\cD)\bigr)\colon (\Cat_{n+k}^{\ast/})^{\op} \ni \cK \mapsto \Map^{\ast/}(\cK\smsh \cC,\cD)\in \Spaces~.
\end{equation}

In particular, there is a commutative diagram of $\oo$-categories
\begin{equation}\label{20}
\xymatrix{
&\Cat_{n+k}^{\ast/} \ar[dr]^-{\sf obj}\\
(\Cat_{n+k}^{\ast/})^{\op}\times \Cat_{n+k}^{\ast/} 
\ar[ur]^-{\Fun_\ast}\ar[rr]_-{\Map^{\ast/}}&& \spaces.
}
\end{equation}

We prove the next result at the end of this subsection.

\begin{theorem}\label{thm.cart.tang}
Assuming Conjecture~\ref{conj.one}, for each pointed $(\infty,n+k)$-category $\uno\in\cC$ with adjoints, evaluation at the object $(\{0\}\subset\RR^k) \in \Tang_{n\subset n+k}^{\fr}$ defines an equivalence
\[
\ev_{\RR^k}\colon \Fun_\ast\bigl(\Tang_{n\subset n+k}^{\fr}, \cC\bigr) \ \simeq \  \kEnd_{\cC}(\uno)
\]
between the pointed $(\oo,n+k)$-category of $\cC$-valued pointed functors and the pointed space of $k$-endomorphisms of the distinguished object $\uno\in \cC$. 
In particular, for each $i>0$, each $i$-morphism in $\Fun_\ast\bigl(\Tang_{n\subset n+k}^{\fr}, \cC\bigr)$ is invertible.
\end{theorem}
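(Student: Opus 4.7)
The approach is to upgrade the space-level tangle hypothesis (Theorem \ref{thm.tang}) to the enriched statement using the self-enrichment (\ref{19}) of $\Cat_{n+k}^{\ast/}$. By Yoneda on pointed $(\infty,n+k)$-categories, it suffices to construct, natural in each $\cK \in \Cat_{n+k}^{\ast/}$, an equivalence
\[
\Map^{\ast/}\bigl(\cK, \Fun_\ast(\Tang_{n\subset n+k}^{\fr}, \cC)\bigr) \simeq \Map^{\ast/}\bigl(\cK, \kEnd_\cC(\uno)\bigr),
\]
with the target on the right viewed as a pointed $(\infty,n+k)$-category under the fully faithful inclusion $\Spaces^{\ast/} \hookrightarrow \Cat_{n+k}^{\ast/}$; the ``in particular'' clause of invertibility is then automatic from the target being an $\infty$-groupoid.

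The first step is to verify that for any pointed $\cK$, the cotensor $\Fun_\ast(\cK, \cC)$ remains a pointed $(\infty,n+k)$-category with adjoints. The pointed structure is the constant functor at $\uno$, and adjoints are constructed pointwise from those of $\cC$, with units, counits, and triangle identities of Definition \ref{def.adjoints} assembled coherently from the pointwise data. Theorem \ref{thm.tang} applied to $\Fun_\ast(\cK, \cC)$ and unwound via the tensor-cotensor adjunction (\ref{19}) then yields
\[
\Map^{\ast/}\bigl(\cK, \Fun_\ast(\Tang_{n\subset n+k}^{\fr}, \cC)\bigr) \simeq \Map^{\ast/}\bigl(\Tang_{n\subset n+k}^{\fr}, \Fun_\ast(\cK, \cC)\bigr) \simeq \kEnd_{\Fun_\ast(\cK, \cC)}(\uno).
\]
Writing $\Sigma^k := c_k \coprod_{\partial c_k} \ast \in \Cat_{n+k}^{\ast/}$ for the pointed $(\infty,n+k)$-category corepresenting $\kEnd_{(-)}(\uno)$, the self-enrichment further identifies the right-hand side with $\Map^{\ast/}(\cK, \Fun_\ast(\Sigma^k, \cC))$. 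The theorem thereby reduces to the identification $\Fun_\ast(\Sigma^k, \cC) \simeq \kEnd_\cC(\uno)$, that is, to verifying that $\Fun_\ast(\Sigma^k, \cC)$ is an $\infty$-groupoid.

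This final step is the crux and the main obstacle. It does not follow formally from adjoints in $\cC$ alone, since $\Sigma^k$ does not itself have adjoints; however, the chain of equivalences above already identifies $\Fun_\ast(\Sigma^k, \cC) \simeq \Fun_\ast(\Tang_{n\subset n+k}^{\fr}, \cC)$, and $\Tang_{n\subset n+k}^{\fr}$ does have adjoints, inherited (per \S\ref{sec.adj}) from the reflection symmetries of framed codimension-$k$ tangles underlying Remark \ref{rem.clscrtEuc}. The plan is to argue invertibility of $j$-morphisms directly in $\Fun_\ast(\Tang_{n\subset n+k}^{\fr}, \cC)$: by Lemma \ref{lemma.corep}, any pointed functor is determined by its value at $\RR^k$, and the naturality constraints imposed by the left and right adjoints of morphisms in $\Tang_{n\subset n+k}^{\fr}$, combined with the adjoint structure of $\cC$, supply an inverse for each natural transformation at each $j$-level. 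The subsidiary difficulty is the rigorous $(\infty,n+k)$-categorical verification that the cotensor $\Fun_\ast(\cK, \cC)$ inherits adjoints, which, while morally routine, requires careful bookkeeping of coherence data.
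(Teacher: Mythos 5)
The proposal correctly identifies that the content of the theorem ultimately reduces to showing that $\Fun_\ast\bigl(c_{k/\partial c_k}, \cC\bigr)$ (or equivalently $\Fun_\ast(\Tang_{n\subset n+k}^{\fr},\cC)$) is an $\infty$-groupoid, which is also where the paper's proof converges. But there are two genuine problems with the proposed route.

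First, the reduction itself relies on the claim that the cotensor $\Fun_\ast(\cK,\cC)$ inherits adjoints from $\cC$ for arbitrary pointed $\cK$, so that Theorem~\ref{thm.tang} can be applied to it. This claim is false. The internal hom $\Fun_\ast(\cK,\cC)$ for the Cartesian (smash) monoidal structure of~(\ref{19}) has as its $1$-morphisms strong (pseudo-)natural transformations, i.e.\ those whose naturality constraints are invertible. Even when $\cC$ has adjoints, the pointwise right adjoint of such a transformation assembles only into a \emph{lax} transformation via the mate calculus: given a naturality square with invertible filler $\sigma\colon v\circ f \xRightarrow{\sim} g\circ u$, its mate $f\circ v^R \Rightarrow u^R\circ g$ with respect to the adjunctions $u\dashv u^R$, $v\dashv v^R$ is not invertible in general. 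So no candidate right adjoint exists within $\Fun_\ast(\cK,\cC)$. The pathology already appears for $\cK = (c_1)_\ast$, which occurs in the Yoneda reduction. The paper never needs this false statement: it only ever applies the tangle hypothesis to $\cC$ itself.

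Second, the proposal's plan for the crux---to produce inverses using ``reflection symmetries of framed codimension-$k$ tangles'' and the adjoints of $\cC$---is not an argument, and the correct statement is in fact purely formal and needs no adjoints at all. The paper's Lemma~\ref{lem.smshcell} shows, for any pointed $(\infty,n+k)$-category $\cC$, that the smash $c_{k/\partial c_k}\smsh (c_i)_\ast \to c_{k/\partial c_k}$ is an equivalence whenever $i\geq 0$, using only that the $\infty$-groupoid completion of $c_i$ is contractible and that products preserve localizations. Unwinding via~(\ref{19}), this says $\Fun_\ast\bigl(c_{k/\partial c_k},\cC\bigr)(c_0)\to\Fun_\ast\bigl(c_{k/\partial c_k},\cC\bigr)(c_i)$ is an equivalence, i.e.\ $\Fun_\ast\bigl(c_{k/\partial c_k},\cC\bigr)$ is a space, identified with $\kEnd_\cC(\uno)$. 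The adjoints of $\cC$ and the space-level tangle hypothesis (Corollary~\ref{cor.univ.tang}) only enter afterward, to identify the adjoint completion $(c_{k/\partial c_k})^{\adj}$ with $\widehat{\Tang_{n\subset n+k}^{\fr}}$. Your proposal is missing this smash-with-cells lemma entirely, which is the technical heart of the result.
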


Taking coproduct with the terminal $(\infty,n+k)$-category defines a functor between $\infty$-categories,
\[
(-)_\ast \colon \Cat_{n+k} \longrightarrow \Cat_{n+k}^{\ast/}~,\qquad \cT\mapsto \cT_\ast := (\ast \to  \cT\amalg \ast)~,
\]
from that of $(\infty,n+k)$-categories to that of pointed $(\infty,n+k)$-categories.
Observe, for each pointed $(\infty,n+k)$-category $\cC$, and each $(\infty,n+k)$-category $\cT$, the canonical functor 
\[
\cC \smsh \cT_\ast \longrightarrow \cC
\]
between pointed $(\infty,n+k)$-categories.

\begin{lemma}\label{lem.smshcell}
Let $k>0$.
Let $\cT$ be an $(\infty,n+k)$-category.
Provided the $\infty$-groupoid completion $\sB^n \cT \simeq \ast$ is contractible, the canonical functor between pointed $(\infty,n+k)$-categories
\[
c_{k/\partial c_k}\smsh \cT_\ast \xra{~\simeq~}c_{k/\partial c_k}
\]
is an equivalence.
\end{lemma}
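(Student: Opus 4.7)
The plan is to use Yoneda together with the smash-hom adjunction to reduce the claim to an identification of mapping spaces. By Yoneda, it suffices to show that for every pointed $(\infty, n+k)$-category $(\cE, \uno)$, the induced map of spaces
\[
\kEnd_\cE(\uno) \;=\; \Map^{\ast/}(c_{k/\partial c_k}, \cE) \;\longrightarrow\; \Map^{\ast/}(c_{k/\partial c_k} \smsh \cT_\ast, \cE)
\]
is an equivalence. The smash-hom adjunction~(\ref{19}) together with the identification $\Fun_\ast(\cT_\ast, \cE) \simeq \Fun(\cT, \cE)$ (pointed at the constant functor $\uno_{\mathrm{const}}$) rewrites the right-hand side as $\kEnd_{\Fun(\cT, \cE)}(\uno_{\mathrm{const}})$, the space of $k$-endomorphisms of the constant functor in the Cartesian-enriched hom.

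The substantive step is then to establish a natural equivalence
\[
\kEnd_{\Fun(\cT, \cE)}(\uno_{\mathrm{const}}) \;\simeq\; \Map_{\Cat_{n+k}}\bigl(\cT, \kEnd_\cE(\uno)\bigr),
\]
where the space $\kEnd_\cE(\uno)$ is regarded on the right as a discrete $(\infty, n+k)$-category. Granting this, the adjunction between the $\infty$-groupoid-completion $\sB^n$ and the inclusion $\Spaces \hookrightarrow \Cat_{n+k}$ yields $\Map_{\Cat_{n+k}}(\cT, \kEnd_\cE(\uno)) \simeq \Map_{\Spaces}(\sB^n \cT, \kEnd_\cE(\uno))$, which collapses to $\kEnd_\cE(\uno)$ under the hypothesis $\sB^n \cT \simeq \ast$. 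A check that the resulting composite is the canonical constant inclusion then completes the Yoneda reduction.

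The main obstacle is the identification in the substantive step. Unwinding the Cartesian enrichment via the adjunction $\Map(\cT \times -, \cE) \simeq \Map(\cT, \Fun(-, \cE))$ and forming the appropriate fiber, one recasts the left-hand side as $\Map_{\Cat_{n+k}}(\cT, \cG)$, where $\cG := \Fun(c_k, \cE) \times_{\Fun(\partial c_k, \cE)} \{\mathrm{const}_\uno\}$; thus what must be shown is that $\cG$ is equivalent to the constant $(\infty, n+k)$-category on the space $\kEnd_\cE(\uno)$. At $c_0$ this is tautological, while for $j \geq 1$ the equality $\cG(c_j) = \cG(c_0)$ amounts to the geometric assertion $(c_k \times c_j)/(\partial c_k \times c_j) \simeq c_k/\partial c_k$ in $\Cat_{n+k}^{\ast/}$. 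I would prove this directly -- avoiding circularity with the lemma -- by exhibiting the retraction that the initial vertex $0 \in c_j$ provides for the projection $c_k \times c_j \to c_k$, and by verifying that the natural transformation $\mathrm{const}_0 \Rightarrow \mathrm{id}_{c_j}$ induced by the initiality of $0 \in c_j$ becomes invertible in the pointed quotient, precisely because the hypothesis $k > 0$ ensures the collapse of $\partial c_k$ trivializes the only direction along which invertibility could fail.
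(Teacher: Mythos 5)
Your overall strategy---Yoneda reduction, the smash--hom adjunction to rewrite the right-hand side as $k$-endomorphisms in $\Fun(\cT,\cE)$, and the observation that the whole question reduces to showing a certain $(\infty,n+k)$-category $\cG$ is constant---is a legitimate alternative route. The reduction correctly passes the burden to the single family of special cases $\cT = c_j$, and the final step via the adjunction to spaces under the hypothesis $\sB\cT\simeq\ast$ is fine. This is genuinely different from what the paper does: the paper never reduces to cells, but instead uses the distribution of $\smsh$ over colimits to express $c_{k/\partial c_k}\smsh\cT_\ast$ as a pushout, factors the vertical arrows through the localizations $(c_k\times\cT)_\ast\to c_{k,\ast}$ and $(\partial c_k\times\cT)_\ast\to\partial c_{k,\ast}$ (localizations because $\Cat_{n+k}$ is Cartesian closed and $\cT\to\ast$ is one), and checks that the morphisms inverted on each side agree, so the relevant square is a pushout. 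That argument is uniform in $\cT$ and needs no cell-by-cell analysis.

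However, your proposed proof of the base case $(c_k\times c_j)/(\partial c_k\times c_j)\simeq c_k/\partial c_k$ has a genuine gap: the vertex $0\in c_j$ is \emph{not} initial once $j\geq 2$. In $c_j$ we have $\Map_{c_j}(0,1)\simeq c_{j-1}$, which is contractible precisely when $j-1=0$, i.e.\ $j=1$. For $j\geq 2$ there is therefore no natural transformation $\mathrm{const}_0\Rightarrow\mathrm{id}_{c_j}$, so the retraction argument you sketch---a section of $c_k\times c_j\to c_k$ at $0$ together with a natural transformation that becomes invertible after quotienting---does not exist in the range you need. (And $j\geq 2$ cannot be dodged: to conclude that $\cG$ is a constant $(\infty,n+k)$-category you need the degeneracy $\cG(c_0)\to\cG(c_j)$ to be an equivalence for every $j$.) To repair the argument in the spirit you propose you would need a different device for contracting the $c_j$-direction at the level of quotients; the paper supplies exactly this via the observation that products with $c_j$ (more generally, with any $\cT$ having contractible classifying space) are localizations, and that because $k>0$ the objects of $c_k$ all lie in $\partial c_k$, so the two localizations invert the same class of morphisms. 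You might salvage your reduction by importing that localization step for the base case, but then it would be cleaner to run the paper's argument directly and skip the Yoneda reduction entirely.
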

\begin{proof}
Being defined as a colimit, smash products distribute over colimits.
Therefore, the smash product $c_{k/\partial c_k}\smsh \cT_\ast$ is identified as the pushout among pointed $(\oo,n+k)$-categories
\[
\xymatrix{
\partial c_{k,\ast}\smsh \cT_\ast \ar[d]\ar[r]&c_{k, \ast}\smsh \cT_\ast \ar[d]\\
\ast\smsh \cT_\ast\ar[r]&c_{k/\partial c_k}\smsh \cT_\ast    .
}   
\]
This diagram is identified as the pushout diagram
\[
\xymatrix{
(\partial c_k\times \cT)_\ast \ar[d]\ar[r]&(c_k\times \cT)_\ast \ar[d]\\
\ast\ar[r]&c_{k/\partial c_k}\smsh \cT_\ast~.}
\]
The vertical functors can be factored as two adjacent commutative squares
\[
\xymatrix{
(\partial c_k\times \cT)_\ast \ar[d]\ar[r]&(c_k\times \cT)_\ast \ar[d]\\
\partial c_{k,\ast} \ar[d]\ar[r]& c_{k,\ast} \ar[d]\\
\ast\ar[r]&c_{k/\partial c_k}\smsh \cT_\ast~.}
\]
Because the $\infty$-category of $(\infty,n+k)$-categories is Cartesian closed (\cite{rezk-n}), 
taking products preserves localizations.
Given the condition that the classifying space of $\cT$ is contractible (i.e., that the functor $\cT \ra \ast$ is a localization), the vertical functors $(\partial c_k\times \cT)_\ast \ra \partial c_{k,\ast}$ and $(c_k\times \cT)_\ast \ra c_{k,\ast}$ are localizations between pointed $(\oo,n+k)$-categories. 
A non-identity $i$-morphism in $(c_k\times\cT)_\ast$ is sent to an equivalence in $c_{k,\ast}$ if and only if it is in the image in $(\partial c_k \times\cT)_\ast$. 
It follows that the top commutative square is a pushout.
Consequently, the bottom commutative square is a pushout.
This is exactly the statement of the canonical functor $c_{k/\partial c_k}\smsh \cT_\ast \ra c_{k/\partial c_k}$ being an equivalence.

\end{proof}

\begin{proof}[Proof of Theorem~\ref{thm.cart.tang}]
Let $i\geq 0$.
The unique functor $c_i\to c_0$ determines a map between spaces: $\Fun_\ast\bigl( c_{k/\partial c_k}  ,  \cC \bigr)(c_0) \to \Fun_\ast\bigl( c_{k/\partial c_k}  ,  \cC \bigr)(c_i)$.
Unwinding the definition of the pointed $(\infty,n+k)$-category $\Fun_\ast\bigl( c_{k/\partial c_k}  ,  \cC \bigr)$, this map between spaces is identified as the map
{\Small
\begin{equation}\label{23}
\Fun_\ast\bigl( c_{k/\partial c_k}  ,  \cC \bigr) (c_0)
~\underset{(\ref{19})}\simeq~
\Map^{\ast/}\bigl( c_{k/\partial c_k} \smsh (c_0)_\ast ,  \cC \bigr) 
\longrightarrow
\Map^{\ast/}\bigl( c_{k/\partial c_k}\smsh (c_i)_\ast ,  \cC \bigr) 
~\underset{(\ref{19})}\simeq~
\Fun_\ast\bigl( (c_{k/\partial c_k})^{\adj}  ,  \cC \bigr) (c_i)~.
\end{equation}
}
Lemma~\ref{lem.smshcell} applied to $\cT=c_i$ gives that the above arrow is in fact an equivalence.
We conclude that each $i$-morphism in the $(\infty,n+k)$-category $\Fun_\ast\bigl( c_{k/\partial c_k}  ,  \cC \bigr)$ is invertible; this is to say that this pointed $(\infty,n+k)$-category is in fact a pointed space.  
Through the commutative diagram~(\ref{20}), and the Terminology~\ref{source.target} of $k$-endomorphisms and of objects in a higher category, there is a canonical identification between spaces
\begin{equation}\label{24}
\kEnd_\cC(\uno)
~\underset{\rm Term~\ref{source.target}}\simeq~
\Map^{\ast/}\bigl( c_{k/\partial c_k} ,  \cC \bigr) 
~\underset{(\ref{20})}\simeq~
\obj  \Fun_\ast\bigl( c_{k/\partial c_k}  ,  \cC \bigr) 
~\underset{\rm Term~\ref{source.target}}\simeq~
\Fun_\ast\bigl( c_{k/\partial c_k}  ,  \cC \bigr) (c_0)~.
\end{equation}
Concatenating~(\ref{23}) and~(\ref{24}) gives a canonical equivalence between pointed $(\infty,n+k)$-categories:
$
\kEnd_\cC(\uno)   \simeq  \Fun_\ast ( c_{k/\partial c_k}  ,  \cC )    .
$
Using that $\cC$ has adjoints, this identification is the further identification between pointed $(\infty,n+k)$-categories:
\begin{equation}\label{22}
\kEnd_\cC(\uno)~\simeq~\Fun_\ast\bigl( (c_{k/\partial c_k})^{\adj}  ,  \cC \bigr)~.
\end{equation}
The tangle hypothesis (Corollary~\ref{cor.univ.tang}) gives that the pointed functor $c_{k/\partial c_k} \to \widehat{\Tang_{n\subset n+k}^{\fr}}$ classifying the $k$-endomorphism $(\{0\}\subset \RR^k)$ determines an identification
\begin{equation}\label{21}
(c_{k/\partial c_k} )^{\adj}\xra{~\simeq~} \widehat{\Tang_{n\subset n+k}^{\fr}}
\end{equation}
between pointed $(\infty,n+k)$-categories with adjoints.
Inserting the identification~(\ref{21}) into~(\ref{22}) gives the sought identification.
\end{proof}

\subsection{The Cartesian enrichment of the cobordism hypothesis}
There is a natural enrichment of symmetric monoidal $(\oo,n)$-categories in $(\oo,n)$-categories:
\[
\xymatrix{
&\Cat_n \ar[dr]^-{\sf obj}\\
\Alg_{\sf Com}(\Cat_n)^{\op}\times \Alg_{\sf Com}(\Cat_n) 
\ar[ur]^-{\Fun^{\ot}}\ar[rr]_-{\Map^{\ot}}&& \spaces .
}
\]
This enrichment is defined by setting $\Fun^{\ot}(\fZ,\fX)$ to be the inverse limit of underlying $(\oo,n)$-categories:
\[
\Fun^{\ot}(\fZ,\fX)~:=~\underset{k\mapsto \oo}\limit \Fun_\ast\bigl(\fB^k \fZ, \fB^k\fX\bigr)~.
\]
Here, $\Fun_\ast\bigl(\fB^k \fZ, \fB^k\fX\bigr)$ is the underlying $(\oo,n)$-category of the natural $(\oo,n+k)$-category of pointed functors from $\fB^k \fZ$ to $\fB^k\fX$.
With respect to this enrichment, we have the following form of the cobordism hypothesis.

\begin{theorem}
Let $\fX$ be a symmetric monoidal $(\infty,n)$-category with adjoints and duals.
Assuming Conjecture \ref{conj.one}, evaluation on $\ast\in \Bord_n^{\fr}$ determines an equivalence
\[
\ev_\ast\colon \Fun^{\ot}\bigl(\Bord_n^{\fr},\fX\bigr)\xra{~\simeq~}\obj(\fX)
\]
between the $(\oo,n)$-category of $\fX$-valued symmetric monoidal functors and the space of objects of $\fX$. In particular, the $(\infty,n)$-category $\Fun^{\ot}(\Bord_n^{\fr},\fX)$ is an $\oo$-groupoid.
\end{theorem}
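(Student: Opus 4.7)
The plan is to mimic the proof of Theorem~\ref{B-bord}, but with the Cartesian-enriched tangle hypothesis (Theorem~\ref{thm.cart.tang}) playing the role of Corollary~\ref{cor.ek.tang}. Unwinding the definition of the enrichment gives
\[
\Fun^\ot(\Bord_n^\fr,\fX)~\simeq~\underset{k\to\oo}\limit\ \Fun_\ast\bigl(\fB^k\Bord_n^\fr,\ \fB^k\fX\bigr),
\]
so the task is to identify each factor as $\obj(\fX)$ and check that the limit is constant.

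First I would identify the $k$-fold delooping $\fB^k\Bord_n^\fr$ with (the univalent-completion of) the pointed $(\infty,n{+}k)$-category $\widehat{\Tang_{n\subset n+k}^\fr}$. This rests on Definition~\ref{def.Bord-B}, which presents $\Bord_n^\fr$ as the Cartesian section of Lemma~\ref{spectra} associated to the compatible sequence $k\mapsto\Omega^k\Tang_{n\subset n+k}^\fr$, together with Proposition~\ref{tang.is.reduced}, which gives $\fB^k\Omega^k\widehat{\Tang_{n\subset n+k}^\fr}\simeq\widehat{\Tang_{n\subset n+k}^\fr}$. Substituting yields
\[
\Fun_\ast\bigl(\fB^k\Bord_n^\fr,\ \fB^k\fX\bigr)~\simeq~\Fun_\ast\bigl(\widehat{\Tang_{n\subset n+k}^\fr},\ \fB^k\fX\bigr).
\]
Now apply Theorem~\ref{thm.cart.tang} with $\cC=\fB^k\fX$: since $\fX$ has adjoints and duals, $\fB^k\fX$ is a pointed $(\infty,n{+}k)$-category with adjoints, and the theorem produces an equivalence
\[
\Fun_\ast\bigl(\widehat{\Tang_{n\subset n+k}^\fr},\ \fB^k\fX\bigr)~\simeq~\kEnd_{\fB^k\fX}(\uno),
\]
where moreover the left-hand side is an $\oo$-groupoid. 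Because $\fB^k\fX$ is reduced (its underlying $(\oo,k{-}1)$-category is a point), the space of $k$-endomorphisms of the distinguished object is canonically $\obj(\fX)$.

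It remains to check that the structure maps in the sequential limit over $k$ are equivalences under these identifications. This is where the argument bites down: the transition map arises from the stabilization functor $\RR\times(-)\colon \Mfd_{n+k}^\sfr\to\Mfd_{n+k+1}^\sfr$ of~(\ref{6}), and one checks that after the identifications above it becomes the identity on $\obj(\fX)$, exactly as in the proof of Theorem~\ref{B-bord}. The limit of a constant tower at $\obj(\fX)$ is $\obj(\fX)$, giving the desired equivalence; the final sentence of the theorem (that the target is an $\oo$-groupoid) is automatic, since it is a limit of $\oo$-groupoids by Theorem~\ref{thm.cart.tang}.

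The main obstacle is the identification $\fB^k\Bord_n^\fr\simeq\widehat{\Tang_{n\subset n+k}^\fr}$. On the level of $\cE_k$-algebras this is essentially Proposition~\ref{tang.is.reduced} applied to the $k$-th entry of the Cartesian section presenting $\Bord_n^\fr$; however, packaging it compatibly with the symmetric monoidal structure constructed via Lemma~\ref{spectra} requires showing that $\fB^k$, applied to the filtered colimit defining the $\cE_k$-level of $\Bord_n^\fr$, commutes with the stabilization and does not introduce extra cells. Once that commutativity is established, the rest of the argument is a direct enhancement of the proof of Theorem~\ref{B-bord} from spaces of morphisms to enriched hom-$(\oo,n)$-categories.
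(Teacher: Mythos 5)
Your overall architecture agrees with the paper's: the paper's proof of this theorem is precisely a reduction of each term in the limit defining $\Fun^{\ot}$ to the Cartesian-enriched tangle hypothesis (Theorem~\ref{thm.cart.tang}). The genuine problem is your pivotal identification $\fB^k\Bord_n^{\fr}\simeq\widehat{\Tang_{n\subset n+k}^{\fr}}$, which is false, and which Proposition~\ref{tang.is.reduced} does not provide. By Definition~\ref{def.Bord-B} and Lemma~\ref{spectra}, the underlying $\cE_k$-monoidal $(\infty,n)$-category of $\Bord_n^{\fr}$ is the stabilized colimit $\underset{\ell\geq k}\colim\,(\Omega^\ell\Tang_{n\subset n+\ell}^{\fr})_{|\cE_k}$, not the single term $\Omega^k\Tang_{n\subset n+k}^{\fr}$. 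Since $\fB^k$ is a fully faithful left adjoint it preserves this filtered colimit but cannot collapse it onto its first term: the stabilization maps $(\Omega^\ell\Tang_{n\subset n+\ell}^{\fr})_{|\cE_k}\to(\Omega^{\ell+1}\Tang_{n\subset n+\ell+1}^{\fr})_{|\cE_k}$ are not equivalences (for instance, the top-dimensional morphism spaces are moduli of codimension-$\ell$ tangles in $\RR^\ell\times\DD^n$, which for small $\ell$ record knotting that disappears after stabilization; univalent-completion does not alter mapping spaces with fixed source and target, so it cannot repair this). So the obstacle you flag---commuting $\fB^k$ past the colimit---is not the issue; even granting it, $\fB^k\Bord_n^{\fr}\simeq\underset{m\geq 0}\colim\,\fB^k\bigl((\Omega^{k+m}\Tang_{n\subset n+k+m}^{\fr})_{|\cE_k}\bigr)$, whose first term is $\widehat{\Tang_{n\subset n+k}^{\fr}}$ but which genuinely grows with $m$. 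That $\Fun_\ast(\fB^k\Bord_n^{\fr},\fB^k\fX)$ and $\Fun_\ast(\widehat{\Tang_{n\subset n+k}^{\fr}},\fB^k\fX)$ nevertheless agree is a consequence of the theorem being proved, not an available input.

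The repair is to keep the colimit rather than discard it: $\Fun_\ast(\fB^k\Bord_n^{\fr},\fB^k\fX)\simeq\underset{\ell\geq k}\limit\,\Fun_\ast\bigl(\fB^k((\Omega^\ell\Tang_{n\subset n+\ell}^{\fr})_{|\cE_k}),\fB^k\fX\bigr)$, and identify each term with the space $\obj(\fX)$, with equivalences as transition maps, by Theorem~\ref{thm.cart.tang} applied in codimension $\ell$ (together with the compatibility of the enrichment with the $\fB/\Omega$ adjunction that is already built into the definition of $\Fun^{\ot}$). In other words, run the proof of Theorem~\ref{B-bord} verbatim with $\Map$ replaced by $\Fun$ and Corollary~\ref{cor.ek.tang} replaced by Theorem~\ref{thm.cart.tang}; the resulting constant tower has limit $\obj(\fX)$, and the $\infty$-groupoid conclusion follows, as you note, because each term is a space.
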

\begin{proof}
This follows from Theorem \ref{thm.cart.tang}.
\end{proof}

\subsection{The geometric enrichment of the tangle hypothesis}
We construct a second enrichment: this will constitute the dashed functor in a commutative diagram
\[
\xymatrix{
&&\Cat_{n}^{\ast/}\ar[dr]^-{\obj}\\
\Cat_{n+k}^{\adj, \ast/}\ar@{-->}[urr]^-{\Tang_{n\subset n+k}^{\fr}(-)}\ar[rrr]_-{\Map^{/ast/}(\Tang_{n\subset n+k}^{\fr},-)}&&&\spaces~.}
\]
That is, the functor will send $\cC$, a pointed $(\oo,n+k)$-category with adjoints, to a pointed $(\oo,n)$-category denoted
\[
\Tang_{n\subset n+k}^{\fr}(\cC)
\]
which has an identification of the underlying space of objects
\[
\obj\Bigl(\Tang_{n\subset n+k}^{\fr}(\cC)\Bigr) 
~\simeq~ 
\Map^{/ast/}\bigl(\Tang_{n\subset n+k}^{\fr}, \cC\bigr)~.
\]
We construct this from geometric products, as follows. 
Taking products implments a functor involving $\infty$-categories of solidly $(n+k)$-framed stratified manifolds
\[
\Mfd_k^{\sfr} \times \Mfd_n^{\sfr}
\xra{~\times~} 
\Mfd_{n+k}^{\sfr}~.
\]
On underlying stratified spaces, this functor is the usual product. 
On tangential structures, this uses the functor
\[
\Vect^{\sf inj}_{/\RR^k}\times \Vect^{\sf inj}_{/\RR^n}
\xra{~\oplus~}
\Vect^{\sf inj}_{/\RR^{n+k}}~.
\]
This associates the product solid $(n+k)$-framing $\sT_{X\times Y}\hookrightarrow \epsilon^{n+k}_{X\times Y}$ given a solid $k$-framing $\sT_X\hookrightarrow \epsilon^k_X$ and solid $n$-framing $\sT_Y \hookrightarrow \epsilon^n_Y$.

\begin{definition}[Assuming Conjecture \ref{conj.one}]\label{def.geom.enrich}
For $\uno\in \cC$ a pointed $(\oo,n+k)$-category with adjoints, the pointed $(\oo,n)$-category $\Tang_{n\subset n+k}^{\fr}(\cC)$ is the composite of the cellular realization functor $\btheta_{n,\emptyset}^{\op}\hookrightarrow \Mfd_n^{\vfr}\ra \Mfd_n^{\sfr}$ with
\[
\Tang_{n\subset n+k}^{\fr}(\cC)\colon \Mfd_n^{\sfr}\xra{~\RR^k\times -~}\Mfd_{n+k}^{\sfr}
\xra{~\int\cC~}
\spaces~.
\]
\end{definition}
\begin{remark}\label{rem.geom.enrich}
For $0\leq i\leq n$, the space of $i$-morphisms of $\Tang_{n\subset n+k}^{\fr}(\cC)$ is
\[
\Tang_{n\subset n+k}^{\fr}(\cC)(c_i) := \int_{\RR^k\times \DD^i}\cC~.
\]
In particular, for $i=0$ we identify the space of objects
\[
\obj\Bigl(\Tang_{n\subset n+k}^{\fr}(\cC)\Bigr)=\Tang_{n\subset n+k}^{\fr}(\cC)(c_0) 
~\simeq~ 
\kEnd_\cC(\uno)
\]
where the second equivalence is from Lemma~\ref{lemma.corep}.
\end{remark}

We obtain the following enriched version of the tangle hypothesis.

\begin{theorem}\label{thm.tang.geom.enrich}
Assuming Conjecture \ref{conj.one}, for each pointed $(\infty,n+k)$-category $\uno\in\cC$ with adjoints, evaluation at the $k$-endomorphism $(\{0\}\subset\RR^k) \in \Tang_{n\subset n+k}^{\fr}$ defines an equivalence between pointed $(\oo,n)$-categories
\[
\Tang_{n\subset n+k}^{\fr}(\cC) \ \simeq \  \Omega^k\cC
\]
between the geometric enrichment of Definition~\ref{def.geom.enrich} and the $k$-fold loops of $\cC$ at the distinguished object $\uno\in \cC$.
\end{theorem}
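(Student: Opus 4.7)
The plan is to identify both sides as pointed flagged $(\oo,n)$-categories by comparing them cell-by-cell on $\btheta_{n,\emptyset}^{\op}$. I would begin by invoking Theorem~\ref{thm.flagged} to embed $\fCat_n^{\ast/}$ fully faithfully into pointed presheaves on $\btheta_{n,\emptyset}^{\op}$; it then suffices to produce a natural equivalence between the Segal sheaves underlying $\Omega^k\cC$ and $\Tang_{n\subset n+k}^{\fr}(\cC)$. Naturality in the cells will follow from functoriality of the product and factorization homology constructions involved, so the substantive point is to match the two values on each $c_i$.

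On the geometric side, Definition~\ref{def.geom.enrich} and Remark~\ref{rem.geom.enrich} directly give
\[
\Tang_{n\subset n+k}^{\fr}(\cC)(c_i) ~\simeq~ \int_{\RR^k\times \DD^i}\cC,
\]
where the factorization homology is that provided by Conjecture~\ref{conj.one} for $\cC \in (\Cat_{n+k}^{\adj})^{\ast/}$, evaluated on the product solidly $(n+k)$-framed manifold. On the loop side, I would apply Theorem~\ref{on.Rk} to $\Omega^k\cC \in \fCat_n^{\ast/}$ to obtain $(\Omega^k\cC)(c_i) \simeq \int_{\DD^i}\Omega^k\cC$, then invoke the product formula recorded in the remark preceding Definition~\ref{def.duals}, namely $\int_D\Omega^k\cC \simeq \int_{\RR^k\times D}\cC$ for $D\in \Mfd_n^{\vfr}$. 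The commuting square of Conjecture~\ref{conj.one} finally translates between the vari-framed and solidly framed versions of this right-hand side. With these identifications in place, both functors $\btheta_{n,\emptyset}^{\op}\to \Spaces$ arise as the single composition
\[
\btheta_{n,\emptyset}^{\op} \xra{~\lag-\rag~} \Mfd_n^{\vfr} \longrightarrow \Mfd_n^{\sfr} \xra{~\RR^k\times -~} \Mfd_{n+k}^{\sfr} \xra{~\int\cC~} \Spaces,
\]
yielding a canonical equivalence of Segal sheaves. The base-points match, selecting on the tangle side the empty tangle $\emptyset\subset \RR^k$ and on the loop side the identity $k$-endomorphism of $\uno$, via the identification of Lemma~\ref{lemma.corep}.

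The main obstacle is reconciling the two product formulas: the vari-framed identification $\int_D\Omega^k\cC \simeq \int_{\RR^k\times D}\cC$ supplied in the preliminaries, versus the solidly framed computation of $\int\cC$ used by $\Tang_{n\subset n+k}^{\fr}(\cC)$. This reduces to checking that the product functor $\Mfd_k^{\sfr}\times \Mfd_n^{\sfr} \to \Mfd_{n+k}^{\sfr}$ is compatible with the forgetful functor from vari-framings, so that the solid $(n+k)$-framing on $\RR^k\times \DD^i$ is the restriction of the product vari-framing. Granted this compatibility, the commuting square of Conjecture~\ref{conj.one} ensures that the two factorization homologies agree, closing the argument.
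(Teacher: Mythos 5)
Your proposal is correct and follows essentially the same route as the paper: the paper's (very terse) proof likewise identifies both sides, upon restriction to cells via $\btheta_{n,\emptyset}^{\op}\hookrightarrow \Mfd_n^{\vfr}\to\Mfd_n^{\sfr}$, with the single functor $D\mapsto \int_{\RR^k\times D}\cC$, using the product formula for $\Omega^k\cC$ from the remark on loopings together with Remark~\ref{rem.geom.enrich}. The vari-framed versus solidly framed reconciliation you flag as the main obstacle is exactly what the paper's phrase ``in the case that $\cC$ is assumed to have adjoints'' is invoking, namely the commutative square of Conjecture~\ref{conj.one}, so your more explicit treatment of that point matches the paper's intent.
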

\begin{proof}
From \S3.4, the pointed $(\oo,n)$-category $\Omega^k\cC$ is the restriction to $\btheta_n^{\op}$ of the functor on vari-framed stratified $n$-manifolds
\[
\Mfd_n^{\vfr}\xra{\Omega^k\cC}\spaces
~,\qquad
D \mapsto \int_{\RR^k\times D} \cC~.
\]
In the case that $\cC$ is assumed to have adjoints, this restriction is canonically identified with the functor on $\btheta_n^{\op}$ defined as $\Tang_{n\subset n+k}^{\fr}$, through Remark~\ref{rem.geom.enrich}.
\end{proof}

\subsection{The geometric enrichment of the cobordism hypothesis}

Let $\Alg^{\sf dual}_{\sf Com}(\Cat_{n}^{\adj})$ denote the full $\oo$-subcategory of symmetric monoidal $(\oo,n)$-categories with adjoints for which every object has a dual. By passing to limits from the construction for tangle categories, we define a parallel geometric enrichment:
\[
\xymatrix{
&&\Cat_{n}^{\ast/}\ar[dr]^-{\obj}\\
\Alg^{\sf dual}_{\sf Com}(\Cat_{n}^{\adj})\ar@{-->}[urr]^-{\Bord_{n}^{\fr}(-)}\ar[rrr]_-{\Map^{\ot}(\Bord_{n}^{\fr},-)}&&&\spaces~.}
\]
This is as follows.
\begin{definition}[Assuming Conjecture \ref{conj.one}]\label{def.geom.enrich.bord}
For $\fX$ a symmetric monoidal $(\oo,n)$-category with adjoints and duals, the pointed $(\oo,n)$-category
\[
\Bord_n^{\fr}(\fX) ~:=~ \underset{k\to \infty}\limit \Tang_{n\subset n+k}^{\fr}\bigl(\fB^k\fX\bigr)
\]
is the sequential limit of the tangle categories $\Tang_{n\subset n+k}^{\fr}(\fB^k\fX)$, where $\fB^k \fX$ is the $k$-fold deloop of $\fX$.
\end{definition}

We have the following geometrically enriched version of the cobordism hypothesis.

\begin{theorem}
Let $\fX$ be a symmetric monoidal $(\infty,n)$-category with adjoints and duals.
Assuming Conjecture \ref{conj.one}, evaluation on $\ast\in \Bord_n^{\fr}$ determines an equivalence between $(\oo,n)$-categories
\[
\ev_\ast\colon \Bord_n^{\fr}(\fX)\xra{~\simeq~}\fX
\]
from the geometric enrichment of $\fX$-valued symmetric monoidal functors of Definition~\ref{def.geom.enrich.bord} and $\fX$ itself.
\end{theorem}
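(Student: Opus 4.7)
The plan is to mirror the proof of Theorem~\ref{B-bord} in the enriched setting, substituting the geometrically enriched tangle hypothesis (Theorem~\ref{thm.tang.geom.enrich}) for Corollary~\ref{cor.ek.tang}, and then collapsing the resulting sequential limit via the colocalization $\fB^k \dashv \Omega^k$ of~(\ref{B.Omega}). Since $\fX$ is symmetric monoidal with duals and adjoints, for every $k \geq 0$ the $k$-fold deloop $\fB^k \fX$ is a pointed $(\infty,n+k)$-category with adjoints, so Theorem~\ref{thm.tang.geom.enrich} applies to give an equivalence of pointed $(\infty,n)$-categories
\[
\Tang_{n \subset n+k}^{\fr}(\fB^k \fX) \simeq \Omega^k \fB^k \fX.
\]
Full faithfulness of $\fB^k$ in the colocalization~(\ref{B.Omega}) then identifies the right-hand side with $\fX$ via the unit equivalence $\fX \xra{\simeq} \Omega^k \fB^k \fX$, as $\cE_k$-monoidal $(\infty,n)$-categories and hence on underlying $(\infty,n)$-categories.

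Assembling these equivalences, the computation will read
\[
\Bord_n^{\fr}(\fX) ~:=~ \underset{k \to \infty}\limit \Tang_{n \subset n+k}^{\fr}(\fB^k \fX) ~\simeq~ \underset{k \to \infty}\limit \Omega^k \fB^k \fX ~\simeq~ \underset{k \to \infty}\limit \fX ~\simeq~ \fX,
\]
where the final equivalence uses that the limit of a constant sequential diagram recovers the constant value. Tracing the distinguished $k$-endomorphism $(\{0\} \subset \RR^k) \in \Tang_{n \subset n+k}^{\fr}$ through each step, via Lemma~\ref{lemma.corep} and the $\fB^k \dashv \Omega^k$ counit, identifies this composite equivalence with the evaluation functor $\ev_\ast$, since the unit object of $\fB^k \fX$ corresponds to the unit of $\fX$ under $\Omega^k \fB^k \fX \simeq \fX$.

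The main obstacle is verifying that the transition maps in the tower $(\Omega^k \fB^k \fX)_{k}$ correspond coherently to identity equivalences on $\fX$, so that the sequential limit indeed collapses. Theorem~\ref{thm.tang.geom.enrich} gives an equivalence at each level individually, but the structure maps of the section $k \mapsto \Tang_{n \subset n+k}^{\fr}(\fB^k \fX)$ are induced geometrically by the suspension functor $\RR \times -$ on solidly framed manifolds (as in~(\ref{6})) together with the deloop $\fB^{k+1}\fX \simeq \fB\fB^k\fX$. Showing that, under the corepresentability of Lemma~\ref{lemma.corep}, these structure maps correspond to the adjunction-compatible identity maps on $\fX$ is essentially a naturality check combining the corepresentability isomorphism $\Tang_{n\subset n+k}^{\fr} \simeq \Mfd_{n+k}^{\sfr}(\RR^k, -)$ with the coherence of the family $(\fB^k \dashv \Omega^k)_k$ across varying $k$. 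This is bookkeeping rather than a new conceptual input, but is the one place where genuine compatibility across dimensions must be established.
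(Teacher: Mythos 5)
Your proof is correct and follows essentially the same route as the paper's: unwind $\Bord_n^{\fr}(\fX)$ as the sequential limit of $\Tang_{n\subset n+k}^{\fr}(\fB^k\fX)$, apply Theorem~\ref{thm.tang.geom.enrich} levelwise to identify each term with $\Omega^k\fB^k\fX\simeq\fX$, and collapse the constant sequential limit. The coherence-of-transition-maps point you flag at the end is a real subtlety that the paper's very terse proof leaves implicit; your observation that it reduces to naturality of corepresentability across $\RR\times -$ together with coherence of the $(\fB^k\dashv\Omega^k)_k$ family is the right way to discharge it.
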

\begin{proof}
As in the proof of Theorem~\ref{B-bord}, we use the definition of $\Bord_n^{\fr}(\fX)$ as a sequential limit of tangle categories.
We then apply the form of the tangle hypothesis given by Theorem~\ref{thm.tang.geom.enrich}:
\begin{eqnarray}
\nonumber
\Bord_{n}^{\fr}(\fX)
&
\simeq
&
\underset{k\to \infty}\limit \Tang_{n\subset n+k}^{\fr}\bigl(\fB^k\fX\bigr)\\
\nonumber
&
\underset{\rm Thm~\ref{thm.tang.geom.enrich}}\simeq
&
\underset{k\to \infty}\limit \Omega^k\fB^k \fX
\\
\nonumber
&
\simeq
&
\underset{k\to \infty}\limit \fX\simeq \fX~.
\end{eqnarray}
\end{proof}

\end{document}